\def\mB{\mathcal{B}}
\def\mN{\mathcal{N}}
\def\mR{\mathfrak{R}}
\def\mT{\mathcal{T}}
\def\mJ{\mathcal{J}}
\def\mV{\mathcal{V}}
\def\K{\mathrm{Kl}_2}
\def\r){\right)}
\def\l({\left(}
\def\B{\Bigg}
\def\d{\mathrm{d}}
\newcommand\bbm[1]{\mathbb{#1}}
\def\ssum{\mathop{\sum\nolimits^*}}
\def\ppmod{\!\!\!\!\!\pmod}
\def\ssqrt{\!\sqrt}
\let\ve=\varepsilon
\let\ol=\overline
\let\vp=\varphi
\theoremstyle{definition}
\newtheorem{remark}{Remark}[section]
\theoremstyle{plain}
\newtheorem{theorem}{Theorem}
\newtheorem{lemma}[theorem]{Lemma}
\numberwithin{equation}{section}
\numberwithin{theorem}{section}
\newcommand\ord{\mathop{\mathrm{ord}}\nolimits}
\begin{document}
\title{Bilinear forms with Kloosterman sums\\ and moments of twisted $L$-functions}

\author{Djordje Mili\'{c}evi\'{c}}
\address{D. Mili\'{c}evi\'{c}: Bryn Mawr College, Department of Mathematics, 101 North Merion Avenue, Bryn Mawr, PA 19010, USA}
\email{dmilicevic@brynmawr.edu}

\author[X. Qin] {Xinhua Qin}
\address{X. Qin: School of Mathematics, Hefei University of Technology, Hefei 230009, P.R. China}
\email{qinxh@mail.hfut.edu.cn}

\author[X. Wu]{Xiaosheng Wu}
\address{X. Wu: School of Mathematics, Hefei University of Technology, Hefei 230009, P.R. China}
\email{xswu@amss.ac.cn}

\thanks{D. Mili\'cevi\'c supported in part by the Simons Foundation Award MPS-TSM-00008085 and by the Charles Simonyi Endowment. X. Qin and X. Wu supported in part by the NSFC Grant 12271135, Anhui Provincial Natural Science Foundation Grant 2508085J005 and the Fundamental Research Funds for the Central Universities Grant JZ2025HGTG0254}

\keywords{Kloosterman sum, bilinear form, asymptotic formula, $L$-functions, character twists, sums of products}
\subjclass[2010]{Primary 11L07, Secondary 11D79, 11F66, 11L05, 11T23}

\begin{abstract}
We establish power-saving estimates for general bilinear forms with Kloosterman sums modulo arbitrary $q$, including when both variables are shorter than the P\'olya--Vinogradov range.
As an application, we obtain power-saving asymptotics for the second moment of (holomorphic or Maa\ss) modular $L$-functions twisted with Dirichlet characters to an arbitrary large admissible modulus $q$. The bounds obtained are independent of the Ramanujan--Petersson conjecture and remove all factorability conditions on $q$ in the work of Blomer, Fouvry, Kowalski, Michel, Mili\'cevi\'c, and Sawin.
\end{abstract}

\maketitle

\section{Introduction}

\subsection{Bilinear forms with Kloosterman sums}
Among the indispensable tools of analytic number theory are estimates on \emph{bilinear forms} of type
\begin{equation}
\label{bilinear-general-eq}
\sum_m\sum_n\alpha_m\beta_nK(mn),
\end{equation}
where $K:\mathbb{N}\to\mathbb{C}$ is a suitable arithmetic kernel depending on the particular situation and $\bm{\alpha}=(\alpha_m)$ and $\bm{\beta}=(\beta_n)$ are finitely supported sequences of coefficients on which available information is such that they need to be treated as essentially arbitrary beyond the size of $\|\bm{\alpha}\|_2=(\sum_m|\alpha_m|^2)^{1/2}$ and $\|\bm{\beta}\|_2=(\sum_n|\beta_n|^2)^{1/2}$. Of particular importance is the situation where $K:\mathbb{Z}/q\mathbb{Z}\to\mathbb{C}$ is a periodic kernel, often itself a ``trace function'' or otherwise a complete exponential or character sum modulo $q$, and coefficients are supported on intervals $m\leq M$, $n\leq N$ with $M$, $N$ as small as possible compared to the modulus $q$.
A major threshold for the lengths of the intervals is the P\'olya--Vinogradov range $M,N\sim q^{\frac12+\ve}$; for $M$ and $N$ in shorter ranges, the completion method through the Fourier transform does not make the sums shorter.

Following approaches such as those we discuss in \S\ref{sketch-sec}, at the heart of the proofs of such estimates are bounds on complete \emph{sums of products} (such as our \eqref{eqdefS}), which are highly sensitive to the arithmetic structure of the modulus $q$. We refer the reader to \cite{KMS2017,FKM2015} for compherensive surveys and several foundational results.

When studying automorphic forms and $L$-functions, the kernel $K(n)=\K(n;q)$  given by the normalized Kloosterman sum
\[
\K(a;q)=\frac1{\sqrt{q}}\ssum_{x\ppmod{q}}e_q(ax+\ol{x})
\]
often arises naturally through applications of the classical Kuznetsov, Petersson, and Voronoi formulas.
Power-saving bounds breaking the P\'olya--Vinogradov threshold on the bilinear form \eqref{bilinear-general-eq} with these Kloosterman sums as the kernel played a critical role in the proofs of power-saving asymptotics for the second moment of modular $L$-functions in the series of papers by Blomer, Fouvry, Kowalski, Michel, Mili\'cevi\'c, and Sawin~\cite{BM2015,BFK+17a,KMS2017,BFKMMS}, which treated \eqref{bilinear-general-eq} in critical ranges by disjoint methods, with \cite{BM2015} covering the case of suitably factorable $q$ by Weyl differencing and $p$-adic methods and \cite{KMS2017} covering prime $q$ by deep algebro-geometric methods including the machinery of trace functions. From the perspective of the arithmetic structure of the modulus $q$, these can be thought of as extreme cases of prime and well-factorable (including possibly powerful) $q$, leaving unaddressed a number of intermediate regimes such as $q=q_1q_2$ and $q=q_1^2$ (with $q_j$ prime and $q_1\asymp q_2$) that do not neatly follow either of the two paradigms. For applications, one is naturally interested in results that apply to all moduli $q$ uniformly, and our first result establishes such a bound on \eqref{bilinear-general-eq}.

\begin{theorem}\label{thmmain}
Let $q$ be a positive integer, $M,N\ge1$, and let $\bm{\alpha}=(\alpha_m)$, $\bm{\beta}=(\beta_n)$ be two sequences supported respectively on $[1,M]$ and $[1,N]$. If the conditions
\begin{align}\label{conditionMN}
1\le M\le Nq^{\frac14}, \quad M^{\frac{7}{5}}N<q^{\frac32}, \quad MN\le q^{\frac 54}
\end{align}
are satisfied, then for any integer $c$ coprime with $q$,
we have
\begin{equation}
\label{eqmain}
\begin{aligned}
&\sum_{m\le M}\sum_{n\le N} \alpha_m \beta_n \K(cmn;q)\\
&\qquad\ll q^\ve\|\bm{\alpha}\|_2\|\bm{\beta}\|_2(MN)^{\frac12}
\l(M^{-\frac12}q^{\frac1{6}}+M^{-\frac3{25}}N^{-\frac3{10}}q^{\frac15}+(MN)^{-\frac3{16}}q^{\frac{11}{64}}\r).
\end{aligned}
\end{equation}
\end{theorem}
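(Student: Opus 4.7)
The plan is to reduce the bilinear form in \eqref{eqmain} to estimates on complete \emph{sums of products} of Kloosterman sums modulo $q$, and then invest in uniform bounds for these complete sums across the full range of multiplicative structures of $q$. Following the general approach sketched in \S\ref{sketch-sec}, I would first smooth and dyadically localize, then apply Cauchy--Schwarz in the $m$-variable to bound the square of the left-hand side by $\|\bm{\alpha}\|_2^2\, M$ times
\[
T = \sum_{m \le M}\Big|\sum_{n \le N}\beta_n \K(cmn;q)\Big|^2.
\]
Expanding the square, the diagonal $n_1 = n_2$ contributes $O(\|\bm{\beta}\|_2^2\, M\, q^{\ve})$ via the Weil bound $|\K(a;q)| \ll q^{\ve}$, while the off-diagonal reduces to sums of the shape $\sum_{n_1 \ne n_2}\beta_{n_1}\overline{\beta_{n_2}}\sum_{m \le M}\K(cmn_1;q)\overline{\K(cmn_2;q)}$.

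Since $m$ traverses an interval much shorter than the modulus $q$, the next step is to complete the sum over $m$ by Poisson summation, reducing matters, up to a logarithmic loss, to sharp uniform estimates on complete sums of products of the shape
\[
\mathcal{S}(n_1, n_2, k; q) := \sum_{x \bmod q} \K(cxn_1;q)\overline{\K(cxn_2;q)}\, e_q(kx).
\]
For the contribution $(MN)^{-3/16}q^{11/64}$, whose $q$-exponent is of Burgess type, I would apply an additional Cauchy--Schwarz in $n$ (a $q$-analogue of the van der Corput/Burgess device), converting the outer problem into one on higher correlation sums which, on the squarefree part of $q$, are controlled by Deligne-type bounds in the spirit of \cite{KMS2017}. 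The remaining terms $M^{-1/2}q^{1/6}$ and $M^{-3/25}N^{-3/10}q^{1/5}$ should correspond respectively to a single and to an iterated $q$-analogue Weyl differencing scheme on the squarefull part of $q$, executable $p$-adically via stationary phase as in \cite{BM2015}. The three summands in \eqref{eqmain} would then emerge from optimizing across these three regimes, with the hypotheses \eqref{conditionMN} precisely ensuring that the errors from completion and from the balancing steps remain manageable.

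The central obstacle, and what distinguishes this theorem from its predecessors, is the uniform treatment of arbitrary moduli: writing $q = q_\flat q_\sharp$ with $q_\flat$ squarefree and $q_\sharp$ squarefull, one must combine through the Chinese Remainder Theorem the algebro-geometric trace-function bounds of \cite{KMS2017} on the $q_\flat$-factor with the $p$-adic stationary-phase bounds of \cite{BM2015} on the $q_\sharp$-factor, \emph{without losing at the interface} precisely when $q_\flat$ and $q_\sharp$ are of comparable size (as in $q = q_1 q_2$ or $q = q_1^2$ with $q_1 \asymp q_2$ prime, the regimes conspicuously left open by those earlier works). The hardest technical part is therefore likely the simultaneous geometric and $p$-adic analysis of the complete sum $\mathcal{S}(n_1,n_2,k;q)$ across mixed multiplicative types, including the identification of the singular loci where square-root cancellation degenerates and the corresponding exceptional set must be handled by hand. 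Once such uniform bounds for $\mathcal{S}$ are in place, the proof is assembled by a routine optimization and taking square roots through the first Cauchy--Schwarz to recover \eqref{eqmain}.
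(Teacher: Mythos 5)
Your opening moves (smoothing, Cauchy--Schwarz to isolate the short variable, expanding into diagonal plus off-diagonal, reducing the off-diagonal to complete sums) match the paper's framework, but the structural decomposition you propose for $q$ is not the one that makes the argument close, and it would leave open precisely the regime that is new here. The paper does not split $q=q_\flat q_\sharp$ into squarefree and squarefull parts; it writes $q=q^\star\rho$ with $\rho$ the $q^{1/3}$-smooth part and $q^\star$ the product of the (at most two) prime factors exceeding $q^{1/3}$, and then runs two different mechanisms depending on whether $\rho$ is small or large relative to a balancing threshold $\rho_0$. This matters because the genuinely hard uncovered case is $q^\star=p^2$ (or $q^\star=p_1p_2$, $p_1\asymp p_2$) with $p$ near $q^{1/2}$: your squarefull-part handling defers to the $p$-adic stationary-phase/Weyl-differencing technology of \cite{BM2015}, but that method requires a divisor $s\mid q$ with $q^{\delta_1}\le s\le q^{1/2-\delta_2}$, and for $q=p^2$ the only candidate $s=p$ sits exactly at the edge where that method fails. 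The CRT ``interface'' you flag as the hard part is in fact handled routinely by the multiplicativity lemmas (Lemmas~\ref{lemmMR} and~\ref{lemmS}); the actual new difficulty is bounding the complete sum $\mathfrak{S}(\bm{b},\bm{h},1;p^2)$, which the paper does by $p$-adic stationary phase reducing to a count of rational points on a generically zero-dimensional variety $\mathcal{V}_{11}\subset\mathbb{F}_p^{11}$ (Theorem~\ref{lemBS2} and Lemma~\ref{thmK}), together with the identification of the singular locus $\mathcal{V}_4^{bad}\times\mathcal{V}_2^{bad}(\bm{b})$. None of this is present in your sketch.

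There is also a second, more mechanical gap: you propose to Poisson-complete the short sum directly and arrive at two-fold correlations $\mathcal{S}(n_1,n_2,k;q)=\sum_x \K(cxn_1;q)\overline{\K(cxn_2;q)}e_q(kx)$, and then iterate with a Burgess-type Cauchy--Schwarz. Direct completion at this stage does not save enough to break the P\'olya--Vinogradov threshold for $M,N\sim q^{1/2}$. The paper instead keeps the long variable $n$ on the outside after Cauchy--Schwarz, applies Vinogradov's ``shifted by $ab$'' substitution to the $n$-sum \emph{before} completing, introduces $4$-tuples of shifts $\bm{b}$, applies H\"older in $\bm{b}$, and only then completes (via Poisson in the $l_i$-variables and an $r$-sum) to arrive at sums $\mathfrak{S}(\bm{b},\bm{h},d;q)$ of \emph{products of eight} Kloosterman sums twisted by two additive characters \eqref{eqdefS}, which it bounds to square-root cancellation via the KMS trace-function machinery for $p$, and via the new variety count for $p^2$. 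Your two-fold correlation sums are not the right object, and the order of operations (shift then complete, not complete then iterate) is essential. Finally, your attribution of the three summands in \eqref{eqmain} to single/iterated Weyl differencing and a Burgess device is only loosely right: the exponent $11/64$ arises from the H\"older-in-$\bm{b}$ optimization in Theorem~\ref{thmBK}, $q^{1/6}$ from Theorem~\ref{thmBKs} with $s\sim q^{1/3}$, and the middle term $M^{-3/25}N^{-3/10}q^{1/5}$ comes from balancing the two theorems at $\rho=\rho_0$, not from a separate iteration.
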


\begin{remark}
The principal novelty in Theorem~\ref{thmmain} over \cite{BM2015,KMS2017} is that it holds uniformly over all moduli $q$. As a baseline, applying Weil's bound directly gives the trivial estimate $\|\bm{\alpha}\|_2\|\bm{\beta}\|_2(MN)^{\frac12}$ for the bilinear form. A bit more refined analysis, following the P\'olya--Vinogradov method, yields the improved bound (see \cite[Theorem 1.17]{FKM14})
\begin{equation}
\label{eqPoly}
\begin{aligned}
\sum_{m\le M}\sum_{n\le N} \alpha_m \beta_n \K(cmn;q)\ll \|\bm{\alpha}\|_2\|\bm{\beta}\|_2(MN)^{\frac12}
\l(q^{-\frac1{4}}+M^{-\frac1{2}}+N^{-\frac1{2}}q^{\frac{1}{4}} \log q\r),
\end{aligned}
\end{equation}
which requires at least $N\ggg q^{\frac12}\log^2q$ to improve upon the trivial bound. To contrast, in the balanced range $M\sim N$, which is the most important one in many applications including our Theorem~\ref{thmL}, the bound \eqref{eqmain} is nontrivial already for $M\sim N\gg q^{\frac{10}{21}+\delta}$  (and saves $q^{-\frac1{100}+\ve}$ in the P\'olya--Vinogradov range $M,N\sim q^{1/2+\ve}$).

We point out that Pascadi~\cite{Pascadi2025} has simultaneously and independently obtained power-saving bounds on bilinear forms in Kloosterman sums and asymptotics for moments of twisted $L$-functions similar to our Theorems \ref{thmmain} and \ref{thmL}. The two papers are complementary, and the two substantially different methods perform slightly better in different ranges and for different types of moduli, both pleasingly achieving power savings for bilinear sums of square-root length (and substantially beyond) and general moduli. The methods of the present paper (which uses algebraic geometry and builds on Kowalski--Michel--Sawin~\cite{KMS2017} and Blomer--Mili\'cevi\'c~\cite{BM2015}) perform better for general moduli $q$ and remove the dependence on the Ramanujan--Petersson conjecture in Theorem \ref{thmL}, while the methods of \cite{Pascadi2025} (which uses non-abelian Fourier analysis and builds on Shkredov \cite {Shkredov2018,Shkredov2021}) perform better and in longer ranges for specific classes of moduli and can handle more general ranges of variables.
\end{remark}

\subsection{Moments of twisted $L$-functions}
Asymptotics for moments with a power-saving error term are crucial to the amplification and related analytic techniques in questions such as subconvexity, nonvanishing, and extreme values of $L$-functions; see \cite{BFKMMS} for a rich sample of applications such as these and their implications for distribution of analytic ranks, modular symbols, and more.

For $f$ a holomorphic or Maa{\ss} cuspidal newform of level 1, a natural family arises when considering the central values $L(1/2,f\otimes\chi)$
twisted by all primitive characters to a large modulus $q$, which may be seen as the finite place analogue of the family of archimedean twists $L(1/2+it,f)$ along the critical line. Following a breakthrough power-saving asymptotic of Young \cite{You11} for the fourth moment $L(1/2,\chi)^4$ (the Eisenstein series analogue), power-saving asymptotics for the twisted second moment (such as \eqref{eqmoment} below) for $f_j$ holomorphic cusp forms (or Maa{\ss} cusp forms satisfying the Ramanujan--Petersson conjecture) were obtained via bounds on bilinear forms with Kloosterman sums such as our Theorem~\ref{thmmain} in the cases of suitably factorable $q$~\cite{BM2015} and prime $q$~\cite{KMS2017}. In addition to these significant factorability conditions, the applicability of these results was also hampered by the dependence on the Ramanujan--Petersson conjecture as well as the technical condition that $q$ not be highly divisible by prime 2.

As an important application of Theorem~\ref{thmmain} we obtain the following evaluation of the twisted second moment, which is free of any dependence on the Ramanujan--Petersson conjecture and any factorability conditions on the modulus $q$. Let $\vp^*(q)$ denote the number of primitive Dirichlet characters modulo $q$; such characters exist iff $q\not\equiv 2\pmod 4$, in which case we term $q$ admissible and we have $\vp^*(q)=q^{1+o(1)}$.
As in \cite{BM2015}, the leading constants in the asymptotic evaluation are expressed in terms of the finite Euler products
\begin{equation}
\label{euler-products-eq}
\begin{gathered}
P(s)=\prod_{p\mid q}\l(1-\frac{\lambda_1(p^2)}{p^s}+\frac{\lambda_1(p^2)}{p^{2s}}-\frac{1}{p^{3s}}\r)\l(1-\frac{1}{p^{2s}}\r)^{-1},\\
Q(s)=\prod_{p\mid q}\l(1-\frac{\lambda_1(p)\lambda_2(p)}{p^s}+\frac{\lambda_1(p^2)+\lambda_2(p^2)}{p^{2s}}-\frac{\lambda_1(p)\lambda_2(p)}{p^{3s}}+\frac1{p^{4s}}\r) \l(1-\frac{1}{p^{2s}}\r)^{-1}.
\end{gathered}
\end{equation}

Our evaluation of the twisted second moment over arbitrary admissible moduli $q$ is then as follows.

\begin{theorem}\label{thmL}
For $j=1,2$, let $f_j$ be both holomorphic or both Maa{\ss} cuspidal newforms of level $1$ with Hecke eigenvalues $\lambda_j(n)$.
If their root numbers satisfy $\ve(f_1)\ve(f_2)=1$, then
\begin{align}\label{eqmoment}
\frac1{\vp^*(q)}\ssum_{\chi \ppmod q}L(1/2,f_1\otimes\chi)\ol{L(1/2,f_2\otimes\chi)}=\frac2{\zeta(2)}M(f_1,f_2,q)+O\l(q^{-\frac1{216}+\ve}\r),
\end{align}
where
\begin{align}
M(f_1,f_2,q)=\begin{cases}
P(1)L(1,{\rm{sym}^2}f_1)\l(\log q+c+\frac{P'(1)}{P(1)}\r), &f_1=f_2,\\
Q(1)L(1,f_1\times f_2), & f_1\neq f_2,
\end{cases}
\end{align}
where $P(s)$, $Q(s)$ are finite Euler products shown in \eqref{euler-products-eq}, and $c$ is a constant depending only on $f_1$ (not on $q$).
\end{theorem}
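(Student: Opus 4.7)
The plan is to follow the second-moment framework of \cite{BM2015,KMS2017}, with two key modifications: (i) use Theorem~\ref{thmmain} in place of the moduli-specific bilinear inputs of those works, so as to handle arbitrary admissible $q$, and (ii) feed the bilinear estimate with only $L^2$ information on the Hecke eigenvalues, so as to sidestep the Ramanujan--Petersson conjecture.

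First I would set up the moment via approximate functional equations. Applying the AFE to $L(1/2,f_1\otimes\chi)$ and $\ol{L(1/2,f_2\otimes\chi)}$ and expanding the product expresses the left side of \eqref{eqmoment} as a weighted sum of $\lambda_1(m)\ol{\lambda_2(n)}(mn)^{-1/2}$ over $m,n\le q^{1+\ve}$, after averaging over primitive $\chi\pmod q$ by M\"obius inversion and orthogonality and restricting to $(mn,q)=1$. This orthogonality produces a divisor sum over $d\mid q$ with constraint $m\equiv\pm n\pmod d$. The diagonal contribution $m=n$ gives, after standard Rankin--Selberg and symmetric-square Dirichlet series manipulations, exactly the claimed $\frac{2}{\zeta(2)}M(f_1,f_2,q)$: the Euler products $P$, $Q$ of \eqref{euler-products-eq} arise from local computation at primes $p\mid q$, the factor $2$ from the coherent combination of the four terms of the two AFEs enabled by $\ve(f_1)\ve(f_2)=1$, and (when $f_1=f_2$) the term $\log q+c+P'(1)/P(1)$ is the constant in the Laurent expansion of the relevant symmetric-square $L$-function.

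The heart of the matter is the off-diagonal error, i.e.\ $m\ne n$ with $m\equiv\pm n\pmod d$, $d\mid q$, the case $d=q$ being most critical. Detecting the congruence by additive characters and applying Voronoi summation to the $n$-variable replaces the additive twist $e_d(\pm\ol m\, n)$ by its dual, introducing the kernel $\K(\cdot;d)$. After smooth dyadic decomposition, each resulting piece takes the shape
\begin{equation*}
\sum_{m\sim M}\sum_{n\sim N}\alpha_m\beta_n\K(cmn;d),\qquad \alpha_m=\lambda_1(m)W_1(m),\ \beta_n=\ol{\lambda_2(n)}W_2(n),
\end{equation*}
with $(c,d)=1$ and, in the critical regime $d=q$, $MN\asymp q^{1+\ve}$, $M\asymp N\asymp q^{1/2}$. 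Theorem~\ref{thmmain} applies precisely here, and its $L^2$ formulation combined with the Rankin--Selberg bound $\sum_{n\le N}|\lambda_j(n)|^2\ll_f N$ yields $\|\bm\alpha\|_2\ll M^{1/2}q^\ve$ and $\|\bm\beta\|_2\ll N^{1/2}q^\ve$ \emph{unconditionally}. Optimizing the three error terms of \eqref{eqmain} against the dyadic ranges and summing over $d\mid q$ produces the savings $q^{-1/216+\ve}$ in \eqref{eqmoment}, with $-1/216$ reflecting the weakest of the three terms in \eqref{eqmain} at $M\asymp N\asymp q^{1/2}$.

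The main obstacle, in my estimation, is not the bilinear step but the uniform-in-$q$ execution of the preceding reductions for highly ramified moduli such as $q=p^k$ with large $k$: the Voronoi dual, the analysis of Gauss-sum-type local factors, and the main-term Euler factors at primes $p\mid q$ must all be carried out prime-by-prime without the simplifications available under the factorability hypothesis of \cite{BM2015} or the primality hypothesis of \cite{KMS2017}, and with no residual assumption on $\gcd(q,2)$. Once this uniform local analysis is in hand, the $L^2$-nature of Theorem~\ref{thmmain} makes the removal of the Ramanujan--Petersson conjecture essentially automatic, completing the plan.
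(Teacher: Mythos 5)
Your proposal correctly identifies the overall architecture (approximate functional equation, orthogonality of primitive characters with the $d\mid q$ divisor sum, diagonal $\to$ main term with Euler factors $P$, $Q$, then Voronoi + Theorem~\ref{thmmain} on the off-diagonal), and your treatment of the diagonal and the norms $\|\bm\alpha\|_2,\|\bm\beta\|_2$ via Rankin--Selberg is fine. But there is a genuine gap: the Voronoi-plus-Theorem~\ref{thmmain} route cannot cover the entire off-diagonal. When $M$ and $N$ are \emph{both} near $q$ (the balanced regime, which is present in the AFE since $mn\ll q^{2+\ve}$), applying Voronoi to the $n$-sum produces a dual length $N^*\asymp q^2/N\asymp q$, so the bilinear form has $MN^*\asymp q^2$, violating the hypothesis $MN\leq q^{5/4}$ of Theorem~\ref{thmmain} outright. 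The paper handles this balanced regime by an entirely separate input, Lemma~\ref{lemmaBmn}, whose proof goes through shifted convolution sums (bounds on $\mathcal{D}(\ell_1,\ell_2,h,N,M)$ and $\mathcal{S}(\ell_1,\ell_2,d,N,M)$ in the style of \cite[Proposition 8]{BM2015}), not through Theorem~\ref{thmmain} at all. Your plan omits this entirely, so as written it does not close.

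This omission also undermines your concluding claim that the removal of the Ramanujan--Petersson conjecture is ``essentially automatic'' from the $L^2$ nature of Theorem~\ref{thmmain}. In fact, the RPC dependence in \cite{BM2015} enters precisely through the shifted-convolution/balanced analysis, and a substantial part of the paper's Section on the moment is devoted to reworking \cite[(3.10)--(3.12)]{BM2015}: inserting the exponent $\theta$ carefully via \eqref{toward-RPC}, retaining the $(\ell_1\ell_2)^{1/4}$ factor through \eqref{eqpro8}, exploiting the relation \eqref{eqDdelta} to sharpen the individual bound \eqref{Blo04-bound}, and verifying that the extra $f_j^{\theta}$ factors cancel against coprimality of $f_1',f_2'$ and the sum over $d\mid q$. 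None of this is a formal consequence of using $L^2$ norms; it is a separate, technically delicate refinement. If you want the plan to produce the stated $q^{-1/216+\ve}$ error term, you must add: (i) the balanced-range lemma via shifted convolutions, re-proved without RPC, and (ii) the case split $v-u\leq 1-4\eta$ versus $v-u>1-4\eta$ that decides which of the two inputs to apply, together with the Weil-bound pruning of divisors $r\mid q$ that do not satisfy $r\ge q^{1-6\eta/(3-2\theta)}$ so that \eqref{conditionMN} is met before invoking Theorem~\ref{thmmain}.
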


The main terms in Theorem~\ref{thmL} feature leading coefficients $L(1,\mathrm{sym}^2f_1)$ and $L(1,f_1\times f_2)$, which do not vanish by the lower bounds of Hoffstein--Lockhart~\cite{HL94}, Ramakrishnan--Wang~\cite{RW03}, and Brumley~\cite{Bru06}. In light of $P(s)=(L_q(s,\mathop{\mathrm{sym}}^2f_1)/\zeta_q(2s))^{-1}$ and $Q(s)=(L_q(s,f_1\times f_2)/\zeta_q(2s))^{-1}$, we have $P(1),Q(1)=\exp(O_{\ve}((\log\log q)^{\ve}))$
and $P'(1)/P(1)=O_{\ve}((\log\log q)^{1+\ve})$, so that the main terms in Theorem~\ref{thmL} are not far from a linear polynomial in $\log q$ or a constant depending on $f_1$ and $f_2$; see Remark~\ref{PQ-remark}.

\subsection{Notation}
We follow the standard convention where $\ve$ denotes an arbitrarily small positive constant that may vary at each occurrence. As is common, we write $e(z)=e^{2\pi i z}$; we denote $f=O(g)$ or $f\ll g$ to indicate that $|f|\leqslant Cg$ for some constant $C>0$ which is allowed to depend on $\ve$ (with $\ve$ allowed to vary from line to line as explained above)
and (in bounds pertaining to Theorem~\ref{thmL} and its proof) on the fixed cusp forms $f_1$ and $f_2$,
 but is otherwise absolute/independent from all other parameters unless specifically indicated by a subscript, and we write $f\asymp g$ to mean that $f\ll g$ and $g\ll f$ both hold.
For $d, q\in \mathbb{N}$, we denote by $q_d$ the maximal factor of $q$ such that $(q_d,d)=1$.

\section{Sketching the treatment for bilinear forms}

Theorem \ref{thmmain} is proved by combining (and then optimizing) the results of Theorem \ref{thmBKs} and Theorem \ref{thmBK}, which provide power-saving estimates on the bilinear forms with Kloosterman sums according to the factorization of the modulus $q$. In \S\ref{statements-subsec}, we state these two theorems; then, in \S\ref{thmmain-subsec}, we show how they combine to prove Theorem~\ref{thmmain}, and in \S\ref{sketch-sec}, we give a general overview of their proofs, which then constitute most of the rest of the paper (sections~\ref{secwfm}--\ref{variety-sec}).

\subsection{Estimates on bilinear forms in the factorable and rough regimes}
\label{statements-subsec}
In this section, we state Theorems~\ref{thmBKs} and \ref{thmBK}.

When $q$ admits a favorable factorization such that there exists a divisor $s\mid q$ satisfying $s\in [q^{\delta_1}, q^{\frac12-\delta_2}]$ with positive constants $\delta_1,\delta_2$ not excessively small, we establish the estimate for the bilinear form via the following theorem.
\begin{theorem}\label{thmBKs}
Let $M, N\ge1$ and $s, q\in\mathbb{N}$ with $s\mid q$. Given two sequences $\bm{\alpha}=(\alpha_m)$ and $\bm{\beta}=(\beta_n)$ supported respectively on $[M, 2M]$ and $[N, 2N]$, then for any integer $c$ coprime with $q$, it holds that
\begin{equation}
\label{eqBKs}
\begin{aligned}
&\sum_{M\le m\le 2M}\sum_{N\le n\le 2N} \alpha_m \beta_n \K(cmn;q)\\
&\qquad\ll q^\ve\|\bm{\alpha}\|_2\|\bm{\beta}\|_2(MN)^{\frac12}
\l(M^{-\frac12}s^{\frac12}+q^{-\frac14}s^{\frac14}+N^{-\frac12}q^{\frac14}s^{-\frac14}\r).
\end{aligned}
\end{equation}
\end{theorem}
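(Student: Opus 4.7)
The plan is to exploit the factorization $q=s\cdot r$ (with $r:=q/s$) and the twisted multiplicativity
\[
\K(cmn;q)=\K(cmn\ol{r}^2;s)\cdot\K(cmn\ol{s}^2;r),
\]
which separates a short-modulus factor depending only on $(m\bmod s, n)$ from a long-modulus factor amenable to Deligne-style complete-sum bounds modulo $r$. The case $(s,r)>1$ is first reduced to a coprime decomposition via the refined multiplicativity of $\K$ on prime-power parts, as in \cite{BM2015}.

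I partition $m=m_0+ts$ with $m_0\in\{0,\ldots,s-1\}$ and $t\asymp M/s$, so that the $s$-factor $\K(cm_0n\ol{r}^2;s)$ depends only on $(m_0,n)$. Cauchy--Schwarz in $(m_0,n)$ strips off this (bounded) $s$-factor and reduces the estimate to
\[
|S|^2\ll\|\bm{\beta}\|_2^2\cdot s\cdot\sum_{m_0\pmod s}\sum_{n\le N}\Bigg|\sum_t\alpha_{m_0+ts}\,\K\bigl(c(m_0+ts)n\ol{s}^2;r\bigr)\Bigg|^2.
\]
Expanding the inner square in $(t,t')$, the diagonal $t=t'$ contributes $\ll\|\bm{\alpha}\|_2^2 N$ via the Weil bound $|\K|\ll 1$, which after the outer square root yields the first term $(Ns)^{1/2}\|\bm{\alpha}\|_2\|\bm{\beta}\|_2$ of \eqref{eqBKs}.

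For the off-diagonal $t\neq t'$, I complete the $n$-sum modulo $r$ via additive characters $e_r(hn)$, reducing the problem to the complete sums
\[
C(m_0,t,t',h)=\sum_{n\pmod r}\K\bigl(c(m_0+ts)n\ol{s}^2;r\bigr)\,\ol{\K\bigl(c(m_0+t's)n\ol{s}^2;r\bigr)}\,e_r(hn).
\]
On a \emph{generic} set of parameters, $C$ is a trace function of a pure lisse $\ell$-adic sheaf of bounded conductor and enjoys square-root cancellation $|C|\ll r^{1/2}$ by Deligne's Weil~II and Katz's analysis of Kloosterman sheaves. A careful balance between the two ranges of the completion frequency $h$ then produces the second term $(MN)^{1/2}r^{-1/4}\|\bm{\alpha}\|_2\|\bm{\beta}\|_2$ of \eqref{eqBKs}. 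On the complementary \emph{degenerate} locus (principally $t\equiv t'\pmod r$ and $h\equiv 0$, where the underlying sheaf becomes geometrically reducible), the trivial bound $|C|\ll r$ together with the Polya--Vinogradov completion loss produces the third term $M^{1/2}r^{1/4}\|\bm{\alpha}\|_2\|\bm{\beta}\|_2$.

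The main obstacle will be the identification of the degenerate locus and the proof of uniform square-root cancellation on its complement: analyzing the tensor product of two shifted Kloosterman sheaves, determining its local monodromies at $0$ and $\infty$, and bounding its conductor. A further subtlety arises at prime-power factors $p^k\mid r$, where the cohomological framework must be replaced on the $p^k$-part by the $p$-adic Postnikov expansion and stationary-phase methods of \cite{BM2015} in order to recover the same square-root cancellation. Finally, the reduction from the general case $s\mid q$ to a genuinely coprime decomposition must be carried out so as to preserve all three terms of \eqref{eqBKs} up to factors absorbable into $q^\ve$.
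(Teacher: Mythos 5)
Your proposal follows the same high-level strategy as \cite[Section 9]{BM2015}, which the paper adapts rather than re-proves: split $m$ into residue classes modulo $s$, Cauchy--Schwarz to strip off the bounded $s$-part, complete the $n$-sum modulo $q/s$, and invoke square-root cancellation in the resulting complete correlation sums. Your Cauchy--Schwarz is organized over $(m_0,n)$ rather than over $n$ alone, but this is an inessential rearrangement of the same Weyl-differencing idea, and your diagonal computation correctly reproduces the first term of \eqref{eqBKs}. Also, at prime moduli the heavy Weil~II/Katz apparatus you cite is not needed: the completed correlation $\sum_{n\bmod p}\K(an;p)\overline{\K(bn;p)}e_p(hn)$ collapses (after executing the linear $n$-sum inside the two Kloosterman sums) to a one-variable exponential sum of a rational function, handled by the classical Weil bound.

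The genuine gap is the opening step. The twisted-multiplicativity factorization $\K(\cdot;q)=\K(\cdot\,\ol{r}^2;s)\K(\cdot\,\ol{s}^2;r)$ with $r=q/s$ simply does not exist when $(s,q/s)>1$, and this is not an edge case: the theorem allows arbitrary $s\mid q$, and the hypothesis is violated whenever $q$ has a repeated prime factor and $s$ sits strictly inside it (e.g., $q=p^{10}$, $s=p^3$). There is no ``refined multiplicativity'' that produces such a splitting; what actually does the work in \cite{BM2015} is the explicit $p$-adic stationary-phase evaluation of $\K(a;p^\nu)$ for $\nu\ge2$, which yields a quasi-factorization of a quite different nature (a sum of two explicit oscillatory terms whose phase is approximately linear in the shift $ts$). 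That evaluation is the central engine of the entire argument, not the subsidiary patch for prime-power pieces that you relegate it to at the end of your proposal. Relatedly, your outline does not engage with the part of the argument the paper actually has to supply to make the statement hold in full generality: the 2-adic stationary-phase and complete-sum lemmas (Lemmas~\ref{lemKlo2} and \ref{lemcs2}) that remove the restriction $(q/s,2)=1$ from \cite[Theorem~5]{BM2015}, the adjustments permitting general coefficients $\lambda(k)$ in place of $|\lambda(k)|\le1$, and the M\"obius-inversion argument removing the coprimality constraint $(m,q)=1$. Those modifications, rather than the complete-sum bounds themselves, are what Theorem~\ref{thmBKs} needs beyond the existing literature.
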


When $q$ lacks favorable factorization (i.e., is essentially a prime or a product of two, not necessarily distinct, similarly-sized primes), we adopt the decomposition:
\begin{align}\label{defqrho}
q= q^\star\rho,
\end{align}
where $q^\star$ denotes either a prime number or a product of two primes. Let $p_{\min}$ represent the smallest prime factor of $q^{\star}$. This scenario is addressed by the subsequent theorem.

\begin{theorem} \label{thmBK}
Let $q=q^{\star}\rho$ as defined in \eqref{defqrho} with $p_{\min}\ge(q/\rho)^{\frac13}$. For real numbers $M$ and $N$ satisfying
\begin{align}\label{eqcBK}
1\le M\le Nq^{\frac14},\quad MN\le q^{\frac54} \rho^{-\frac14},
\end{align}
let $\mN$ be an integer interval of length $\lfloor N\rfloor$, and let $\bm{\alpha}=(\alpha_m)$ and $\bm{\beta}=(\beta_n)$ be complex sequences supported respectively on $[M, 2M]$ and $\mN$. Then for any integer $c$ coprime with $q$, we have
\begin{equation}
\label{eqBKrho}
\begin{aligned}
&\sum_{M\le m\le 2M}\sum_{n\in\mN} \alpha_m \beta_n \K(cmn;q)\\
&\qquad\ll q^\ve\|\bm{\alpha}\|_2\|\bm{\beta}\|_2(MN)^{\frac12}\l(M^{-\frac12}+p_{\min}^{-\frac12}+(MN)^{-\frac3{16}}q^{\frac{11}{64}}\rho^{\frac{9}{64}}\r).
\end{aligned}
\end{equation}
\end{theorem}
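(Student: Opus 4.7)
The plan is to extend the Kowalski--Michel--Sawin bilinear Kloosterman bound \cite{KMS2017} (established for prime $q$) to moduli of the form $q = q^\star \rho$, where $q^\star$ is prime or a product of two primes each at least $(q/\rho)^{1/3}$, absorbing the extra data into the admissible loss factor $\rho^{9/64}$. The proof therefore follows the broad skeleton of the KMS argument, with two key modifications: handling (i) the extra factor $\rho$, for which a hybrid with the $p$-adic methods of \cite{BM2015} is natural, and (ii) the case when $q^\star$ is the product of two primes, where there is no single large prime to carry all the savings.

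As a first step, I would dualize the $n$-sum: after a dyadic decomposition one may assume $\bm\beta$ is supported in an interval of length $N$, and Poisson summation modulo $q$ converts $\sum_n \beta_n \K(cmn;q)$ into a sum of length about $q/N$ against essentially the same Kloosterman kernel (via its discrete Fourier involution) with modified coefficients $\hat\beta_h$. Next, I would apply Cauchy--Schwarz in the $m$-variable; opening the square and swapping summations produces a correlation sum
\[
\mathcal{C}(h_1,h_2) \;=\; \sum_{m} \K(c m h_1;q)\,\ol{\K(c m h_2;q)}
\]
(or a mildly shifted variant) weighted by $\hat\beta_{h_1}\ol{\hat\beta_{h_2}}$. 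The diagonal $h_1=h_2$ produces the first term $M^{-1/2}$ of \eqref{eqBKrho}; the main task is to bound the off-diagonal contribution.

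For the off-diagonal, I would use $q=q^\star\rho$ and the Chinese Remainder Theorem to split $\mathcal{C}(h_1,h_2)$ into a product of correlation sums modulo $q^\star$ and modulo $\rho$. The modulo-$\rho$ factor is handled by elementary estimates exploiting the smallness of $\rho$, while the modulo-$q^\star$ piece is where algebro-geometric cancellation is extracted. After an additional completion/Cauchy--Schwarz step on the $m$-side, the latter reduces to a ``sum of products'' of trace functions in the sense of \cite{FKM2015,KMS2017}; by CRT this splits prime-by-prime across the prime factors of $q^\star$, and at each prime $p$ Deligne's Riemann Hypothesis together with Katz's monodromy analysis of Kloosterman sheaves yields square-root cancellation, provided the relevant algebraic variety is geometrically irreducible of the expected dimension. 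The second term $p_{\min}^{-1/2}$ in \eqref{eqBKrho} accounts for exceptional pairs $(h_1,h_2)$ that make this variety degenerate modulo the smaller prime, while the third term $(MN)^{-3/16}q^{11/64}\rho^{9/64}$ is the generic off-diagonal contribution after optimizing the length of the completed sum.

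The principal obstacle is the geometric analysis of the sum of products modulo $q^\star$ when $q^\star=p_1p_2$ with primes of comparable size: unlike the prime case of \cite{KMS2017}, one must verify square-root cancellation separately at each prime, and in particular establish the geometric irreducibility of the relevant variety at both $p_1$ and $p_2$; this is precisely the role of \S\ref{variety-sec}. The condition $p_{\min}\ge (q/\rho)^{1/3}$ is exactly what is needed to ensure each prime factor is large enough that the sheaf-theoretic bound at that prime dominates the trivial contribution inherited from the other, reflecting the cube-root loss built into the second Cauchy--Schwarz. Optimizing among the diagonal, degenerate, and generic contributions under the hypotheses \eqref{eqcBK} then produces the three terms on the right-hand side of \eqref{eqBKrho}.
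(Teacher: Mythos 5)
Your broad outline (Cauchy--Schwarz, CRT splitting of the modulus, sums of products of trace functions handled prime-by-prime) is in the right direction, but there are two genuine gaps.

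First, the order of operations is wrong in a way that matters. You propose to dualize the $n$-sum by Poisson summation modulo $q$ \emph{before} any Cauchy--Schwarz. But $\bm\beta$ is an arbitrary sequence, so Poisson (or completion) cannot be applied to $\sum_n \beta_n \K(cmn;q)$ directly; one must first remove $\bm\beta$. The paper does Cauchy--Schwarz on $n$ first, reducing to $\sum_{n\in\mN}|\sum_m\alpha_m\K(cmn;q)|^2$, and then --- crucially --- applies Vinogradov's ``shifted-by-$ab$'' trick (\S4.1) before any completion. This step is what amplifies the object of study from a $2$-fold correlation $\mathcal{C}(h_1,h_2)=\sum_m\K(cmh_1;q)\ol{\K(cmh_2;q)}$ (which you arrive at and which, already for prime $q$, yields essentially nothing beyond P\'olya--Vinogradov) to the $8$-fold sum of products $\Sigma^d(\bm{b},AM;q)$ in \eqref{eqSigma}, and it is only at \emph{this} level, after H\"older and Poisson on the $l$-variables, that the KMS-type square-root cancellation in the completed sums $\mathfrak{S}(\bm{b},\bm{h},d;q)$ becomes available and yields the third term $(MN)^{-3/16}q^{11/64}\rho^{9/64}$. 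Your ``additional completion/Cauchy--Schwarz step'' is too vague to stand in for the shifted-by-$ab$ amplification, which is the engine of the proof and not optional.

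Second, you mislocate the role of \S\ref{variety-sec}. You describe it as verifying geometric irreducibility ``at both $p_1$ and $p_2$'' for $q^\star=p_1p_2$ with distinct primes. In fact, for distinct primes the paper simply splits the completed sum by CRT and invokes the existing KMS bound (\cite[Theorem 2.6]{KMS2017}, packaged here as Lemma \ref{lemS1}) at each prime separately; no new geometry is needed. The genuinely new and hard case is $q^\star=p^2$, which your sketch does not mention at all. There the completed sum $\mathfrak{S}(\bm{b},\bm{h},1;p^2)$ cannot be handled by the prime-modulus trace-function machinery; the paper instead performs a $p$-adic stationary-phase analysis (the parameterization $s_i=x_i(1+y_ip)$, $r=u+vp$, $x=x_{ij}(1+y_{ij}p)$) that reduces the problem to counting rational points on a nearly zero-dimensional variety $\mV_{11}\subset\bbm{F}_p^{11}$, which is exactly the content of Theorem \ref{lemBS2}, Lemma \ref{thmK}, and \S\ref{variety-sec}. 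This $p^2$ case is the main technical novelty of the section and must be addressed; a proof that treats only prime $q^\star$ and $q^\star=p_1p_2$ with $p_1\neq p_2$ does not establish the theorem.
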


\subsection{Proof of Theorem \ref{thmmain}}
\label{thmmain-subsec}
In this section, we prove Theorem~\ref{thmmain} by combining the results of Theorem~\ref{thmBKs} and Theorem~\ref{thmBK}.

\begin{proof}[Proof of Theorem~\ref{thmmain}]
The estimate \eqref{eqmain} becomes nontrivial under the condition
\[
 M^{\frac25}N\gg q^{\frac23},
\]
a convention we maintain throughout the subsequent proof. For given $q$, we take
\[
\rho=\prod_{ p\le q^{1/3},\ p^k\parallel q}p^k.
\]
When $\rho\neq q$, the component $q^\star$ exhibits one of two forms: either a single prime or a product of two primes, with the smallest prime factor $p_{\min}>q^{\frac1{3}}$.
The critical threshold
\[
\rho_0=M^{\frac{12}{25}}N^{-\frac 45}q^{\frac15}
\]
emerges as the equilibrium point where the last terms in \eqref{eqBKs} and \eqref{eqBKrho} concide when $s=\rho=\rho_0$.
Through systematic analysis of the constrains $MN^{-1}\ll q^{\frac14}$, $M^{\frac25}N\gg q^{\frac{2}{3}}$, and $MN\le q^{\frac54}$, we derive two refined bounds for $\rho_0$ such that
\begin{align}\label{eqrho01}
\rho_0&=(MN^{-1})^{\frac{4}{7}}(M^{\frac25}N)^{-\frac{8}{35}}q^{\frac15}\ll (MN^{-1})^{\frac25}q^{\frac{19}{210}},\\
\label{eqrho02}
\rho_0&=N^{-\frac23}(MN^{-1})^{\frac{23}{75}}(MN)^{\frac{13}{75}}q^{\frac15}\ll N^{-\frac23}q^{\frac{37}{75}}.
\end{align}
The proof strategy bifurcates based on the relative magnitude of $\rho$ compared to $\max\{1,\rho_0\}$:

\emph{Case I:} $\rho\le \max\{1, \rho_0\}$.
Condition \eqref{eqcBK} is ensured by \eqref{conditionMN}.
Applying Theorem \ref{thmBK} yields
\begin{align*}
&\sum_{m\le M}\sum_{n\in\mN} \alpha_m \beta_n \K(cmn;q)\ll q^\ve\|\bm{\alpha}\|_2\|\bm{\beta}\|_2(MN)^{\frac12}\l(M^{-\frac12}+q^{-\frac1{6}}+(MN)^{-\frac3{16}}q^{\frac{11}{64}}\rho^{\frac{9}{64}}\r)\\
&\qquad\qquad\ll q^\ve\|\bm{\alpha}\|_2\|\bm{\beta}\|_2(MN)^{\frac12}\l(M^{-\frac12}+q^{-\frac1{6}}+M^{-\frac3{25}}N^{-\frac3{10}}q^{\frac15}+(MN)^{-\frac3{16}}q^{\frac{11}{64}}\r).
\end{align*}

\emph{Case II:} $\rho>\max\{1, \rho_0\}$.
Here, there exists an integer $s\mid\rho$ satisfying
\begin{align}\label{eqconditions}
\rho_0\le s\le \max\{q^{\frac13}, \rho_0^2\}.
\end{align}
To justify this, observe that if any prime $p\mid \rho$ satisfies $\rho_0\le p \le q^{\frac1{3}}$, we set $s=p$. Otherwise, all prime factors of $\rho$ are smaller than $\rho_0$, and the existence of such $s$ is obvious. Applying Theorem \ref{thmBKs} with this $s$ gives
\begin{align*}
&\sum_{m\le M}\sum_{n\in\mN} \alpha_m \beta_n \K(cmn;q)\\
&\qquad\qquad\ll q^\ve\|\bm{\alpha}\|_2\|\bm{\beta}\|_2(MN)^{\frac12}
\l(M^{-\frac12}\l(q^{\frac16}+\rho_0\r)+q^{-\frac{1}{4}}\l(q^{\frac1{12}}+\rho_0^{\frac12}\r)+N^{-\frac12}q^{\frac14}\rho_0^{-\frac14}\r).
\end{align*}
From \eqref{eqrho01} and \eqref{eqrho02}, it follows that
\[
M^{-\frac12}\rho_0\ll N^{-\frac12}q^{\frac14}\rho_0^{-\frac14},\quad q^{-\frac14}\rho_0^{\frac12}\ll N^{-\frac12}q^{\frac14}\rho_0^{-\frac14},
\]
which simplifies the bound to
\begin{align*}
\sum_{m\le M}\sum_{n\in\mN} \alpha_m \beta_n \K(cmn;q)&\ll q^\ve\|\bm{\alpha}\|_2\|\bm{\beta}\|_2(MN)^{\frac12}
\l(M^{-\frac12}q^{\frac16}+q^{-\frac{1}{6}}+M^{-\frac3{25}}N^{-\frac3{10}}q^{\frac15}\r).
\end{align*}
Combining the estimates from both cases completes the proof of Theorem \ref{thmmain}.
\end{proof}

\subsection{Sketch of the treatments for Theorems \ref{thmBKs} \& \ref{thmBK}}
\label{sketch-sec}
An application of the Cauchy--Schwarz inequality gives
\begin{align}\label{eqthm5BM}
\bigg|\sum_{m\le m\le 2M}\sum_{n\in\mN} \alpha_m \beta_n \K(cmn;q)\bigg|^2\ll \|\bm{\beta}\|_2^2\sum_{n\in\mN}\B|\sum_{M\le m\le 2M}\alpha_m\K(cmn;q)\B|^2,
\end{align}
which reduces the problem to estimating the sum
\begin{align}\label{eqSumKlo}
\sum_{n\in\mN}\B|\sum_{M\le m\le 2M}\alpha_m\K(cmn;q)\B|^2.
\end{align}
The analysis of \eqref{eqSumKlo} bifurcates according to the factorization structure of $q$, requiring fundamentally distinct methods.
When $q$ admits a favorable factorization, we employ ``Weyl differencing'' to reduce the modulus prior the deeper Kloosterman sums analysis. This approach is detailed in \S \ref{secwfm}, adapting the methodology of \cite[Section 9]{BM2015} while eliminating auxiliary conditions required in \cite[Theorem 5]{BM2015}. When $q$ has large prime factors, we effectively reduce the modulus to the product of the two largest prime factors. Through ``shifted by $ab$'' trick, we transfer the problem to handling multiplicative completed exponential sums. For prime moduli, these were rigorously treated by Kowalski, Michel, and Sawin \cite{KMS2017}. The most intricate case occurs for the product of two primes, where we establish bounds by enumerating rational points on a nearly zero-dimensional variety in $\bbm{F}_p^{11}$.

\section{Bilinear forms for well-factorable moduli}\label{secwfm}
In this section, we give the proof of Theorem \ref{thmBKs} where $q$ admits a favorable factorization.
Given a divisor $s$ of $q$ with suitable size, we first apply ``Weyl differencing'' to reduce the modulus, then obtain power savings through careful analysis of Kloosterman sums, where the saving depends critically on the parameter $s$.
This follows the approach of the work of Blomer and Mili\'cevi\'c \cite[Theorem 5]{BM2015}, who established the bound
\begin{align}\label{eqthm5BM2015}
\sum_{\substack{M\le m\le 2M\\ (m,q)=1}}\B|\sum_{K\le k\le 2K}\lambda(k)\K(ckm;q)\B|^2\ll (KMq)^\ve\B(KMs+\frac{K^2Ms^{\frac12}}{q^{\frac12}}+\frac{K^2q^{\frac12}}{s^{\frac12}}\B)
\end{align}
under the restrictive assumptions that $\lambda(k)\le 1$, $(q/s,2)=1$, and $(m,q)=1$. These conditions prevent direct application to our setting.

The following theorem presents a refined version of \eqref{eqthm5BM2015}, where we eliminate all technical constraints.
For consistency with the literature, we maintain the original notation from \cite[Theorem 5]{BM2015}, while adopting the standard Kloosterman sums.

\begin{theorem}\label{lemSumMK}
Let $M,K\ge1$, $s,q\in\mathbb{N}$ with
$s\mid q$.
Let $\lambda: \mathbb{N}\rightarrow\mathbb{C}$ be an arithmetic function supported on $[K, 2K]$. For any integer $c$ coprime with $q$, we have
\begin{align}\label{eqSumMK}
\sum_{M\le m\le 2M}\B|\sum_{K\le k\le 2K}\lambda(k)\K(ckm;q)\B|^2\ll (qKM)^\ve\|\lambda\|_2\B(Ms+\frac{KMs^{\frac12}}{q^{\frac12}}+\frac{Kq^{\frac12}}{s^{\frac12}}\B).
\end{align}
\end{theorem}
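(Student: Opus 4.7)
The plan is to follow the Weyl-differencing/completion strategy of Blomer--Mili\'cevi\'c \cite[\S 9]{BM2015}, exploiting the factorization $q = sq_1$ with $q_1 = q/s$. The three terms in \eqref{eqSumMK} correspond, respectively, to the diagonal main term (of size $Ms$ per unit of $\|\lambda\|_2^2$), the off-diagonal Weil contribution at nonzero frequencies (of size $KMs^{1/2}q^{-1/2}$), and the completion-tail contribution (of size $Kq^{1/2}s^{-1/2}$). The role of the parameter $s$ is to be the ``small factor'' treated trivially, while $q_1 = q/s$ is the modulus from which square-root cancellation is extracted via Deligne--Weil.

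First, opening the square yields $\sum_m |S(m)|^2 = \sum_{k_1,k_2}\lambda(k_1)\overline{\lambda(k_2)}\,T(k_1,k_2)$, where the correlation sum is $T(k_1,k_2) = \sum_{M \le m \le 2M} \K(ck_1m;q)\overline{\K(ck_2m;q)}$. After reducing to the coprime case $(s,q_1)=1$ by absorbing any shared prime powers into $q_1$, the twisted multiplicativity $\K(a;q) = \K(a\overline{q_1};s)\K(a\overline{s};q_1)$ exposes the $s$-vs-$q_1$ split inside each summand. I would then apply Poisson summation to the $m$-sum modulo $q$, obtaining $T(k_1,k_2) = q^{-1}\sum_h W(h)\,\mathcal U(h;k_1,k_2)$ with smooth weights $W(h) \ll \min(M,\, q/|h|)$ effectively supported on $|h| \le q^{1+\ve}/M$ and complete sums $\mathcal U(h;k_1,k_2) = \sum_{m\,(\mathrm{mod}\,q)}\K(ck_1m;q)\overline{\K(ck_2m;q)}e_q(hm)$, which factor by CRT as $\mathcal U_s\cdot\mathcal U_{q_1}$.

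The central step is bounding these complete sums. I would estimate the mod-$s$ factor by a $\ll s\cdot\tau(s)^{O(1)}$-bound (this is precisely where the factor $s$ in the first term of \eqref{eqSumMK} enters). For the mod-$q_1$ factor, I would invoke the Deligne--Weil bounds on the correlation of additively shifted Kloosterman sums: when $k_1 \equiv k_2 \pmod{q_1}$ and $h \equiv 0 \pmod{q_1}$, the ``diagonal'' produces the main term $\mathcal U_{q_1} \asymp q_1$, yielding the $Ms\|\lambda\|_2^2$ contribution after summing over $k$; in all other cases, one obtains square-root cancellation $|\mathcal U_{q_1}| \ll q_1^{1/2+\ve}$. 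Combining this with the Poisson weight decay, a dyadic decomposition in $|h|$, and Cauchy--Schwarz to pass from $\|\lambda\|_1$ to $\|\lambda\|_2$, delivers the two remaining terms $KMs^{1/2}q^{-1/2}$ and $Kq^{1/2}s^{-1/2}$.

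The principal difficulty --- and the novelty over \cite[Theorem 5]{BM2015} --- is removing the auxiliary hypotheses $|\lambda(k)|\le 1$, $(m,q)=1$, and $(q/s,2)=1$. I expect the $2$-part of $q$ to be the most demanding: Kloosterman sums modulo powers of $2$ fall outside the direct scope of the Deligne framework and must be handled by explicit Sali\'e-type evaluations or $2$-adic stationary phase, and the analogous bounds on correlations mod $2^a$ must be reproved directly. Removing the coprimality $(m,q)=1$ requires a finer dyadic decomposition of $m$ according to $\gcd(m,q)$, on each piece of which the Kloosterman sum degenerates into a Ramanujan-type or Gauss-type sum whose contribution must be carefully reabsorbed into the $Ms$ diagonal; and the replacement $|\lambda|\le 1 \leadsto \|\lambda\|_2^2$ (with general complex $\lambda$) is maintained throughout by always pushing Cauchy--Schwarz in $k$ to the last step rather than up front.
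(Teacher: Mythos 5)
Your overall plan correctly identifies the reference (\cite[Section 9]{BM2015}) and correctly isolates the three genuinely nontrivial obstacles that the paper resolves: the $2$-adic case, the coprimality $(m,q)=1$, and the passage from $|\lambda(k)|\le 1$ to general complex $\lambda$. The high-level descriptions of how to handle those obstacles (explicit $2$-adic stationary phase in place of the Deligne framework; dyadic decomposition of $m$ by $\gcd(m,q)$ with the Kloosterman sum degenerating; deferring Cauchy--Schwarz in $k$) are all in the right spirit. However, the \emph{core} of your argument --- the actual mechanism that produces the three-term bound --- is not the one used in the paper, and as written it does not yield the stated bound.

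The specific gap is this. You propose opening the square, applying Poisson summation to the $m$-sum modulo the full modulus $q$, factoring the resulting complete sums $\mathcal U(h;k_1,k_2)$ by the Chinese remainder theorem into $\mathcal U_s\cdot\mathcal U_{q_1}$, and then estimating $\mathcal U_s$ trivially by $s\cdot\tau(s)^{O(1)}$ while extracting square-root cancellation from $\mathcal U_{q_1}$. Run the numerology: the Poisson weight is $\asymp M$ for $|h|\ll q/M$ and decays beyond, so the off-diagonal contribution (for $k_1\ne k_2$ generic) is of order
\[
\frac{1}{q}\cdot\frac{q}{M}\cdot M\cdot s\cdot q_1^{1/2}\;=\;s\,q_1^{1/2}\;=\;(sq)^{1/2},
\]
and after $\sum_{k_1\ne k_2}|\lambda(k_1)\lambda(k_2)|\ll K\|\lambda\|_2^2$ this gives a third term $\asymp K(sq)^{1/2}\|\lambda\|_2^2$, which is \emph{larger} by a factor of $s$ than the claimed $Kq^{1/2}s^{-1/2}\|\lambda\|_2^2$. (Symmetrically, the genuine diagonal $k_1=k_2,\ q_1\mid h$ gives $M\|\lambda\|_2^2$, not $Ms\|\lambda\|_2^2$; the $s$ in the first term of \eqref{eqSumMK} is a \emph{cost} paid elsewhere, not a trivial bound on $\mathcal U_s$.) The trivial estimate on the mod-$s$ factor simply throws away the savings the parameter $s$ is supposed to deliver, and Cauchy--Schwarz in $k$ cannot recover them.

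What actually makes the argument work --- both in \cite{BM2015} and in the paper's \S\ref{secwfm} --- is a genuine van der Corput/Weyl-differencing step \emph{before} completion: one averages the $m$-variable over $\sim s$ shifts that are periodic modulo $q/s$, which is where the factor $s$ in the diagonal term $Ms$ is incurred, and in exchange the modulus on which completion and square-root cancellation are performed is reduced from $q$ to $q/s$. Equivalently, the resulting complete sums must be estimated with gcd-sensitive bounds that \emph{vanish} outside restrictive congruence conditions on $(h,k_1-k_2)$ --- see the factor $(k_1-k_2,h,2^s)^{1/2}\delta_{(k_1-k_2,2^s)\mid 8h}$ in Lemma~\ref{lemcs2} --- rather than being bounded by $s$ uniformly. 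Your proposal mentions ``Weyl differencing'' in passing but then replaces it by Poisson completion modulo $q$ with a trivial $s$-bound, which is a structurally different and lossier calculation; the two are not interchangeable here. Finally, a related but genuinely technical point your sketch omits is that after the stationary-phase evaluation the Kloosterman sum splits into $2^{\varrho}$ sign branches $\epsilon\in\{\pm 1\}^\varrho$ (one for each prime factor), and the decoupling of $\lambda(k)$ from the branch coefficients $b_1^{\epsilon}(k)$ --- the modified definition $b_1(k)=\lambda(k)\sum_{\epsilon}b_1^{\epsilon}(k)$ --- is precisely what lets one replace $\|\lambda\|_\infty$ by $\|\lambda\|_2^2$ without losing to the combinatorics of the branches.
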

The removal of the coprimality condition $(m,q)=1$ obviates the need to introduce a factor $r\mid q$ as in \cite[Theorem 5]{BM2015}, since it makes no essential difference in the context of M\"obius inversion. We therefore set $r=q$ without loss of generality. Theorem \ref{thmBKs} follows immediately upon substituting the estimate \eqref{eqSumMK} into \eqref{eqthm5BM}.

\subsection{The case for high powers of $2$}
The condition $(q/s,2)=1$ affects the bound \eqref{eqthm5BM2015} only when $q$ is divisible by extremely high powers of $2$. To fill this gap, we establish an estimate as that in \cite[Propostion 23]{BM2015} for high powers of $2$.

\subsubsection{Generalities about the $2$-adic square root}
We begin by noting the obvious fact that, whenever $x\equiv y\bmod{2^{k-1}}$, $x^2\equiv y^2\bmod{2^k}$ automatically. Thus, when taking square roots of $u\bmod{2^k}$, we should be looking at $x\bmod{2^{k-1}}$ such that $x^2\equiv u\bmod{2^k}$.

We claim that, for every $k\geqslant 3$, and for every $u\equiv 1\bmod 8$, there exist exactly two values of $x\bmod 2^{k-1}$ (which arise in two $2$-adic towers) such that
\[ x^2\equiv u\pmod{2^k}. \]
This follows by a standard Hensel's Lemma argument. The claim is clearly true for $k=3$: $x\equiv\pm 1\bmod 4$ satisfy $x^2\equiv 1\bmod 8$. Inductively, if, for some $k\geqslant 3$, we have a solution
\[ x_0^2\equiv u\pmod{2^k},\quad\text{that is,}\quad x_0^2=u+2^kY, \]
and we are looking to lift this to a solution $x_0+2^{k-1}t\pmod{2^k}$ ($t\in\{0,1\}$) satisfying
\[ (x_0+2^{k-1}t)^2\equiv u\pmod{2^{k+1}}, \]
in light of $2k-2\geqslant k+1$ this is equivalent to
\[ 2^kx_0t\equiv -2^kY\pmod{2^{k+1}},\quad t\equiv -Y\overline{x_0}\pmod 2, \]
and thus we have a unique lift $x_0+2^{k-1}t\pmod{2^k}$. The claim follows by induction.

We can now denote the two (exact) $2$-adic square roots by $\pm u_{1/2}(x)$ with $u_{1/2}(x)\equiv 1\bmod 4$, and we simply write $x_{1/2}$ for $u_{1/2}(x) \bmod{2^k}$ with a sufficiently large value of $k$. The claim indicates, for $k\geqslant 3$ and $x\equiv y\equiv 1 \bmod 8$,
\begin{align}\label{eqxx2}
 x\equiv y\pmod{2^{k}}\,\Leftrightarrow\,x_{1/2}\equiv y_{1/2}\pmod{2^{k-1}}.
\end{align}

\begin{lemma}\label{lemuu'}
If $u,u'\equiv 1\bmod 8$, $u\equiv u'\pmod{2^{\lambda+1}}$, $t\equiv t'\pmod{2^{\lambda}}$, and $k\geqslant 3$, then
\begin{align}\label{equu}
 (u+2^kt)_{1/2}-(u'+2^kt')_{1/2}\equiv u_{1/2}-u'_{1/2}+\overline{u_{1/2}}2^{k-1}t-\overline{u'_{1/2}}2^{k-1}t'  \pmod{2^{2k+\lambda-3}}.
\end{align}
\end{lemma}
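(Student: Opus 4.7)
The plan is to work $2$-adically, regarding $u, u', t, t'$ as elements of $\mathbb{Z}_2$ with $u, u' \equiv 1 \pmod{8}$, and to interpret $x_{1/2}$ as the unique $2$-adic square root of $x$ with $x_{1/2} \equiv 1 \pmod 4$. The main engine will be the $2$-adic binomial expansion
\begin{equation*}
(u + 2^k t)_{1/2} = u_{1/2} \sum_{j \geq 0} \binom{1/2}{j} (2^k t \overline{u})^j = u_{1/2} + \overline{u_{1/2}} \cdot 2^{k-1} t + E(u, t),
\end{equation*}
where $E(u, t) := \sum_{j \geq 2} \binom{1/2}{j} 2^{jk} t^j \overline{u_{1/2}} \, \overline{u}^{\,j-1}$ is the higher-order tail. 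The series converges $2$-adically under the hypothesis $k \geq 3$, as the identity $\binom{1/2}{j} = (-1)^{j-1} \binom{2j-2}{j-1}/(j \cdot 2^{2j-1})$ combined with Kummer's theorem $\nu_2(\binom{2j-2}{j-1}) = s_2(j-1)$ gives $\nu_2(\binom{1/2}{j}) = 1 - 2j - \nu_2(j) + s_2(j-1)$, where $s_2$ denotes the binary digit sum.

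With this expansion in hand, the claim \eqref{equu} reduces to showing $E(u, t) \equiv E(u', t') \pmod{2^{2k + \lambda - 3}}$. The key inputs will be $t^j \equiv t'^j \pmod{2^\lambda}$ (from $t \equiv t' \pmod{2^\lambda}$), $\overline{u}^{\,j-1} \equiv \overline{u'}^{\,j-1} \pmod{2^{\lambda+1}}$ (from $u \equiv u' \pmod{2^{\lambda+1}}$), and the square-root rigidity $u_{1/2} \equiv u'_{1/2} \pmod{2^\lambda}$---hence $\overline{u_{1/2}} \equiv \overline{u'_{1/2}} \pmod{2^\lambda}$---supplied by the biconditional \eqref{eqxx2}. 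Multiplying these congruences, the factor $t^j \overline{u_{1/2}} \, \overline{u}^{\,j-1}$ differs from its primed analogue by an element of $2^\lambda \mathbb{Z}_2$, so the $j$-th contribution to $E(u, t) - E(u', t')$ has $2$-adic valuation at least $\nu_2(\binom{1/2}{j}) + jk + \lambda$.

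It therefore suffices to verify $\nu_2(\binom{1/2}{j}) + jk \geq 2k - 3$ for every $j \geq 2$. For $j = 2$ this is the equality $-3 + 2k = 2k - 3$. For $j \geq 3$, rearranging gives $(j-2) k \geq 2j - 4 + \nu_2(j) - s_2(j-1)$, which, using the crude bounds $\nu_2(j) \leq \log_2 j$ and $s_2(j-1) \geq 1$, follows from $3(j-2) \geq 2j - 5 + \log_2 j$, i.e.\ $j - 1 \geq \log_2 j$, trivially true for $j \geq 1$. Summing over $j \geq 2$ then yields $E(u, t) - E(u', t') \in 2^{2k + \lambda - 3} \mathbb{Z}_2$, which is exactly \eqref{equu}.

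The main obstacle is the tight $2$-adic accounting for the binomial tail: the leading $j = 2$ contribution already saturates the target modulus $2^{2k + \lambda - 3}$, so any headroom for the higher-order terms must come from the precise Kummer-theoretic formula for $\nu_2(\binom{1/2}{j})$ rather than a naive bound such as $\nu_2(\binom{1/2}{j}) \geq -2j$. The hypothesis $k \geq 3$ is precisely what is needed both for $2$-adic convergence of the binomial expansion and for the uniform closure of the tail estimate over all $j \geq 3$.
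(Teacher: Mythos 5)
Your proof is correct and follows essentially the same strategy as the paper's: expand $(u+2^kt)_{1/2}=u_{1/2}(1+2^kt\overline{u})^{1/2}$ by the $2$-adic binomial series, isolate the $j=0,1$ terms, and bound the tail of the difference termwise using $u_{1/2}\equiv u'_{1/2}\pmod{2^\lambda}$, $t\equiv t'\pmod{2^\lambda}$ together with the valuation estimate $\nu_2\bigl(\binom{1/2}{j}\bigr)+jk\geqslant 2k-3$ for $j\geqslant 2$, $k\geqslant 3$. The paper phrases the expansion via finite truncations $\sum_{j=0}^{n}$ with $n\geqslant\lambda+1$ and an explicit truncation-error bound rather than the full convergent $2$-adic series, and uses only the elementary $\ord_2(j!)\leqslant j-1$ (i.e.\ $s_2(j)\geqslant 1$, giving $\nu_2\binom{1/2}{j}\geqslant 1-2j$) rather than the sharper Kummer-type formula you invoke --- that simpler bound already suffices, so your closing remark slightly overstates what is needed, but the argument is otherwise identical in substance.
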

\begin{proof}
The most direct way to prove this is by comparing
\[ (u+2^kt)_{1/2}\quad\text{and}\quad \sum_{j=0}^n\binom{1/2}{j}u_{1/2}^{1-2j}(2^kt)^j. \]
Indeed, using the agreement of formal power series
$((1+x)^{1/2})^2=1+x$,
we have that
\begin{align*}
&\ord_2\Big[\Big(\sum_{j=0}^n\binom{1/2}{j}u_{1/2}^{1-2j}(2^kt)^j\Big)^2 - (u+2^kt)\Big]\geqslant\min_{j_1+j_2\geqslant n+1}\ord_2\Big[\binom{1/2}{j_1}\binom{1/2}{j_2}(2^k)^{j_1+j_2}\Big]\\
&\qquad=\min_{j_1+j_2\geqslant n+1}\left[(k-1)(j_1+j_2)-\ord_2(j_1!)-\ord_2(j_2!)\right]\geqslant (n+1)(k-2)+2,
\end{align*}
where we use the obvious bound
$\ord_2(j!)=\lfloor j/2\rfloor+\lfloor j/4\rfloor+\dots \leqslant j-1$.
From this it follows that, for $k\geqslant 3$,
\[  (u+2^kt)_{1/2}\equiv\sum_{j=0}^n\binom{1/2}{j}u_{1/2}^{1-2j}(2^kt)^j\pmod{2^{(n+1)(k-2)+1}}. \]
From this we conclude that
\begin{align*}
&(u+2^kt)_{1/2}-(u'+2^kt')_{1/2}\\
&\qquad \equiv u_{1/2}-u'_{1/2}
+\sum_{j=1}^n\binom{1/2}j2^{kj}\l(u_{1/2}^{1-2j}t^j-u_{1/2}'{}^{1-2j}t'{}^j\r)\pmod{2^{(n+1)(k-2)+1}}.
\end{align*}
From $u\equiv u'\pmod{2^{\lambda+1}}$, we have that $u_{1/2}\equiv u'_{1/2}\pmod{2^{\lambda}}$, whence
\[ \ord_2\Big[\binom{1/2}j2^{kj}\l(u_{1/2}^{1-2j}t^j-u_{1/2}'{}^{1-2j}t'{}^j\r)\Big]\geqslant(k-2)j+\lambda+1. \]
Thus, choosing $n\geqslant\lambda+1$, we conclude that
\[ \ord_2\Big[(u+2^kt)_{1/2}-(u'+2^kt')_{1/2}-\l(u_{1/2}-u'_{1/2}\r)-\l(\overline{u_{1/2}}2^{k-1}t-\overline{u'_{1/2}}2^{k-1}t'\r)\Big]\geqslant 2k+\lambda-3, \]
as announced.
\end{proof}

\subsubsection{Kloosterman sum evaluation}
For $s\ge 6$ and $2\nmid x$, let
\begin{align}\label{eqtaux}
\tau(x,2^s):=
\begin{cases}
\frac1{2\sqrt2}\sum\limits_{t\bmod 4}e\l(\frac{xt^2}4\r)=\frac{1+e(x/4)}{\sqrt2}\in\{e^{\pm i\pi/4}\}, & 2\mid s,\\
\frac1{4}\sum\limits_{t\bmod 8}e\l(\frac{xt^2}8\r)=\frac{1+2e(x/8)+e(x/2)}{2}=e(x/8), & 2\nmid s,
\end{cases}
\end{align}
which depends only on the parity of $s$ and the value of $x \bmod 8$.

\begin{lemma}\label{lemKlo2}
For $s\ge 6$ and $2\nmid a$, then
$S(a,b;2^s)=0$ unless $a\equiv b\bmod 8$, in which case
\begin{align*}
S(a,b;2^s)=2^{(s+1)/2}\sum_{\pm}\tau(\pm (ab)_{1/2},2^s)e\Bigg(\frac{\pm 2(ab)_{1/2}}{2^s}\Bigg).
\end{align*}
\end{lemma}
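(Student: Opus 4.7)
The plan is to apply a 2-adic stationary phase argument. First, the substitution $y = ax \pmod{2^s}$ transforms $S(a,b;2^s)$ into $T(c) := \sum_{2\nmid y\bmod 2^s} e((y + c\bar y)/2^s)$ with $c = ab$ odd. The phase $y + c\bar y$ has 2-adic critical locus $y^2 \equiv c$, which is empty unless $c \equiv 1 \pmod 8$, i.e., $a \equiv b \pmod 8$; this will account for the vanishing claim. For the remainder I assume $c \equiv 1 \pmod 8$.

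Next I would parametrize $y = y_0 + 2^h w$ with $h = \lceil s/2\rceil$, $y_0$ odd mod $2^h$ and $w$ ranging mod $2^{s-h}$. Taylor-expanding $\bar y = \bar y_0 - 2^h w \bar y_0^2 + 2^{2h} w^2 \bar y_0^3 - \cdots$ as a geometric series and reducing modulo $2^s$ (noting that for $s$ even the quadratic term already contributes trivially to the phase, while for $s$ odd one must retain at worst a $(-1)^w$-type Gauss factor), one obtains, up to a benign local quadratic factor,
\[ y + c\bar y \equiv (y_0 + c\bar y_0) + 2^h w(1 - c\bar y_0^2) \pmod{2^s}. \]
Summation over $w$ then forces the stationary condition $y_0^2 \equiv c$ to precision $2^{s-h}$, pinning $y_0$ to a handful of 2-adic cosets near the square roots $\pm c_{1/2}$.

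The crux is then to evaluate the phase $e((y_0 + c\bar y_0)/2^s)$ summed over those cosets. Writing $y_0 + c\bar y_0 = (y_0^2 + c)\bar y_0 = 2y_0 - 2^{s-h}\zeta\bar y_0$ with $\zeta = (y_0^2 - c)/2^{s-h}$, I would invoke Lemma~\ref{lemuu'} to expand each eligible lift $y_0$ as $\pm c_{1/2}$ plus a correction in $2^{h-1}\mathbb{Z}/2^h\mathbb{Z}$, and carefully track the contribution of this correction to the phase. The residual sums over these local cosets collapse exactly into the Gauss-like factors $\tau(\pm c_{1/2}, 2^s)$: for $s$ even, a sum over residues mod $4$ giving $(1 + e(\pm c_{1/2}/4))/\sqrt 2$; for $s$ odd, a sum over residues mod $8$ collapsing to $e(\pm c_{1/2}/8)$. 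Together with the prefactor $2^{s-h}$ from the $w$-sum, this yields the overall factor $2^{(s+1)/2}$ and the formula $T(c) = 2^{(s+1)/2}\sum_\pm \tau(\pm c_{1/2}, 2^s)\, e(\pm 2c_{1/2}/2^s)$.

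The main obstacle is the careful 2-adic bookkeeping in tracking these local corrections. In contrast to odd prime powers, the squaring map on $(\mathbb{Z}/2^s)^\times$ has kernel of order four rather than two, so four (rather than two) nearby approximate square roots of $c$ need to be summed with their individual local phase shifts, and a parity-dependent quadratic Gauss factor emerges depending on whether the quadratic term in the Taylor expansion vanishes trivially or leaves a residual $(-1)^w$. It is precisely the expansion formula from Lemma~\ref{lemuu'}, applied to the specific integer lifts of $y_0$ rather than to its 2-adic square-root representatives, that packages the accumulated local phases from these four nearby approximate roots into the compact form $\tau(\pm c_{1/2}, 2^s)\, e(\pm 2c_{1/2}/2^s)$.
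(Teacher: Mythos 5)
The proposal follows essentially the same stationary-phase approach as the paper (the preliminary substitution $y=ax$ is a cosmetic change), and the target formula is correct, but there are two places where the bookkeeping is off in a way that matters.

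First, with $h=\lceil s/2\rceil$ as you chose, the quadratic term $2^{2h}w^2 c\bar y_0^3$ in the Taylor expansion of $y+c\bar y$ vanishes modulo $2^s$ for \emph{both} parities of $s$ (since $2h\geq s$); there is no residual $(-1)^w$-type Gauss factor coming from the $w$-summation. The claim ``for $s$ odd one must retain at worst a $(-1)^w$-type Gauss factor'' would only make sense had you taken $h=\lfloor s/2\rfloor$. With your choice of $h$, the $\tau$-factor instead emerges from the sum over the several values of $y_0$ satisfying the stationary condition $y_0^2\equiv c\pmod{2^{s-h}}$: there are $4$ such $y_0$ modulo $2^h$ when $s$ is even and $8$ when $s$ is odd (each of the $4$ roots mod $2^{(s-1)/2}$ lifting in two ways to mod $2^{(s+1)/2}$). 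Your sketch only speaks of ``four nearby approximate roots'' and so does not account for the odd-$s$ case.

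Second, you invoke Lemma~\ref{lemuu'} as the crucial packaging tool, but the paper's own proof of Lemma~\ref{lemKlo2} does not use it. The paper parametrizes the stationary locus via the simpler equivalence \eqref{eqxx2}, writes $x=\pm(b\bar a)_{1/2}+2^{s/2-1}t$ (resp.\ $+2^{(s-3)/2}t$ for $s$ odd) with $(b\bar a)_{1/2}$ the \emph{exact} $2$-adic square root, and expands $f(x)=ax+b\bar x$ to second order in $t$ using the displayed congruence at the start of that proof. Because $(b\bar a)_{1/2}$ is an exact root, the linear coefficient $a-b/x_0^2$ vanishes identically, so no approximation-error control of the type supplied by Lemma~\ref{lemuu'} is needed; the Gauss sum over $t\bmod 4$ (resp.\ $t\bmod 8$) then appears directly from the surviving quadratic term. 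Lemma~\ref{lemuu'} is reserved for the later Lemma~\ref{lemcs2}, where one genuinely needs to compare $(u+2^kt)_{1/2}$ for nearby $u$'s. Your route can likely be made to work, but it re-derives the $2$-adic stationary-phase reduction by hand, is heavier than needed, and as written does not correctly source the $\tau$-factor in the odd case.
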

\begin{proof}
We begin by noting the congruence
\[ \frac{b}{x+p^kt}\equiv \frac bx-\frac b{x^2}p^kt+\frac{b}{x^3}p^{2k}t^2\pmod{p^{3k}}, \]
which is valid for all $k\geqslant 1$ and irrespective of the value of $p$ (including $p=2$). Thus, $f(x)=ax+b\bar{x}$ satisfies
\[ f(x_0+p^kt)\equiv f(x_0)+\Bigg(a-\frac{b}{x_0^2}\Bigg)\cdot p^kt+\frac{b}{x_0^3}\cdot p^{2k}t^2\pmod{p^{3k}}. \]

For $s$ even, we have by the standard stationary phase argument (see \cite[Lemmas 12.2 \& 12.3]{IK04}) that
\[ S(a,b;2^s)=\ssum_{\substack{x\bmod 2^s\\a-b/x^2\equiv 0\bmod{2^{s/2}}}}e\Bigg(\frac{ax+b\bar{x}}{2^s}\Bigg). \]
In particular, for $s\geqslant 6$ even,
\[ S(m,n;2^s)=0\quad\text{unless}\quad a\equiv b\bmod{8}, \]
while in the case $a\equiv b\bmod 8$ the stationary phase congruence $a-b/x^2\equiv 0\pmod{2^{s/2}}$
is equivalent to $x\equiv\pm (b\bar{a})_{1/2}\pmod{2^{s/2-1}}$ and
\begin{align*}
S(a,b;2^s)&=\sum_{\pm}\sum_{t\bmod 2^{s/2+1}}e\Bigg(\frac{f\l(\pm (b\bar{a})_{1/2}+2^{s/2-1}t\r)}{2^s}\Bigg)\\
&=2^{s/2-1}\sum_{\pm}e\Bigg(\frac{\pm 2(ab)_{1/2}}{2^s}\Bigg)\sum_{t\bmod 4}e\Bigg(\frac{\pm a_{1/2}^3\overline{b_{1/2}}t^2}4\Bigg)\\
&=2^{(s+1)/2}\sum_{\pm}\tau(\pm(ab)_{1/2},2^s)e\Bigg(\frac{\pm 2(ab)_{1/2}}{2^s}\Bigg).
\end{align*}

For $s\geqslant 7$ odd, the same argument shows again that $S(a,b;2^s)=0$ unless $a\equiv b\bmod 8$, in which case $S(a,b;2^s)$ equals
\begin{align*}
\sum_{\substack{x\bmod 2^s\\a-b/x^2\equiv 0\bmod{2^{(s-1)/2}}}}e\Bigg(\frac{ax+b\bar{x}}{2^s}\Bigg)
&=\sum_{\pm}\sum_{t\bmod 2^{(s+1)/2+1}}e\Bigg(\frac{f\left(\pm (b\bar{a})_{1/2}+2^{(s-1)/2-1}t\right)}{2^s}\Bigg)\\
&=2^{(s+1)/2-2}\sum_{\pm}e\Bigg(\frac{\pm 2(ab)_{1/2}}{2^s}\Bigg)\sum_{t\bmod 8}e\Bigg(\frac{\pm a_{1/2}^3\overline{b_{1/2}}t^2}8\Bigg)\\
&=2^{(s+1)/2}\sum_{\pm}\tau(\pm (ab)_{1/2},2^s)e\Bigg(\frac{\pm 2(ab)_{1/2}}{2^s}\Bigg).
\end{align*}
This completes the proof.
\end{proof}

\subsubsection{Estimation of complete sums}
For $s\ge 2$ and $\epsilon\in \{\pm1\}$, we will need estimates on the sum
\[
\mathscr{S}^{\epsilon}_d(h,k_1,k_2,2^s):=\ssum_{m\ppmod {2^s}}S^{\epsilon}(k_1,md,2^s)\ol{S^{\epsilon}(k_2,md,2^s)}e\l(-\frac{hm}{2^s}\r),
\]
where
\begin{align}
S^{\epsilon}(m,n,2^s)=
\begin{cases}
2^{(s+1)/2}\tau\l(\epsilon(mn)_{1/2}, 2^s\r)e\l(\frac{2\epsilon(ab)_{1/2}}{2^s}\r),& m\equiv n \pmod 8,\\
0,& \text{otherwise}.
\end{cases}
\end{align}
\begin{lemma}\label{lemcs2}
For $2\nmid k_1k_2d$, $s\ge2$, and $\epsilon\in \{\pm1\}$, we have
\[
\mathscr{S}^{\epsilon}_d(h,k_1,k_2,2^s)\ll 2^{\frac{3s}{2}}\l(k_1-k_2,h,2^s\r)^{\frac12}\delta_{(k_1-k_2, 2^s)\mid  8h}.
\]
\end{lemma}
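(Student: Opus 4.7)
The plan is to convert the inner sum over $m$ into a quadratic Gauss sum in a single variable whose size and vanishing can be read off explicitly. First I would note that $S^{\epsilon}(k_1,md,2^s)\ol{S^{\epsilon}(k_2,md,2^s)}$ vanishes unless $k_1\equiv md\equiv k_2\pmod 8$, so when $k_1\not\equiv k_2\pmod 8$ we have $\mathscr{S}^{\epsilon}_d=0$ and the bound is trivial. Assume $k_1\equiv k_2\pmod 8$, and set $\alpha:=\ord_2(k_1-k_2)\geq 3$ and $\kappa:=k_1\ol{k_2}\pmod{2^s}$, so that $\kappa\equiv 1\pmod{2^\alpha}$. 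The identity $(\kappa_{1/2}-1)(\kappa_{1/2}+1)=\kappa-1$ together with $\ord_2(\kappa_{1/2}+1)=1$ (forced by $\kappa_{1/2}\equiv 1\pmod 4$) yields $\ord_2(\kappa_{1/2}-1)=\alpha-1$, and we write $\kappa_{1/2}-1=2^{\alpha-1}\nu$ with $\nu$ odd. The degenerate case $\alpha\geq s$, where the two square roots coincide modulo $2^s$, is dispatched separately by a direct calculation.

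The key substitution is to parametrize the valid residues $m\pmod{2^s}$ by $x:=(k_2md)_{1/2}\equiv 1\pmod 4$ modulo $2^{s-1}$, with inverse $m=x^2\ol{k_2d}$. Since all three relevant square roots are $\equiv 1\pmod 4$, we have $(k_1md)_{1/2}=\kappa_{1/2}x$, so the phase difference becomes the linear quantity $(\kappa_{1/2}-1)x=2^{\alpha-1}\nu x$. The $\tau$-factors depend only on the fixed class $m\bmod 8$ and have unit modulus, so collecting them into a constant $C$ of modulus $2^{s+1}$,
\[
\mathscr{S}^{\epsilon}_d(h,k_1,k_2,2^s)=C\sum_{\substack{x\pmod{2^{s-1}}\\x\equiv 1\pmod 4}}e\!\l(\frac{\epsilon\,2^\alpha\nu x-h\ol{k_2d}\,x^2}{2^s}\r).
\]
Substituting $x=1+4y$ with $y\pmod{2^{s-3}}$ recasts this as a quadratic exponential sum $T$ in $y$ whose linear coefficient is $8(\epsilon\,2^{\alpha-1}\nu-h\ol{k_2d})$ and whose quadratic coefficient is $-16h\ol{k_2d}$, both taken modulo $2^s$.

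Opening $|T|^2$ with $z=y_1-y_2$ and interchanging orders, orthogonality in $y_2$ pins $z$ to a subgroup of $\mathbb{Z}/2^{s-3}\mathbb{Z}$ of index $2^{s-5-\ord_2(h)}$, contributing a factor $2^{s-3}$. In the parameter range $\ord_2(h)\leq s-6$ the quadratic part vanishes identically on this subgroup modulo $2^s$, leaving a linear character sum in $z$ that is non-zero only when the linear coefficient is divisible by a sufficient power of $2$. A case analysis over the relative sizes of $\alpha-1$ and $\ord_2(h)$ shows this forces the balanced regime $\alpha=\ord_2(h)+1$, together with an additional mod $4$ compatibility between $\nu$ and $h/2^{\ord_2(h)}\cdot\ol{k_2d}$; in this case $|T|^2\ll 2^{s-1+\ord_2(h)}$, and in all other cases $T=0$. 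Since $T\neq 0$ implies $\ord_2(k_1-k_2)\leq 3+\ord_2(h)$, i.e., $(k_1-k_2,2^s)\mid 8h$, and $\min(\alpha,\ord_2(h),s)=\ord_2(h)$ in the same regime, we conclude $|\mathscr{S}^{\epsilon}_d|\ll 2^{s+1}|T|\ll 2^{(3s+1+\ord_2(h))/2}$, matching the claim $\ll 2^{3s/2}(k_1-k_2,h,2^s)^{1/2}$ on the stated support. The narrow boundary range $\ord_2(h)>s-6$ and the degenerate case $\alpha\geq s$ are handled by the trivial bound $|\mathscr{S}^{\epsilon}_d|\leq 2^{2s-2}$, which is within a constant factor of the target there. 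The principal technical challenge is the $2$-adic bookkeeping in the penultimate step; a secondary point to verify is the multiplicative identity $(k_1md)_{1/2}=\kappa_{1/2}x$, which rests on the consistent $\equiv 1\pmod 4$ normalization of all three square roots.
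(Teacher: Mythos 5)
Your proof takes a genuinely different route from the paper. The paper keeps the sum in the original variable $m$, performs a $2$-adic van der Corput splitting $m=m_1+2^{s-t}m_2$ at a scale $t$ that depends on $r=\ord_2(k_1-k_2,2^s)$, and uses the $2$-adic Taylor expansion of $(u+2^kt)_{1/2}$ (the paper's Lemma on square-root increments) to linearize the phase in $m_2$; the geometric $m_2$-sum then produces a congruence in $m_1$ which is counted directly. Your approach instead reparametrizes $m$ globally by $x=(k_2md)_{1/2}$, exploiting the multiplicativity $(\kappa x^2)_{1/2}=\kappa_{1/2}x$ (with the consistent $\equiv 1\bmod 4$ normalization, a point you correctly flag as needing verification, and which indeed checks out since all three square roots are $\equiv 1\bmod 4$). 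This turns the complete sum into a single quadratic Gauss sum, which you then bound by opening $|T|^2$. The two strategies are of comparable depth; yours is arguably more transparent, replacing the Taylor-expansion lemma by the simpler multiplicativity of $(\cdot)_{1/2}$, and the $|T|^2$ expansion plays the same role as the paper's scale-splitting. The main case analysis ($\alpha-1$ vs.\ $\ord_2(h)$) and the resulting bound $|T|^2\ll 2^{s-1+\ord_2(h)}$ are correct, and you correctly recover both the magnitude bound and the divisibility condition $(k_1-k_2,2^s)\mid 8h$.

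There is, however, a genuine gap in the final paragraph. You claim that in the boundary range $\ord_2(h)>s-6$, and in the degenerate case $\alpha\geq s$, the trivial bound $|\mathscr{S}^{\epsilon}_d|\leq 2^{2s-2}$ is within a constant factor of the target. This is false in general: the target is $2^{3s/2}\cdot 2^{\min(\alpha,\ord_2(h),s)/2}\,\delta_{(k_1-k_2,2^s)\mid 8h}$, and when $\alpha$ is small (say $\alpha=3$) while $\ord_2(h)$ is near $s$, the target is only $\approx 2^{3s/2+3/2}$, which is $2^{s/2-O(1)}$ times smaller than the trivial bound. Similarly, when $\alpha\geq s$ and $\ord_2(h)<s-3$ the Kronecker delta vanishes, so the target is zero and the trivial bound cannot suffice. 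Both cases can be salvaged, but only by continuing your own analysis: the $z$-sum (or a direct Gauss-sum argument in the degenerate case) shows $T=0$ precisely when these mismatches of $2$-adic valuations occur, and in the residual sub-cases where $T\neq 0$ one has $\min(\alpha,\ord_2(h))\geq s-O(1)$, at which point the trivial bound really is within a constant. As written, though, the appeal to the trivial bound alone does not close the argument; the vanishing must be established.
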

\begin{proof}
Since the estimate is trivial for small $s$, we may assume that $s\geqslant 4$.
We need to estimate
\begin{align*}
\mathscr{S}^{\epsilon}_d(h,k_1,k_2,2^s)=&2^{(s+1)}\sum_{\substack{m\ppmod{2^{s}}\\ m\equiv k_1\equiv k_2\ppmod 8}}\tau\l(\epsilon(mk_1)_{1/2}, 2^s\r)\ol{\tau\l(\epsilon(mk_2)_{1/2}, 2^s\r)}\\
&\times e\B(\frac{2\epsilon\l((mk_1)_{1/2}-(mk_2)_{1/2}\r)-\ol{d}hm}{2^s}\B).
\end{align*}
We write $(k_1-k_2, 2^s)=2^r$ with $r\ge3$, $t=\lfloor (s+r)/2 \rfloor-3$, and $m=m_1+2^{s-t}m_2$. Recall that $\tau(x,2^s)$ depends only on $x$ modulo $8$.
Applying Lemma \ref{lemuu'} with $\lambda=r-1$ and $k=s-t\ge 3$, we have
\begin{align*}
\mathscr{S}^{\epsilon}_d(h,k_1,k_2,2^s)=&2^{(s+1)}\sum_{\substack{m_1\ppmod{2^{s-t}}\\ m_1\equiv k_1\equiv k_2\ppmod 8}}\tau\l(\epsilon(m_1k_1)_{1/2}, 2^s\r)\ol{\tau\l(\epsilon(m_1k_2)_{1/2}, 2^s\r)}\\
&\times e\B(\frac{2\epsilon\l((m_1k_1)_{1/2}-(m_1k_2)_{1/2}\r)-\ol{d}hm_1}{2^s}\B)\\
&\times
\sum_{m_2\ppmod {2^t}} e\B(\frac{\epsilon m_2\l(k_1\ol{(m_1k_1)_{1/2}}-k_2\ol{(m_1k_2)_{1/2}}\r)-\ol{d}hm_2}{2^t}\B),
\end{align*}
observing $2k+\lambda-3=2s+r-2t-4>s$.
The $m_2$-sum vanishes unless
\begin{align}\label{eqsm0}
k_1\ol{(m_1k_1)_{1/2}}-k_2\ol{(m_1k_2)_{1/2}}-\epsilon\ol{d}h\equiv0 \pmod{2^t},
\end{align}
in which case it is equal to $2^t$.
Through \eqref{eqxx2}, the condition $(k_1-k_2, 2^s)=2^r$ yields
\begin{align}\label{eqk1k2}
\l(k_1\ol{(m_1k_1)_{1/2}}-k_2\ol{(m_1k_2)_{1/2}}, 2^s\r)=2^{r-1}.
\end{align}

For $3\le r\le s-6$, we have $r-1<t$,
and thus $2^{r-1}\parallel h$, derived from \eqref{eqsm0} and \eqref{eqk1k2}.
Writing $h=h'2^{r-1}$, we reduce \eqref{eqsm0} to
\begin{align}\label{eqsm1}
2^{-(r-1)}\l(k_1\ol{(m_1k_1)_{1/2}}-k_2\ol{(m_1k_2)_{1/2}}\r)\equiv\epsilon\ol{d}h' \pmod{2^{t-r+1}}.
\end{align}
By introducing a fixed $u\equiv k_1\bmod 8$, we note
\[
k_1\ol{(m_1k_1)_{1/2}}-k_2\ol{(m_1k_2)_{1/2}}\in \left\{\pm \l(\frac{(k_1u)_{1/2}}{(m_1u)_{1/2}}-\frac{(k_2u)_{1/2}}{(m_1u)_{1/2}}\r)\right\},
\]
and then reduce \eqref{eqsm1} to
\[
\ol{d}h'(m_1u)_{1/2}\equiv \pm 2^{-r+1}\l((k_1u)_{1/2}- (k_2u)_{1/2}\r) \pmod{2^{t-r+1}},
\]
whence, there are at most $O(1)$ solutions $m_1 \bmod 2^{t-r+1}$.
Therefore, there are at most $O(2^{s-2t+r-1})$ contributing values of $m_1$, whence
\[
\left|\mathscr{S}^{\epsilon}_d(h,k_1,k_2,2^s)\right|\ll 2^{s+1}2^{s-2t+r-1}2^t\ll 2^{\frac{3s}{2}}2^{\frac r2},
\]
in agreement with the statement of the Theorem.

For large $r\ge s-5$, we have $r-3\le t\le r-1$. Combining \eqref{eqsm0} and \eqref{eqk1k2} yields $2^t\mid h$, and then a trivial estimate shows
\[
\left|\mathscr{S}^{\epsilon}_d(h,k_1,k_2,2^s)\right|\ll 2^{2s}\ll 2^{\frac{3s}{2}}2^{\frac t2}\ll 2^{\frac{3s}{2}}\l(k_1-k_2,h,2^s\r)^{\frac12}.
\]
This completes the proof of the lemma.
\end{proof}

\subsection{Proof of Theorem \ref{lemSumMK}}
Since Lemmas \ref{lemKlo2} \& \ref{lemcs2} cover the $p=2$ case, unsettled in Lemma 22 and Proposition 23 of \cite{BM2015}, the argument of \cite{BM2015} then applies equally to these $q$ containing extremely high powers of $2$. This removes the condition $(q/s,2)=1$.

We next establish \eqref{eqSumMK} while retaining the coprimality condition $(m,q)=1$. The proof of this part largely follows \cite[Theorem 5]{BM2015}, with necessary adjustments to \cite[Section 9]{BM2015} detailed below.

To account for $\lambda(k)$, we decouple it from $b_1^\epsilon(k)$ and  adjust the definition \cite[(9.9)]{BM2015} to
\[
b_1(k)=\lambda(k)\sum_{\epsilon\in\{\pm1\}^\varrho}b_1^\epsilon(k),
\]
where $b_1^\epsilon(k)$ retains its original definition in \cite[(9.9)]{BM2015} but isolates $\lambda(k)$ as a separate factor.

The primary modification occurs at \cite[(9.12)]{BM2015}, which becomes
\begin{align*}
\Sigma&\ll r^\ve Hr_2^2\sum_{\epsilon\in\{\pm1\}^\varrho}\sum_{(m,q)=1}W\l(\frac mM\r)\sum_{\substack{K\le k_1,k_2\le 2K\\ k_1\equiv k_2 \bmod {Hr_2}\\ (k_1k_2,r_1^{\#})=1}}\lambda(k_1)\ol{\lambda(k_2)}b_1^\epsilon(k)\ol{b_1^\epsilon(k)},
\end{align*}
with
$r=r_1r_2$, $(r_1,r_2)=1$, $r_1^{\#}$ denoting the squarefull part of $r_1$, and $\text{rad}(r_1^{\#})$ its squarefree kernel, while $H$ is a parameter satisfying
\[
H\mid \frac{r_1^{\#}}{\text{rad}(r_1^{\#})}.
\]
Diagonal terms ($k_1=k_2$) contribute
\[
\ll  (rM)^\ve Hr_2^2Mr_1\sum_{K\le k\le 2K}|\lambda(k)|^2\ll (rKM)^\ve Hr_2^2MKr_1.
\]
By symmetry in $k_1$ and $k_2$, we apply
\[
\lambda(k_1)\ol{\lambda(k_2)}\ll |\lambda(k_1)|^2+|\lambda(k_2)|^2
\]
to off-diagonal terms. Employing M\"obius inversion to eliminate $(m,q)=1$ yields
\begin{align}\label{eq1.3}
\Sigma&\ll  (rKM)^\ve Hr_2^2MKr_1+r^\ve Hr_2^2 \sum_{d\mid q}\sum_{\substack{K\le k_1\neq k_2\le 2K\\ k_1\equiv k_2 \bmod {Hr_2}\\ (k_1k_2,r_1^{\#})=1}}|\lambda(k_1)|^2 \B|\sum_{\epsilon\in\{\pm1\}^\varrho}\Sigma_d^\epsilon(k_1,k_2,r_1)\B|,
\end{align}
where $\Sigma_d^\epsilon(k_1,k_2,r_1)$ remains as defined in \cite{BM2015}.

The second modification concerns the bound for $\sum_{\epsilon\in\{\pm1\}^\varrho}\Sigma_d^\epsilon(k_1,k_2,r_1)$. From the last formula in \cite[p.502]{BM2015}, it is shown that
\begin{align*}
\sum_{\epsilon\in\{\pm1\}^\varrho}\Sigma_d^\epsilon(k_1,k_2,r_1)\ll \sum_{\rho\mid \big(\frac{r_1}{r_1^{\#}},d\big)}\frac{(k_1,\rho)(k_2,\rho)}{\rho^{\frac12}} \frac{M}{d}r_1^{\frac12+\ve}(k_1-k_2,r_1)^{\frac12}\B(1+\frac{dr_1}{M\Big(k_1-k_2,\frac{r_1^{\#}}{\text{rad}(r_1^{\#})}\Big)\rho}\B).
\end{align*}
Using
\begin{align*}
(k_1,\rho)(k_2,\rho)=(k_1k_2,\rho)(k_1,k_2,\rho)\le (k_1k_2,\rho)(k_1-k_2,\rho)
\end{align*}
and $(\rho,H)=1$ since $(\rho, r_1^{\#})=1$, we have
\begin{align*}
\sum_{\epsilon\in\{\pm1\}^\varrho}\Sigma_d^\epsilon(k_1,k_2,r_1)\ll \sum_{\substack{\rho\mid (d,r_1)\\ (\rho,H)=1}}\frac{(k_1-k_2,\rho)}{\rho^{\frac12}} \B(Mr_1^{\frac12+\ve}(k_1-k_2,r_1)^{\frac12}+r_1^{\frac32+\ve}\frac{(k_1-k_2,r_1)^{\frac12}}{\Big(k_1-k_2,\frac{r_1^{\#}}{\text{rad}(r_1^{\#})}\Big)}\B).
\end{align*}
Noting $\rho\mid r_1$ and $(\rho,Hr_2)=1$,
we have
\begin{align*}
&\sum_{K\le k\le 2K}|\lambda(k)|^2 \sum_{0\neq l\ll \frac{K}{Hr_2}}\B|\sum_{\epsilon\in\{\pm1\}^\varrho}\Sigma_d^\epsilon(k,k+lHr_2,r_1)\B|\\
&\ll (dr_1)^\ve\sum_{K\le k\le 2K}|\lambda(k)|^2 \sum_{0\neq l\ll \frac{K}{Hr_2}} \sum_{\substack{\rho\mid (d,r_1)\\ (\rho,H)=1}}\frac{(l,\rho)}{\rho^{\frac12}}
\B(Mr_1^{\frac12}(l,r_1)^{\frac12}H^{\frac12}+r_1^{\frac32}\frac{(l,r_1)^{\frac12}H^{\frac12}}{H}\B)\\
&\ll (dr_1)^\ve\sum_{K\le k\le 2K}|\lambda(k)|^2 \sum_{0\neq l\ll \frac{K}{Hr_2}}
\B(Mr_1^{\frac12}(l,r_1)H^{\frac12}+r_1^{\frac32}\frac{(l,r_1)}{H^{\frac12}}\B)\\
&\ll (dr_1)^\ve\frac{K^2}{Hr_2}MH^{\frac12}r_1^{\frac12}+(dr_1)^\ve\frac{K^2}{Hr_2}\frac{r_1^{\frac32}}{H^{\frac12}}.
\end{align*}
Substituting this into \eqref{eq1.3} replicates the bound \cite[(9.14)]{BM2015} for $\Sigma$. The remainder of the proof aligns with \cite{BM2015}, yielding \eqref{eqSumMK} with the coprimality condition $(m,q)=1$ still present.

We finally eliminate $(m,q)=1$ through a detailed analysis of Kloosterman sums.
Let $d=(m,q)$, and for $p\mid d$ write $q=p^j q_p$. The twisted multiplicativity of Kloosterman sums gives
\[
\K(ckm;q)=\K(c\ol{q}_p^2km;p^j)\K(c\ol{p}^{2j}km;q_p).
\]
If $2\mid d$, Lemma \ref{lemKlo2} shows that the Kloosterman sum vanishes unless $q$ is divisible by only a small power of $2$ (not exceeding $2^5$). Since a small power of $2$ can be removed at no cost, we may assume $(d,2)=1$ in the following analysis.
For an odd prime $p\mid d$,
the Kloosterman sum $\K(c\ol{q}_p^2km;p^j)$ vanishes unless $j=1$, in which case, it reduces to a Ramanujan sum. In other words,
\begin{align*}
\K(c\ol{q}_p^2km;p^j)=\begin{cases}
-p^{-\frac12},& \text{if}\ j=1;\\
0,& \text{otherwise}.
\end{cases}
\end{align*}
Thus $\K(ckm;q)$ vanishes unless $q=dq_d$ with $d$ square-free, in which case,
\[
\K(ckm;q)=\frac{\mu(d)}{d^{\frac12}}\K(c\ol{d}^2km;q_d).
\]
Substituting this into \eqref{eqSumKlo} shows
\begin{align*}
\sum_{M\le m\le 2M}\B|\sum_{K\le k\le 2K}\lambda(k)\K(ckm;q)\B|^2&=\sum_{d\mid q}\frac{\mu^2(d)}{d}\sum_{\substack{M/d\le m\le 2M/d\\ (m,q_d)=1}}\B|\sum_{K\le k\le 2K}\lambda(k)\K(c\ol{d}km;q_d)\B|^2\\
&\ll (qKM)^\ve\|\lambda\|_2\B(Ms+\frac{KMs^{\frac12}}{q^{\frac12}}+\frac{Kq^{\frac12}}{s^{\frac12}}\B),
\end{align*}
which establishes the lemma.

\section{Bilinear forms for moduli with large prime factors}
From this section, we start our proof of Theorem \ref{thmBK}.

\subsection{Initial treatment for Theorem \ref{thmBK} with ``shifted by $ab$'' trick}
We expand the square in \eqref{eqSumKlo} to obtain
\begin{align}\label{eqSum}
\sum_{N\le n\le 2N}\B|\sum_{M\le m\le 2M}\alpha_m\K(cmn;q)\B|^2= \sum_{d\mid q}\mT_d(\bm{\alpha},1_{\mN}),
\end{align}
where
\begin{align*}
\mT_d(\bm{\alpha},1_{\mN})=\mathop{\sum\sum}_{\substack{M\le m_1,m_2\le 2M\\ (m_1-m_2,q)=d}}\alpha_{m_1}\ol{\alpha}_{m_2}\sum_{n\in\mN}\K(cm_1n;q)\K(cm_2n;q).
\end{align*}
A trivial estimate with the inequality $\alpha_{m_1}\ol{\alpha}_{m_2}\ll |\alpha_{m_1}|^2+|\alpha_{m_2}|^2$ shows
\begin{align}\label{eqtbT}
\mT_d(\bm{\alpha},1_{\mN})\ll \l(1+\frac{M}d\r)N\|\bm{\alpha}\|_2^2,
\end{align}
which will be employed when $(d,q^\star)>1$. For the remaining case $d\mid \rho$, we apply Vinogradov's ``shifted by $ab$'' summation approach and analyze it through three distinct scenarios:
\begin{itemize}
  \item $q^{\star}=p$ is a prime;
  \item $q^{\star}=p_1p_2$ is a product of two distinct primes;
  \item $q^{\star}=p^2$ is a square of a prime.
\end{itemize}

For given $A,B\ge1$ satisfying
\[
2B<q,\quad AB\le N,\quad AM<q,
\]
we have
\begin{align*}
\mT_d(\bm{\alpha},1_{\mN})=&\frac{1}{A^*B}\mathop{\ssum\sum}_{\substack{A\le a\le 2A\\ B\le b\le 2B}}\mathop{\sum\sum}_{\substack{M\le m_1,m_2\le 2M\\ (m_1-m_2,q)=d}}\alpha_{m_1}\ol{\alpha}_{m_2}\\
&\times\sum_{n+ab\in\mN}\K(acm_1(\ol{a}n+b);q)\K(acm_2(\ol{a}n+b);q),
\end{align*}
where $A^*$ counts the integers $a\in[A,2A]$ coprime to $q$. By performing the variable substitutions
\[
am_1\rightarrow l_1,\quad am_2\rightarrow l_2,\quad \ol{a}n\rightarrow r
\]
and introducing a smooth weight function $W(x)\ge0$, which has compact support in $[1/2,5]$ with $W^{j}(x)\ll 1$ for $j\ge0$ and takes the value of $1$ in $[1,4]$,
we obtain via the method of \cite[p. 116]{FM98} that
\begin{align*}
\mT_d(\bm{\alpha},1_{\mN})\ll \frac{q^\ve}{AB}\sum_{r\ppmod q}&\mathop{\sum\sum}_{(l_1-l_2,q)=d}\nu(r,l_1,l_2)W\l(\frac{l_1}{AM}\r)W\l(\frac{l_2}{AM}\r)\\
&\times\bigg|\sum_{B\le b\le 2B}\gamma_b\K(cl_1(r+b);q)\K(cl_2(r+b);q)\bigg|,
\end{align*}
where the coefficient $\nu(r,l_1,l_2)$ aggregates contributions from
\[
\nu(r,l_1,l_2)=\mathop{\ssum\sum\cdots\sum}_{\substack{A\le a\le 2A,\ M\le m_1,m_2\le 2M,\ n\in\mN'\\ am_1=l_1,\ am_2=l_2,\ \ol{a}n\equiv r\ppmod q}}|\alpha_{m_1}||\alpha_{m_2}|.
\]
Here $\mN'\supset\mN$ denotes an extended interval of length $2N$, and $\gamma_b$ are some complex coefficients with $|\gamma_b|\le1$. The following bounds hold
\[
\sum_r\mathop{\sum\sum}_{(l_1-l_2,q)=d}\nu(r,l_1,l_2)W\l(\frac{l_1}{AM}\r)W\l(\frac{l_2}{AM}\r)\ll AN\|\bm{\alpha}\|_1^2\ll AMN\|\bm{\alpha}\|^2_2
\]
and
\begin{align*}
\sum_r\mathop{\sum\sum}_{(l_1-l_2,q)=d}\nu(r,l_1,l_2)^2W\l(\frac{l_1}{AM}\r)W\l(\frac{l_2}{AM}\r)&\ll \mathop{\sum\cdots\sum}_{\substack{am_1=a'm_1'\\ am_2=a'm_2'\\ an\equiv a'n'\ppmod q}}\left|\alpha_{m_1}\alpha_{m'_1}\alpha_{m_2}\alpha_{m'_2}\right|\\
&\ll q^\ve\sum_{a,m_1,m_2,n}|\alpha_{m_1}|^2|\alpha_{m_2}|^2\ll q^\ve AN\|\bm{\alpha}\|_2^4,
\end{align*}
since the cross terms satisfy
\[
\left|\alpha_{m_1}\alpha_{m'_1}\alpha_{m_2}\alpha_{m'_2}\right|\le \l(|\alpha_{m_1}|^2+|\alpha_{m'_1}|^2\r)\l(|\alpha_{m_2}|^2+|\alpha_{m'_2}|^2\r)
\]
and $m_i, m_i'$ are determined by each other up to $O(q^\ve)$ possibilities.
The congruence $a'n\equiv an'$ restricts $n'$ to at most two values in $\mN'$ due to its length $\le 2q$.
Applying the H\"{o}lder inequality yields
\begin{align}\label{eqT}
\mT_d(\bm{\alpha},1_{\mN})&\ll \frac{q^\ve}{AB}(AN)^{\frac34}M^{\frac12}\|\bm{\alpha}\|^{2}_2\bigg(\sum_{\bm{b}}\left|\Sigma^{d}(\bm{b},AM;q)\right|\bigg)^{\frac14},
\end{align}
where the key exponential sum is
\begin{align}\label{eqSigma}
\Sigma^{d}(\bm{b},AM;q)=\sum_{r\ppmod q}\mathop{\sum\sum}_{(l_1-l_2,q)=d}W\l(\frac{l_1}{AM}\r)W\l(\frac{l_2}{AM}\r)\prod_{i=1}^2\prod_{j=1}^4\K(cl_i(r+b_j);q).
\end{align}

\begin{remark}
The Vinogradov's ``shifted by $ab$'' technique employed here constitutes a refinement of methods established in prior research \cite{BFK+17a} and \cite{FM98}. A novel element lies in the incorporation of a smooth test function, which facilitates the treatment of the exponential sum via Poisson summation formula rather than the discrete Plancherel identity in subsequent section. This approach enables potential savings in the frequency sum estimation.
\end{remark}

\subsection{Reduction to  multiplicative completed exponential sums}
Let $\mathbb{B}$ denote the set of all $4$-tuples satisfying
\[
\mathbb{B}=\{b=(b_1,b_2,b_3,b_4): 1\le b_i\le B\}.
\]
For a prime $p$, define $\mV^\Delta$ as the affine variety of $4$-tuples
\[
\bm{b}=(b_1,b_2,b_3,b_4)\in \bbm{F}_p^4
\]
characterized by the condition that for each index $i\in\{1,2,3,4\}$, the cardinality
\[
|\{j=1,2,3,4\ |\ b_j=b_i\}|
\]
must be even. We further define $\mathbb{B}^\Delta_p$ as the subset of integer tuples $\bm{b}\in\mathbb{B}$ satisfying
\[
\bm{b}\ppmod p\in \mV^\Delta.
\]

A direct computation yields the trivial bound
\begin{align}\label{eqTBSigma}
\Sigma^{d}(\bm{b},AM;q)\ll A^2M^2 q
\end{align}
for arbitrary $\bm{b}\in\mathbb{B}$ and divisors $d\mid q$.

To obtain refined estimates for $\Sigma^{d}(\bm{b},AM;q)$, we employ distinct analytical approaches. The primary method involves evaluating the $r$-summation  for fixed $(l_1,l_2)$ pairs. For $\bm{l}=(l_1,l_2)$, we define the completed exponential sum
\[
\mR(\bm{b},\bm{l};q)=\sum_{r\ppmod q}\prod_{i=1}^2\prod_{j=1}^4\K(l_i(r+b_j);q).
\]
Through the twisted multiplicativity of Kloosterman sums and application of the Chinese remainder theorem, $\mR(\bm{b},\bm{l};q)$ inherits the following multiplicative property.
\begin{lemma}[Multiplicative property]\label{lemmMR}
Let $q=q_1q_2$ with $(q_1,q_2)=1$. Then for arbitrary $\bm{b}$ and $\bm{l}$, we have
\[
\mR(\bm{b},\bm{l};q)=\mR(\bm{b},\ol{q}^2_2\bm{l};q_1)\mR(\bm{b},\ol{q}^2_1\bm{l};q_2).
\]
\end{lemma}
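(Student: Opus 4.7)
The plan is to combine two standard ingredients: twisted multiplicativity of normalized Kloosterman sums with coprime moduli, and the Chinese remainder theorem (CRT) decomposition of the outer sum over $r \pmod{q_1q_2}$.

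First, I would recall the classical identity
\[
\K(a; q_1 q_2) = \K(a\ol{q}_2^{\,2}; q_1)\,\K(a\ol{q}_1^{\,2}; q_2),
\]
valid for any integer $a$ when $(q_1,q_2)=1$, where $\ol{q}_i$ denotes the inverse of $q_i$ modulo $q_{3-i}$. This follows from the unnormalized identity $S(a,1;q_1q_2) = S(a\ol{q}_2^{\,2},1;q_1)\,S(a\ol{q}_1^{\,2},1;q_2)$, itself obtained by writing $x \equiv x_1 q_2\ol{q}_2 + x_2 q_1 \ol{q}_1 \pmod{q_1 q_2}$ via CRT in the definition of $S(a,1;q_1q_2)$, and then normalizing by $\sqrt{q_1q_2}$.

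Next, I would apply this factorization to each of the eight Kloosterman factors appearing in $\mR(\bm{b},\bm{l};q_1q_2)$, with $a=l_i(r+b_j)$. Since $\ol{q}_2^{\,2}\cdot l_i(r+b_j) = (\ol{q}_2^{\,2} l_i)(r+b_j)$ as residues modulo $q_1$ (and symmetrically modulo $q_2$), this gives
\[
\prod_{i=1}^{2}\prod_{j=1}^{4}\K(l_i(r+b_j); q_1q_2)
= \prod_{i,j}\K(\ol{q}_2^{\,2} l_i(r+b_j); q_1)\cdot \prod_{i,j}\K(\ol{q}_1^{\,2} l_i(r+b_j); q_2).
\]

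Finally, I would parametrize $r$ via CRT as $r \equiv r_1 q_2\ol{q}_2 + r_2 q_1\ol{q}_1 \pmod{q_1q_2}$, so that $r\equiv r_1 \pmod{q_1}$, $r\equiv r_2 \pmod{q_2}$, and $(r_1,r_2)$ ranges over $\mathbb{Z}/q_1\mathbb{Z}\times \mathbb{Z}/q_2\mathbb{Z}$ as $r$ ranges over $\mathbb{Z}/q_1q_2\mathbb{Z}$. The first product above depends on $r$ only through $r_1$, and the second only through $r_2$, so the $r$-sum factors as the product of the $r_1$-sum and the $r_2$-sum, which by definition are exactly $\mR(\bm{b},\ol{q}_2^{\,2}\bm{l};q_1)$ and $\mR(\bm{b},\ol{q}_1^{\,2}\bm{l};q_2)$.

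There is no genuine obstacle here; the only thing to watch is the bookkeeping of the twisting factors $\ol{q}_2^{\,2}$ and $\ol{q}_1^{\,2}$, ensuring that they are absorbed into $\bm{l}$ rather than getting attached to the shifts $r+b_j$ (which is legitimate because the Kloosterman kernel is linear in the argument $l_i(r+b_j)$).
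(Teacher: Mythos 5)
Your argument is correct and follows the same approach as the paper: apply twisted multiplicativity $\K(a;q_1q_2)=\K(a\ol{q}_2^2;q_1)\K(a\ol{q}_1^2;q_2)$ to each of the eight Kloosterman factors, then split the $r$-sum via the Chinese remainder theorem. You have simply made explicit the CRT parametrization of $r$ and the bookkeeping of where the twisting factors $\ol{q}_2^2,\ol{q}_1^2$ land, which the paper leaves implicit.
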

\begin{proof}
The twisted multiplicativity of Kloosterman sums implies the decomposition
\[
\K(l_i(r+b_j);q)=\K(l_i(r+b_j)\ol{q}_2^2;q_1)\K(l_i(r+b_j)\ol{q}_1^2;q_2).
\]
The claimed identity then follows from the Chinese remainder theorem.
\end{proof}

A direct computation yields the uniform trivial bound
\begin{align}\label{eqTBR}
\mR(\bm{b},\bm{l};q)\ll q,
\end{align}
valid for all $\bm{b}$ and $\bm{l}$.

\begin{lemma}\label{lemR1}
For tuples $\bm{b}\notin\mathbb{B}_p^\Delta$ and $\bm{l}=(l_1,l_2)$ satisfying $l_1\not\equiv l_2 \ (\bmod \ {p})$, we have
\begin{align}\label{eqR1}
\mR(\bm{b},\bm{l};p)\ll p^{\frac12}.
\end{align}
\end{lemma}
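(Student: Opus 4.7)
The plan is to interpret $\mR(\bm{b},\bm{l};p)$ cohomologically as a sum over $\mathbb{F}_p$ of the trace function of a tensor product of pulled-back Kloosterman sheaves, and then invoke Deligne's Riemann hypothesis together with Katz's theory of the monodromy of $\mathcal{K}\ell_2$. Each factor $\K(l_i(r+b_j);p)$ equals the trace at $r$ of the pullback $\phi_{i,j}^*\mathcal{K}\ell_2$ of the Kloosterman sheaf along the affine map $\phi_{i,j}(r)=l_i(r+b_j)$, so the full product is the trace function of the tensor product sheaf
\[
\mathcal{G}=\bigotimes_{i=1}^{2}\bigotimes_{j=1}^{4}\phi_{i,j}^*\mathcal{K}\ell_2,
\]
a middle-extension sheaf on $\mathbb{A}^1_{\mathbb{F}_p}$ that is pure of weight $0$, of rank $2^8$, and of absolutely bounded conductor. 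Standard cohomological bounds (Deligne; see Fouvry--Kowalski--Michel) then yield $\mR(\bm{b},\bm{l};p)\ll p^{1/2}$ with an absolute implied constant, provided $\mathcal{G}$ admits no trivial subsheaf as a representation of the geometric fundamental group.

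I would verify this non-triviality using Katz's classification of geometric isomorphisms between translates and scalings of $\mathcal{K}\ell_2$. Since $\mathcal{K}\ell_2$ is self-dual with geometric monodromy group $\mathrm{SL}_2$, the tensor product $\phi_{i,j}^*\mathcal{K}\ell_2\otimes\phi_{i',j'}^*\mathcal{K}\ell_2$ contains an invariant vector exactly when $\phi_{i,j}^*\mathcal{K}\ell_2\cong\phi_{i',j'}^*\mathcal{K}\ell_2$ geometrically; Katz's classification (together with the absence of Kummer-twist ambiguities for rank-$2$ Kloosterman in the generic setting) shows that this forces $l_i\equiv l_{i'}\pmod p$ and $b_j\equiv b_{j'}\pmod p$. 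Thus $l_1\not\equiv l_2\pmod p$ precludes any isomorphism across the two $i$-groups, while $\bm{b}\notin\mathbb{B}_p^\Delta$ means that the multiset $\{b_1,b_2,b_3,b_4\}\pmod p$ is not a disjoint union of pairs, so within at least one $i$-group some isomorphism block has odd multiplicity.

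A Goursat--Kolchin--Ribet style monodromy lemma then produces the needed non-triviality: the geometric monodromy of $\mathcal{G}$ acts through a product of copies of $\mathrm{SL}_2$, one for each isomorphism class among the eight factors, and the induced representation on $\mathcal{G}$ is the tensor product of the corresponding tensor powers of the standard representation. Since at least one such factor appears with an odd exponent, the resulting representation contains an odd tensor power of the standard $\mathrm{SL}_2$-representation and therefore has no invariants, completing the argument. The main obstacle, and the main technical content, lies in carrying out this last geometric analysis---verifying the classification of geometric isomorphisms of translated and scaled Kloosterman sheaves (including ruling out exceptional Kummer-twist cases when $l_1/l_2$ is a specific power) and applying the Goursat--Kolchin--Ribet criterion in a form strong enough to conclude absence of invariants. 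The remaining ingredients, namely the trace-function interpretation, the conductor bound, and Deligne's Riemann hypothesis, are standard and uniform in $p$.
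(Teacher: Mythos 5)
Your cohomological approach for the main case is essentially the content of \cite[Lemma~2.5]{KMS2017}, which is precisely what the paper cites here; so in substance your sketch reproduces the argument that the paper delegates to a reference. The interpretation of each factor as the trace function of a pulled-back Kloosterman sheaf, the Deligne weight bound with uniformly bounded conductor, the use of Katz's classification of geometric isomorphisms among translated and scaled copies of $\mathcal{K}\ell_2$, and the Goursat--Kolchin--Ribet step to rule out geometric invariants when some isomorphism class occurs with odd multiplicity are all the right ingredients, and your reading of $\bm{b}\notin\mathbb{B}_p^\Delta$ as ``some $b_j$ has odd multiplicity, hence some block has odd exponent'' is correct.

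There is, however, a genuine gap: your sheaf-theoretic setup silently assumes $l_1 l_2\not\equiv 0\pmod p$, since the maps $\phi_{i,j}(r)=l_i(r+b_j)$ must be non-constant (and land in $\mathbb{G}_m$) for $\phi_{i,j}^*\mathcal{K}\ell_2$ to be a rank-two sheaf with the expected ramification. The hypothesis of the lemma is only $l_1\not\equiv l_2\pmod p$, which allows one of $l_1,l_2$ to vanish modulo $p$; in that case the corresponding four factors degenerate to the constant $\K(0;p)=-p^{-1/2}$ (a Ramanujan sum), your tensor-product sheaf $\mathcal{G}$ no longer makes sense as stated, and Katz's classification no longer applies. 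The fix is elementary: when, say, $l_1\equiv 0\pmod p$, pull out the factor $(-p^{-1/2})^4=p^{-2}$ and estimate the remaining sum $\sum_r\prod_{j}\K(l_2(r+b_j);p)$ trivially by $p$ via Weil's bound, giving $\mR\ll p^{-1}\ll p^{1/2}$. This is exactly the extra step the paper supplies before invoking \cite[Lemma~2.5]{KMS2017}, and your proposal should include it.
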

\begin{proof}
When $l_1l_2\not\equiv0\ (\bmod \ {p})$, the result follows from \cite[Lemma 2.5]{KMS2017}. In the remaining cases where some $l_i$ vanishes in $\bbm{F}_p$, the Kloosterman sum reduces to a Ramanujan sum, such that
\[
\K(l_i(r+b_j);p)=-p^{-\frac12},
\]
The bound \eqref{eqR1} then follows by estimating the remaining terms trivially.
\end{proof}

The subsequent analysis requires more sophisticated techniques to extract savings from averaging over $l_1$ and $l_2$. By applying the Poisson summation formula to transform this average into completed sums, we obtain
\begin{align}\label{eqSS}
\Sigma^{d}(\bm{b},AM;q)\ll \frac{(AM)^2}{q^2}\sum_{\bm{h}}\left|\mathfrak{S}(\bm{b},\ol{c}\bm{h},d;q)\right|,
\end{align}
where the completed exponential sum is defined as
\begin{align}\label{eqdefS}
\mathfrak{S}(\bm{b},\bm{h},d;q)=\mathop{\sum\sum\sum}_{\substack{r,s_1,s_2\ppmod q\\(s_1-s_2,q)=d}}\prod_{i=1}^2\prod_{j=1}^4 e_q(h_is_i)\K\left(s_i(r+b_j);q\right)
\end{align}
with frequency parameters $\bm{h}=(h_1,h_2)$ satisfying
\[
h_i\ll H=\l(\frac {q}{AM}\r)^{1+\ve}.
\]
This completed exponential sum possesses the following multiplicative property.
\begin{lemma}[Multiplicative property]\label{lemmS}
Let $q=q_1q_2$ with $(q_1,q_2)=1$ and $d=d_1d_2$ where $d_i=(d,q_i)$. Then for arbitrary $\bm{b}$ and $\bm{h}$, we have
\[
\mathfrak{S}(\bm{b},\bm{h},d;q)=\mathfrak{S}(\bm{b},q_2\bm{h},d_1;q_1)\mathfrak{S}(\bm{b},q_1\bm{h},d_2;q_2).
\]
\end{lemma}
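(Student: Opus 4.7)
The proof is a standard Chinese Remainder Theorem (CRT) factorization, combined with the twisted multiplicativity of the Kloosterman sum already used in Lemma \ref{lemmMR}, so the main work is bookkeeping to make the factors $q_2 \bm{h}$ and $q_1 \bm{h}$ appear after an appropriate change of variables. The plan is to perform CRT on all summation variables $r, s_1, s_2 \bmod q$, factor each piece of the summand, and then adjust by a linear substitution.

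First, I would parametrize $r, s_1, s_2 \bmod q$ by their reductions $(r^{(1)}, r^{(2)})$, $(s_i^{(1)}, s_i^{(2)})$ modulo $(q_1, q_2)$, using the CRT bijection. Under this bijection the coprimality condition $(s_1-s_2, q)=d$ splits as $(s_1^{(1)}-s_2^{(1)}, q_1)=d_1$ and $(s_1^{(2)}-s_2^{(2)}, q_2)=d_2$, by the multiplicativity of the gcd on coprime parts and the definition $d_i=(d,q_i)$. The twisted multiplicativity of the normalized Kloosterman sum gives
\[
\K\bigl(s_i(r+b_j);q\bigr) = \K\bigl(\ol{q_2}^{\,2}\,s_i^{(1)}(r^{(1)}+b_j);\,q_1\bigr)\,\K\bigl(\ol{q_1}^{\,2}\,s_i^{(2)}(r^{(2)}+b_j);\,q_2\bigr),
\]
where $\ol{q_2}$ denotes the inverse of $q_2$ modulo $q_1$ and $\ol{q_1}$ the inverse of $q_1$ modulo $q_2$. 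Similarly, writing $s_i \equiv s_i^{(1)} q_2 \ol{q_2} + s_i^{(2)} q_1 \ol{q_1} \pmod{q}$ via CRT and simplifying,
\[
e_q(h_i s_i) = e_{q_1}\bigl(\ol{q_2}\,h_i s_i^{(1)}\bigr)\,e_{q_2}\bigl(\ol{q_1}\,h_i s_i^{(2)}\bigr).
\]

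Second, the inner product over $j$ and $i$ now cleanly separates the $q_1$-variables from the $q_2$-variables, so the sum factors as a product of two sums, one over $r^{(1)}, s_1^{(1)}, s_2^{(1)}$ modulo $q_1$, the other over the analogous variables modulo $q_2$. To put each factor into the form of $\mathfrak S$, I would then make the substitutions $s_i^{(1)} \mapsto q_2^{\,2}\, s_i^{(1)}$ in the $q_1$-sum and $s_i^{(2)} \mapsto q_1^{\,2}\, s_i^{(2)}$ in the $q_2$-sum, which are bijections on $\bbm Z/q_1\bbm Z$ and $\bbm Z/q_2\bbm Z$ respectively since $(q_1,q_2)=1$. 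These substitutions absorb the twisting factors $\ol{q_2}^{\,2}$ and $\ol{q_1}^{\,2}$ inside the Kloosterman sums, turning them into $\K(s_i^{(1)}(r^{(1)}+b_j);q_1)$ and $\K(s_i^{(2)}(r^{(2)}+b_j);q_2)$. On the exponential side, $e_{q_1}(\ol{q_2} h_i s_i^{(1)})$ becomes $e_{q_1}(q_2 h_i s_i^{(1)})$ (since $\ol{q_2}\cdot q_2^{\,2} \equiv q_2 \bmod q_1$), and symmetrically for the $q_2$-piece. The gcd conditions are preserved because the substitution multipliers are coprime to the respective moduli.

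There is no real analytic obstacle here—only the need to keep track of which $\ol{q_i}$ is modulo which prime factor and to verify that the final substitution simultaneously cleans up the Kloosterman twist and produces precisely the prescribed frequency $q_{3-i}\bm h$. Collecting the two factors yields exactly
\[
\mathfrak{S}(\bm{b},\bm{h},d;q)=\mathfrak{S}(\bm{b},q_2\bm{h},d_1;q_1)\,\mathfrak{S}(\bm{b},q_1\bm{h},d_2;q_2),
\]
which completes the argument.
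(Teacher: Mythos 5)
Your proposal is correct and takes essentially the same route as the paper: CRT on $r,s_1,s_2$, twisted multiplicativity of the Kloosterman sums together with the reciprocity identity $e_q(a)=e_{q_1}(a\ol{q}_2)e_{q_2}(a\ol{q}_1)$, the observation that $(s_1-s_2,q)=d$ splits into the two local gcd conditions, and finally the change of variables $s_i^{(1)}\mapsto q_2^2 s_i^{(1)}$, $s_i^{(2)}\mapsto q_1^2 s_i^{(2)}$ to absorb the twists and surface the frequencies $q_2\bm h$ and $q_1\bm h$. The paper's proof is more compressed but consists of exactly these steps.
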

\begin{proof}
By the twisted multiplicativity of Kloostermann sums, the reciprocity law of exponent functions and the Chinese remainder theorem, we have
\begin{align*}
\mathfrak{S}(\bm{b},\bm{h},d;q)=&\mathop{\sum\sum\sum}_{\substack{r_1,s_1,s_2\ppmod {q_1}\\(s_1-s_2,q_1)=d_1}}\prod_{i=1}^2\prod_{j=1}^4 e_{q_1}(h_is_i\ol{q}_2)\K\left(s_i(r_1+b_j)\ol{q}_2^2;q_1\right)\\
&\times\mathop{\sum\sum\sum}_{\substack{r_2,t_1,t_2\ppmod {q_2}\\(t_1-t_2,q_2)=d_2}}\prod_{i=1}^2\prod_{j=1}^4 e_{q_2}(h_it_i\ol{q}_1)\K\left(t_i(r_2+b_j)\ol{q}_1^2;q_2\right).
\end{align*}
After making the variable substitutions
\[
s_i\ol{q}^2_2\rightarrow s_i,\quad t_i\ol{q}^2_1\rightarrow t_i,
\]
we obtain the multiplicative relation.
\end{proof}

For $d\mid \rho$, Lemma \ref{lemmS} yields the factorization
\begin{align}\label{eqSSS}
\mathfrak{S}(\bm{b},\bm{h},d;q)=\mathfrak{S}(\bm{b},q^\star\bm{h},d;\rho )\mathfrak{S}(\bm{b},\rho\bm{h},1;q^\star).
\end{align}
The first component admits the trivial bound
\begin{align}\label{eqtrivialS}
\mathfrak{S}(\bm{b},q^\star\bm{h},d;\rho )\ll  \rho^3,
\end{align}
while the evaluation of $\mathfrak{S}(\bm{b},\bm{h},1;q^\star)$ reduces, via the multiplicative relation, to analyzing the prime-power cases $\mathfrak{S}(\bm{b},\bm{h},1;p)$ and $\mathfrak{S}(\bm{b},\bm{h},1;p^2)$.

\begin{lemma}\label{lemS11}
For $\bm{b}\notin\mathbb{B}_p^\Delta$, we have
\begin{align}\label{eqBS11}
\mathfrak{S}(\bm{b},\bm{h},1;p)\ll p^{\frac52}.
\end{align}
\end{lemma}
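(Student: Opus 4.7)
The plan is to reduce the bound directly to Lemma \ref{lemR1} by arranging the three summations so that the inner $r$-sum becomes precisely the complete sum $\mR(\bm{b},(s_1,s_2);p)$ that the lemma controls. First I would observe that for the prime modulus $p$, the constraint $(s_1-s_2,p)=1$ in the definition \eqref{eqdefS} of $\mathfrak{S}(\bm{b},\bm{h},1;p)$ simplifies to $s_1\not\equiv s_2\pmod{p}$.

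Next, I would factor out of the product the exponential weights $e_p(h_is_i)$, which depend only on $i$ and hence are constant in both $r$ and $j$, and push the $r$-summation to the inside. This gives
\[
\mathfrak{S}(\bm{b},\bm{h},1;p)=\sum_{\substack{s_1,s_2\ppmod p\\ s_1\not\equiv s_2\ppmod p}} e_p(4h_1s_1+4h_2s_2)\,\mR(\bm{b},(s_1,s_2);p).
\]
Then, invoking the hypothesis $\bm{b}\notin\mathbb{B}_p^\Delta$, Lemma \ref{lemR1} bounds each inner factor by $|\mR(\bm{b},(s_1,s_2);p)|\ll p^{1/2}$ uniformly in the admissible pairs. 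Taking absolute values (the $e_p$ factors have modulus one) and estimating trivially over the $p^2-p=O(p^2)$ pairs $(s_1,s_2)$ with $s_1\not\equiv s_2$ yields
\[
|\mathfrak{S}(\bm{b},\bm{h},1;p)|\ll p^2\cdot p^{1/2}=p^{5/2},
\]
which is the desired estimate.

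The reduction itself is entirely formal and incurs no loss beyond the trivial sum in the two outer variables; the substance of the bound rests in Lemma \ref{lemR1}, which supplies the crucial square-root saving for the complete $r$-sum. Thus there is no serious technical obstacle in this step, the main work having already been absorbed into the proof of Lemma \ref{lemR1} (and, through it, into the input from \cite{KMS2017} together with the trivial handling of Ramanujan-type contributions when some $s_i$ vanishes modulo $p$).
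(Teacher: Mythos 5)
Your proof is correct and matches the paper's own argument essentially line for line: both factor the $r$-independent weights $e_p(h_is_i)$ out, recognize the inner sum as $\mR(\bm{b},\bm{s};p)$ with $s_1\not\equiv s_2\pmod p$, invoke Lemma \ref{lemR1} for the $p^{1/2}$ bound, and sum trivially over the $O(p^2)$ pairs $(s_1,s_2)$. The only cosmetic addition is writing the combined weight as $e_p(4h_1s_1+4h_2s_2)$, which the paper suppresses by passing immediately to absolute values.
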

\begin{proof}
We begin by observing the preliminary bound
\[
\mathfrak{S}(\bm{b},\bm{h},1;p)\ll\mathop{\sum\sum\sum}_{\substack{s_1,s_2\ppmod p\\(s_1-s_2,p)=1}}\left|\mR(\bm{b},\bm{s};p)\right|,
\]
where $\bm{s}=(s_1,s_2)$. For each fixed pair $\bm{s}$,  Lemma \ref{lemR1} establishes $\mR(\bm{b},\bm{s};p)\ll p^{\frac12}$ under the hypothesis $\bm{b}\notin\mathbb{B}_p^\Delta$. Then the trivial estimate for the double summation over $s_1$, $s_2$ gives the required bound.
\end{proof}

\begin{lemma}\label{lemS1}
For sufficiently large prime $p$, there exists a codimension $1$ subvariety $\mV^{bad}\subset\bbm{F}_p^4$ containing $\mV^\Delta$,  with degree bounded independently of $p$, such that for all $\bm{b}$ satisfying $\bm{b}\ (\bmod p)\notin\mV^{bad}$ and arbitrary $\bm{h}$, we have
\begin{align}\label{eqBS1}
\mathfrak{S}(\bm{b},\bm{h},1;p)\ll p^{\frac32}.
\end{align}
\end{lemma}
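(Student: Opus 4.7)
The plan is to open the eight normalized Kloosterman sums in $\mathfrak{S}(\bm b,\bm h,1;p)$ as character sums over $\bbm{F}_p^{\times}$, execute the inner $s_1$ and $s_2$ sums (whose exponents are linear in $s_i$) to produce delta-function constraints, and then invoke Deligne's theorem to get square-root cancellation on the resulting affine variety of dimension $7$.

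Concretely, using $\K(a;p)=p^{-1/2}\sum_{x\in\bbm{F}_p^{\times}} e_p(ax+\bar x)$, one writes
\[
\mathfrak{S}(\bm b,\bm h,1;p)=p^{-4}\sum_{r}\sum_{s_1\ne s_2}\sum_{\bm x\in (\bbm{F}_p^{\times})^{8}}e_p\B(h_1s_1+h_2s_2+\sum_{i,j}s_i(r+b_j)x_{ij}+\sum_{i,j}\bar x_{ij}\B).
\]
Since $s_1,s_2$ appear only linearly, carrying out these two sums converts them into the constraints
\[
A_i(r,\bm x_i):=h_i+\sum_{j=1}^4(r+b_j)x_{ij}\equiv 0\pmod p,\qquad i=1,2,
\]
with overall weight $p^2$, minus a diagonal correction from $s_1=s_2$ of size $O(p)$ that contributes $O(p^{-3}\cdot p^{4})=O(p)$ after summation over the resulting $8$-dimensional locus and is therefore harmless. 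The main term is
\[
\mathfrak{S}_{\mathrm{main}}=p^{-2}\sum_{(r,\bm x)\in V_{\bm b,\bm h}(\bbm{F}_p)} e_p\B(\sum_{i,j}\bar x_{ij}\B),
\]
where $V_{\bm b,\bm h}\subset\bbm A^{9}$ is the open subvariety of $(r,\bm x)$ with $x_{ij}\ne 0$ satisfying $A_1=A_2=0$, which for generic $\bm b$ is equidimensional of dimension~$7$. The bound \eqref{eqBS1} is thus equivalent to the square-root cancellation
\[
\B|\sum_{(r,\bm x)\in V_{\bm b,\bm h}(\bbm{F}_p)} e_p\B(\sum_{i,j}\bar x_{ij}\B)\B|\ll p^{7/2},
\]
which is what Deligne's theorem delivers once the relevant $\ell$-adic cohomology sheaf $R\pi_!(\mathcal L_\psi(\sum\bar x_{ij}))$ is generically pure of weight seven and its lower-cohomology groups are shown not to contribute.

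The bad locus $\mV^{bad}\subset\bbm A^4$ is then precisely the set of $\bm b$ at which one of these geometric hypotheses fails: it contains the collision locus $\mV^{\Delta}$ (which is also where Lemma \ref{lemS11} was necessary), the locus where the fiber $V_{\bm b,\bm h}$ drops dimension or acquires a component of dimension $>7$, and the locus where the corresponding sheaf fails to be geometrically irreducible. Each such condition is cut out by the vanishing of a polynomial in $b_1,\dots,b_4$ whose coefficients come from the Kloosterman sheaf and are independent of $p$. Consequently $\mV^{bad}$ is a proper closed subvariety, of codimension at least one in $\bbm A^4$, with degree bounded uniformly in $p$.

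The hard part will be the cohomological and monodromy analysis needed to verify the purity hypothesis on the complement of $\mV^{bad}$, and to identify the polynomial equations that cut it out explicitly. I expect this to be accomplished by a careful adaptation of the sheaf-theoretic analysis of Kowalski--Michel--Sawin~\cite{KMS2017}: after the multiplicative rescaling $x_{ij}\mapsto x_{ij}/s_i$ (valid whenever $s_i\ne 0$), the integrand exhibits $\mathfrak{S}(\bm b,\bm h,1;p)$ as a two-variable additive-character twist of a product of Kloosterman-type trace functions evaluated at the affine-linear images $r+b_j$, allowing one to import the irreducibility and purity statements from \cite{KMS2017} and extend them to this multi-parameter situation by a specialization/degeneration argument that isolates the locus of forbidden resonances among the $b_j$.
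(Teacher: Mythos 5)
Your formal setup (opening the eight Kloosterman sums, executing the linear $s_1,s_2$-sums to obtain two affine constraints, and reducing the bound to square-root cancellation on the resulting $7$-dimensional variety in $\mathbb{A}^9$) is essentially correct and is indeed the kind of framework KMS use. But the proposal has a genuine gap: the entire cohomological content — purity, vanishing of lower-weight cohomology, generic geometric irreducibility — is explicitly deferred as ``the hard part,'' with only a prediction that a ``careful adaptation'' of KMS will work. That is the whole lemma, so this proposal is a plan rather than a proof. Compare with the paper's proof, which is two sentences: restrict to $s_1s_2\neq 0$ and observe that the bound is \emph{already} \cite[Theorem 2.6]{KMS2017} verbatim (combined with the argument immediately preceding it), then dispose of the terms with some $s_i=0$ by noting that the four corresponding Kloosterman sums degenerate to Ramanujan sums $-p^{-1/2}$, so those terms contribute $O(p)$. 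You seem to have missed that KMS's Theorem 2.6 is already the exact statement needed here, so there is nothing to re-derive.

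Two further points. First, you split off the diagonal $s_1=s_2$, not the locus $s_1s_2=0$; you then assert the diagonal is $O(p)$ via ``$O(p^{-3}\cdot p^4)$,'' but the arithmetic doesn't parse (a trivial bound on the diagonal is $O(p^2)$, and getting it down to $O(p^{3/2})$ already requires nontrivial cancellation in the $r$-sum over a product of eight Kloosterman factors). Second, even your own closing paragraph notes that the rescaling $x_{ij}\mapsto x_{ij}/s_i$ is only valid on $s_i\neq 0$, yet the proposal never returns to estimate the $s_i=0$ terms; this residual piece must be handled, and the paper does so explicitly by the Ramanujan-sum degeneration.
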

\begin{proof}
If we restrict the sum in the definition \eqref{eqdefS} to the terms with $s_1s_2\neq 0$, then the bound follows from \cite[Theorem 2.6]{KMS2017}, combining with the argument just before it.
For the remaining terms, in which one of $s_i=0$, the corresponding four Kloosterman sums reduce to
\[
\K(s_i(r+b_j);p)=-p^{-\frac12},
\]
which shows that their contribution is at most $O(p)$.
\end{proof}

\begin{theorem}\label{lemBS2}
For $\bm{b}\notin\mathbb{B}_p^\Delta$, we have
\begin{align}\label{eqBS2}
\mathfrak{S}(\bm{b},\bm{h},1;p^2)\ll
\begin{cases}
p^4 & \text{if}\ \bm{h}\ppmod p\equiv(0,0),\\
p^4 & \text{if}\  (\bm{b},\bm{h})\in \mV_4^{bad}\times\mV_2^{bad}(\bm{b}),\\
p^3 & \text{otherwise}.
\end{cases}
\end{align}
Here $\mV_4^{bad}\subset\mathbb{F}_p^4$ is a variety defined by a homogeneous polynomial of bounded degree, while for any given $\bm{b}$, the variety $\mV_2^{bad}(\bm{b})\subset\mathbb{F}_p^2$ is defined by
$O(1)$ linear homogeneous polynomials.
\end{theorem}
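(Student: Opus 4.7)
The plan is to evaluate $\mathfrak{S}(\bm{b},\bm{h},1;p^2)$ by explicit $p$-adic stationary phase and reduce it to counting $\mathbb{F}_p$-points on a low-dimensional algebraic variety. For odd $p$ and $p\nmid a$, Hensel's lemma gives the Sali\'e-type evaluation
\[ \K(a;p^2)=\sum_{\epsilon\in\{\pm1\}}e_{p^2}(2\epsilon\sqrt{a}), \]
supported on $a$ being a square modulo $p$, with $\sqrt{a}$ the Hensel-lifted root modulo $p^2$; Kloosterman sums with $p\mid s_i(r+b_j)$ reduce to Ramanujan sums contributing $O(p^{-1/2})$ each and are absorbed as in the proof of Lemma \ref{lemS11}. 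Substituting into \eqref{eqdefS}, the product of eight Kloosterman sums expands into $2^8$ exponentials indexed by sign choices $\bm{\epsilon}=(\epsilon_{ij})\in\{\pm1\}^8$.

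Next I write $r=r_0+pr_1$ and $s_i=s_{i,0}+ps_{i,1}$ with all digits in $\mathbb{F}_p$, and Taylor-expand
\[ \sqrt{s_i(r+b_j)}\equiv\sqrt{s_{i,0}(r_0+b_j)}+\frac{p\bigl(s_{i,1}(r_0+b_j)+s_{i,0}r_1\bigr)}{2\sqrt{s_{i,0}(r_0+b_j)}}\pmod{p^2}. \]
This separates the phase into a high-frequency part in $r_0,s_{1,0},s_{2,0}$ modulo $p^2$ and a low-frequency part linear in $r_1,s_{1,1},s_{2,1}$ modulo $p$. Orthogonality over the ``higher'' digits $r_1,s_{1,1},s_{2,1}\in\mathbb{F}_p$ then yields three stationary-phase congruences modulo $p$,
\begin{align*}
h_1+\sum_{j}\epsilon_{1j}\sqrt{(r_0+b_j)/s_{1,0}}&\equiv 0,\\
h_2+\sum_{j}\epsilon_{2j}\sqrt{(r_0+b_j)/s_{2,0}}&\equiv 0,\\
\sum_{i,j}\epsilon_{ij}\sqrt{s_{i,0}/(r_0+b_j)}&\equiv 0,
\end{align*}
each satisfied congruence contributing a factor of $p$, so that
\[ \mathfrak{S}(\bm{b},\bm{h},1;p^2)\ll p^3\sum_{\bm{\epsilon}\in\{\pm1\}^8}\bigl|V_{\bm{\epsilon}}(\bm{b},\bm{h})\bigr|, \]
where $V_{\bm{\epsilon}}(\bm{b},\bm{h})\subset\mathbb{F}_p^3$ is the locus of $(r_0,s_{1,0},s_{2,0})$ satisfying the three congruences together with $s_{i,0}(r_0+b_j)\in(\mathbb{F}_p^\times)^2$ and $s_{1,0}\not\equiv s_{2,0}\pmod p$.

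To bound $|V_{\bm{\epsilon}}(\bm{b},\bm{h})|$, I parametrize $u=\sqrt{s_{1,0}}$, $v=\sqrt{s_{2,0}}$, $w_j=\sqrt{r_0+b_j}$, recasting the three congruences as
\[ h_1u+\sum_j\epsilon_{1j}w_j\equiv 0,\quad h_2v+\sum_j\epsilon_{2j}w_j\equiv 0,\quad u\sum_j\epsilon_{1j}/w_j+v\sum_j\epsilon_{2j}/w_j\equiv 0, \]
subject to the quadrics $w_j^2-w_k^2=b_j-b_k$. For generic $(\bm{b},\bm{h})$ these cut out a zero-dimensional scheme of $O(1)$ $\mathbb{F}_p$-points on the smooth $w$-curve, yielding $\mathfrak{S}\ll p^3$. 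The dimension jumps to one (so $|V_{\bm{\epsilon}}|\ll p$ and $\mathfrak{S}\ll p^4$) in exactly two regimes: first, if $\bm{h}\equiv(0,0)\pmod p$, the first two equations become $\sum_j\epsilon_{ij}w_j\equiv 0$, independent of $u,v$, leaving the third as a single constraint on the plane $(u,v)\in\mathbb{F}_p^2$; otherwise, eliminating $u,v$ from the first two equations and substituting into the third produces a single polynomial relation of the shape $h_2C_1A+h_1C_2B\equiv 0$ with $C_i=\sum_j\epsilon_{ij}w_j$ and $A,B$ the corresponding reciprocal sums, which, after expansion via the identity $C_iA\equiv 4+\sum_{j\neq k}\epsilon_{ij}\epsilon_{ik}w_j/w_k$ modulo the defining quadrics, vanishes identically in $r_0$ only on a bounded-degree algebraic locus in $(\bm{b},\bm{h})$-space. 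Packaging these conditions across the $2^8$ choices of $\bm{\epsilon}$ defines $\mV_4^{bad}$ via a single homogeneous polynomial of bounded degree in $\bm{b}$ and, for each admissible $\bm{b}$, a $\bm{b}$-dependent $\mV_2^{bad}(\bm{b})$ cut out by $O(1)$ linear homogeneous relations in $\bm{h}$.

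The hypothesis $\bm{b}\notin\mathbb{B}_p^\Delta$ enters crucially through the linear independence of $\{\sqrt{r_0+b_j}\}_{j=1}^4$ over $\mathbb{F}_p(r_0)$, which prevents any single $C_i$, $A$, or $B$ from vanishing identically on the $w$-curve and thereby rules out the most extreme degeneracies. The hardest step is this final classification: for each of the $2^8$ sign patterns $\bm{\epsilon}$, identifying precisely when the reduced relation degenerates, controlling degrees uniformly in $p$, and verifying that the combined bad locus admits the stated homogeneous-in-$\bm{b}$ and linear-in-$\bm{h}$ structure.
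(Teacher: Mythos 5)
Your reduction is essentially identical in spirit to the paper's. The Sali\'e-type evaluation of $\K(a;p^2)$ and the expansion into $2^8$ sign patterns, followed by orthogonality over the higher $p$-adic digits $r_1,s_{1,1},s_{2,1}$, is a more compact way of writing exactly what the paper does by parametrizing $x=x_{ij}(1+y_{ij}p)$, $s_i=x_i(1+y_ip)$, $r=u+vp$ and summing over $y_{ij},y_i,v$: the $y_{ij}$-sums impose the stationary-phase conditions $x_{ij}^2\equiv(u+b_j)\ol{x}_i$, which is the Sali\'e square-root constraint, and the $y_i,v$-sums impose the three congruences you extract. Both routes land on $\mathfrak{S}(\bm{b},\bm{h},1;p^2)\ll p^3\mathcal{K}$, where $\mathcal{K}$ counts $\mathbb{F}_p$-points (subject to the $\mathbb{F}_p^\times$ and $s_1\neq s_2$ constraints) on a generically zero-dimensional variety, and your variety $V_{\bm\epsilon}\subset\mathbb{F}_p^3$ is precisely the paper's $(u,x_{11},x_{21})$-locus after the reduction at the start of \S\ref{counting-solutions-subsec}.

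The genuine gap is that the estimate on $\mathcal{K}$ is where the real work of the theorem lies, and you leave it as a sketch. The theorem's content is not merely that the variety is \emph{generically} zero-dimensional, but a precise classification of \emph{when} the dimension jumps and a verification that the bad locus is cut out by a single bounded-degree homogeneous polynomial in $\bm b$ and $O(1)$ linear forms in $\bm h$. Your reduction to a relation of the shape $\ol{h}_1f_1(u)g_1(u)+\ol{h}_2f_2(u)g_2(u)=0$ is correct and matches the paper, but establishing that this degenerates only on the stated locus requires rationalizing the square-root expressions (the paper's Lemma~\ref{lempower}) and then explicitly computing leading coefficients of the resulting polynomials in $u$ (degrees up to $32$), showing they cannot all vanish unless $\bm{b}\notin\mathcal{V}^\Delta$ forces $(\bm b,\bm h)$ onto the claimed bad locus --- this is the paper's Lemmas~\ref{lemsolution1}--\ref{lemb1234} and their explicit coefficient computations, e.g.\ the identities for $c_{24},c_{22},c_{32},c_{28}$. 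You acknowledge this is ``the hardest step'' but do not carry it out; the appeal to ``linear independence of $\{\sqrt{r_0+b_j}\}$'' does not by itself rule out the possibility that the bad locus is much larger than claimed (or not algebraic of bounded degree, uniformly in $p$), nor does it deliver the linear-in-$\bm h$ structure of $\mathcal{V}_2^{bad}(\bm b)$, which is essential downstream for the count $|\mathbb{H}^{bad}|\ll H(1+H/p)$ in the proof of Theorem~\ref{thmBK}. As written, the proposal reduces the theorem to the same point-counting problem the paper solves, and stops.
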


\begin{remark}
Note that bounds in \eqref{eqtrivialS} -- \eqref{eqBS1} are independent of the parameter vector $\bm{h}$, whereas the estimates in \eqref{eqBS2} exhibit homogeneous dependence on $\bm{h}$. Consequently, these bounds remain valid under the substitution $\bm{h}\rightarrow c\bm{h}$ for any integer $c$ satisfying $(c, p)=1$. This substitution principle is particularly significant when applying the multiplicative decomposition from Lemma \ref{lemmS}, as it ensures the preservation of estimate quality under scaling transformations of the frequency parameters. The uniform bounds thus obtained are essential for handling the exponential sums that arise in the subsequent analytic arguments.
\end{remark}

We defer the proof of Theorem \ref{lemBS2} to the following section and proceed to establish Theorem \ref{thmBK} assuming the validity of Theorem \ref{lemBS2}.
To begin with, we define the required notation.
For large prime $p$, we define the exceptional set
\[
\mathbb{B}_p^{bad}=\{\bm{b}\in\mathbb{B}\ |\ \bm{b}\ppmod p\in\mV^{bad}\}\setminus\mathbb{B}_p^\Delta,
\]
where $\mV^{bad}$ is as defined in Lemma \ref{lemS1}. This allows us to partition the parameter space into three disjoint subsets such that
\[
\mathbb{B}=\mathbb{B}_p^{\Delta}\cup\mathbb{B}_p^{bad}\cup\mathbb{B}_p^{gen}.
\]
Since $\mV^{bad}$  has a finite degree, independently of $p$, we have
\[
\left|\mathbb{B}_p^{\Delta}\right|\ll B^2\l(1+\frac {B^2}{p^2}\r),\quad \left|\mathbb{B}_p^{bad}\right|\ll B^3\l(1+\frac {B}{p}\r), \quad \left|\mathbb{B}^{gen}_p\right|\ll B^4.
\]

The proof of Theorem \ref{thmBK} will be carried out in three cases depending on the factorization of $q^\star$. In all cases, we employ the following parameter choices
\begin{align}\label{eqAB1}
A=M^{-\frac12}N^{\frac12}q^{\frac18}\rho^{\frac38}, \quad B=M^{\frac12}N^{\frac12}q^{-\frac18}\rho^{-\frac38}.
\end{align}
For $MN\le q^{\frac54}\rho^{-\frac14}$, it is easy to check the inequalities
\begin{equation}\label{eqAB2}
AM<q, \quad AB=N,\quad B\le q^{\frac12}\rho^{-\frac12}=(q^\star)^{\frac12},
\end{equation}
which we present here for easy of reference.

\subsubsection{Proof of Theorem \ref{thmBK} for $q^\star=p$}
Through the multiplicative property (Lemma \ref{lemmMR}), we derive from \eqref{eqSigma} that
\begin{align}\label{eqMSigma1}
\Sigma^{d}(\bm{b},AM;q)\ll (AM)^2\left|\mR(\bm{b},\ol{p}^2\bm{l};\rho)\mR(\bm{b},\ol{\rho}^2\bm{l};p)\right|.
\end{align}
For $\bm{b}\notin\mathbb{B}^{\Delta}_p$, we apply the trivial bound \eqref{eqTBR} to the first exponential sum and \eqref{eqR1} to the second one, obtaining
\begin{align}\label{1BSigma1}
\Sigma^{d}(\bm{b},AM;q)\ll (AM)^2(q\rho )^{\frac12}.
\end{align}

On the other hand, applying the multiplicative property (Lemma \ref{lemmS}) into \eqref{eqSS}, we arrive at
\begin{align}\label{eqMSigma2}
\Sigma^{d}(\bm{b},AM;q)\ll \frac{(AM)^2}{q^2}\sum_{\bm{h}}\left|\mathfrak{S}(\bm{b},p\bm{h},d;\rho )\mathfrak{S}(\bm{b},\rho\bm{h},1;p)\right|.
\end{align}
For $\bm{b}\in\mathbb{B}^{gen}_p$, we bound the first exponential sum via \eqref{eqtrivialS} and the second one via \eqref{eqBS1}, getting
\begin{align}\label{1BSigma2}
\Sigma^{d}(\bm{b},AM;q)\ll q^\ve (q\rho )^{\frac32}.
\end{align}

To combine all cases, we apply the trivial bound \eqref{eqTBSigma} for $\bm{b}\in\mathbb{B}^\Delta_p$, the bound \eqref{1BSigma1} for $\bm{b}\in\mathbb{B}^{bad}_p$ and finally the bound \eqref{1BSigma2} for $\bm{b}\in\mathbb{B}^{gen}_p$. For $p=q^\star\ge B$ (see \eqref{eqAB2}), this gives
\begin{align}\label{eqSumbS}
\sum_{\bm{b}}\left|\Sigma^{d}(\bm{b},AM;q)\right|\ll q^\ve\l(A^2B^2M^2q+A^2B^3M^2( q\rho)^{\frac12}+B^4( q\rho)^{\frac32}\r).
\end{align}
By using parameters $A$, $B$ from \eqref{eqAB1} under $MN\le q^{\frac54}\rho^{-\frac14}$, the first and third terms on the right-hand side of \eqref{eqSumbS} are equal to $(MN)^2q$, and the second term is
\[
(MN)^{\frac52}q^{\frac38}\rho^{\frac18}\ll (MN)^2q.
\]
We conclude
\begin{align*}
\sum_{\bm{b}}\left|\Sigma^{d}(\bm{b},AM;q)\right|\ll (MN)^2q^{1+\ve},
\end{align*}
and consequently by \eqref{eqT}
\begin{equation}\label{eqTd}
\begin{aligned}
\mT_d(\bm{\alpha},1_{\mN})&\ll \frac{q^\ve}{N}(AN)^{\frac34}M^{\frac12}\|\bm{\alpha}\|^{2}_2(MN)^{\frac12}q^{\frac14}\\
&=\|\bm{\alpha}\|^{2}_2(MN)^{\frac58}q^{\frac{11}{32}+\ve}\rho^{\frac9{32}}.
\end{aligned}
\end{equation}
Substituting this into \eqref{eqSum} with the trivial bound \eqref{eqtbT} for $p\mid d$, we arrive at
\begin{align*}
\sum_{N\le n\le 2N}\B|\sum_{M\le m\le 2M}\alpha_m\K(cmn;q)\B|^2\ll
q^\ve\|\bm{\alpha}\|^{2}_2MN\l(M^{-1}+p^{-1}+(MN)^{-\frac38}q^{\frac{11}{32}}\rho^{\frac{9}{32}}\r),
\end{align*}
which establishes \eqref{eqBKrho} through \eqref{eqthm5BM}.

\subsubsection{Proof of Theorem \ref{thmBK} for $q^\star=p_1p_2$, $p_1<p_2$}
In this case, we decompose $\mathbb{B}$ into seven disjoint subsets such that
\[
\mathbb{B}=\mathbb{B}^{\Delta}\cup\mathbb{B}^{bad}\cup\mathbb{B}^{gen}\cup\mathbb{B}_1\cup \mathbb{B}_2\cup\mathbb{B}'_1\cup \mathbb{B}'_2,
\]
where the components are defined as
\begin{align*}
\mathbb{B}^{\Delta}=\mathbb{B}^{\Delta}_{p_1}\cap\mathbb{B}^{\Delta}_{p_2}, \quad \mathbb{B}^{bad}=\mathbb{B}^{bad}_{p_1}\cap\mathbb{B}^{bad}_{p_2},  \quad \mathbb{B}^{gen}=\mathbb{B}^{gen}_{p_1}\cap\mathbb{B}^{gen}_{p_2},
\end{align*}
and
\begin{align*}
\mathbb{B}_1=\mathbb{B}^{\Delta}_{p_1}\setminus\mathbb{B}^{\Delta}_{p_2}, \quad \mathbb{B}_2=\mathbb{B}^{\Delta}_{p_2}\setminus\mathbb{B}^{\Delta}_{p_1},\quad
\mathbb{B}'_1=\l(\mathbb{B}^{bad}_{p_1}\cap\mathbb{B}^{gen}_{p_2}\r),\quad \mathbb{B}'_2=\l(\mathbb{B}^{bad}_{p_2}\cap\mathbb{B}^{gen}_{p_1}\r).
\end{align*}
Note $\mathbb{B}_j=\emptyset$ when $p_j>B$.
For $p_1p_2=q^\star\ge B$, it is easy to see
\[
\left|\mathbb{B}^{\Delta}\right|\ll B^2,\quad \left|\mathbb{B}^{bad}\right|\ll B^3\l(1+\frac{B}{p_2}\r), \quad \left|\mathbb{B}^{gen}\right|\ll B^4,
\]
and
\[
\left|\mathbb{B}_j\right|\ll \frac{B^4}{p_j^2}, \quad \left|\mathbb{B}'_j\right|\ll B^3\l(1+\frac{B}{p_j}\r).
\]

Applying Lemmas \ref{lemmMR} \& \ref{lemmS} yields the analogues of \eqref{eqMSigma1} and \eqref{eqMSigma2} such that
\begin{align}\label{eq2MSigma1}
\Sigma^{d}(\bm{b},AM;q)\ll (AM)^2\rho\left|\mR(\bm{b},(\ol{\rho p_2})^2\bm{l};p_1)\mR(\bm{b},(\ol{\rho p_1})^2\bm{l};p_2)\right|,
\end{align}
\begin{align}\label{eq2MSigma2}
\Sigma^{d}(\bm{b},AM;q)\ll \frac{(AM)^2}{q^2}\rho^{3}\sum_{\bm{h}}\left|\mathfrak{S}(\bm{b},p_2\rho\bm{h},d;p_1) \mathfrak{S}(\bm{b},p_1\rho\bm{h},1;p_2)\right|.
\end{align}

For $\bm{b}\in \mathbb{B}^{bad}$, applying \eqref{eqR1} to both exponential sums in \eqref{eq2MSigma1} yields the estimate \eqref{1BSigma1}.

For $\bm{b}\in \mathbb{B}^{gen}$, applying \eqref{eqBS1} to both exponential sums in \eqref{eq2MSigma2} reproduces the estimate \eqref{1BSigma2}.

For $\bm{b}\in \mathbb{B}_j$,  we employ the trivial bound for the exponential sum modulo $p_j$  in \eqref{eq2MSigma1} and \eqref{eqR1} for the other sum, which gives
\begin{align}\label{eqB1}
\Sigma^{d}(\bm{b},AM;q)\ll (AM)^2(q\rho )^{\frac12}p_j^{\frac12}.
\end{align}

For $\bm{b}\in \mathbb{B}'_j$, we apply \eqref{eqBS11} to the exponential sum modulo $p_j$ in \eqref{eq2MSigma2} and \eqref{eqBS1} to the other sum, which shows
\[
\Sigma^{d}(\bm{b},AM;q)\ll q^\ve(q\rho )^{\frac32}p_j.
\]
Combining this with \eqref{1BSigma1} yields
\begin{align}\label{eqB2}
\Sigma^{d}(\bm{b},AM;q)\ll q^\ve \min\left\{(AM)^2(q\rho )^{\frac12},\ (q\rho )^{\frac32}p_j\right\} .
\end{align}

To consolidate all cases, we apply the trivial bound \eqref{eqTBSigma} for $\bm{b}\in\mathbb{B}^\Delta$, the bound \eqref{1BSigma1} for $\bm{b}\in\mathbb{B}^{bad}$, the bound \eqref{1BSigma2} for $\bm{b}\in\mathbb{B}^{gen}$, the bound \eqref{eqB1} for $\bm{b}\in \mathbb{B}_j$, and finally the bound \eqref{eqB2} for $\bm{b}\in \mathbb{B}'_j$. This yields the comprehensive estimate
\begin{align*}
\sum_{\bm{b}}&\left|\Sigma^{d}(\bm{b},AM;q)\right|\ll q^\ve\Bigg\{A^2B^2M^2q+A^2B^3M^2( q\rho)^{\frac12}\l(1+\frac{B}{p_2}\r)+B^4( q\rho)^{\frac32}\\
&\ \ \ \ +\sum_{j=1,2}A^2B^4M^2( q\rho)^{\frac12}p_j^{-\frac32}
+\sum_{j=1,2}\min\left\{A^2B^3M^2(q\rho )^{\frac12},\ B^3(q\rho )^{\frac32}p_j\right\}\l(1+\frac{B}{p_j}\r)\Bigg\}.
\end{align*}
For $ p_2>(q/\rho)^{\frac12}>p_1>(q/\rho)^{\frac13}$, it is easy to see that
\[
A^2B^3M^2(q\rho)^{\frac12}\frac{B}{p_2}+\sum_{j=1,2}A^2B^4M^2( q\rho)^{\frac12}p_j^{-\frac32}\ll A^2B^4M^2\rho,
\]
\[
\sum_{j=1,2}\min\left\{A^2B^3M^2(q\rho )^{\frac12},\ B^3(q\rho )^{\frac32}p_j\right\}\l(1+\frac{B}{p_j}\r)\ll A^2B^3M^2(q\rho )^{\frac12}+B^4( q\rho)^{\frac32}.
\]
Consequently, we conclude that
\begin{align}\label{eqSumbS2}
\sum_{\bm{b}}\left|\Sigma^{d}(\bm{b},AM;q)\right|\ll q^\ve\Bigg\{A^2B^2M^2q+A^2B^3M^2( q\rho)^{\frac12}+B^4( q\rho)^{\frac32}+A^2B^4M^2\rho\Bigg\},
\end{align}
where all terms except the last also appear in \eqref{eqSumbS}.
With $A$, $B$ from \eqref{eqAB1} and $MN\ll q^{\frac54}\rho^{-\frac14}$, the last term satisfies
\[
(MN)^{3}q^{-\frac14}\rho^{\frac14}\ll (MN)^2q.
\]
Now we also arrive at
\begin{align*}
\sum_{\bm{b}}\left|\Sigma^{d}(\bm{b},AM;q)\right|\ll (MN)^2q^{1+\ve},
\end{align*}
reconfirming \eqref{eqBKrho} through identical arguments as before.

\subsubsection{Proof of Theorem \ref{thmBK} for  $q^{\star}=p^2$}
Adopting notations from Theorem \ref{lemBS2}, we define
\[
\mathbb{B}^{bad}=\{\bm{b}\in\mathbb{B}\ |\ \bm{b}\ppmod p\in\mV_4^{bad}\}\setminus\mathbb{B}_p^\Delta,\quad \mathbb{H}^{bad}=\{\bm{h}\in\mathbb{H}\ |\ \bm{h}\ppmod p\in\mV_2^{bad}(\bm{b})\}.
\]
Given the finite degrees of $\mV_2^{bad}(\bm{b})$ and $\mV_4^{bad}$, we have
\[
\left|\mathbb{B}^{bad}\right|\ll B^3\l(1+\frac{B}{p}\r),\quad \left|\mathbb{H}^{bad}\right|\ll H\l(1+\frac{H}{p}\r).
\]
Applying Lemma \ref{lemmS}'s multiplicative relation to \eqref{eqSS} yields
\[
\Sigma^{d}(\bm{b},AM;q)\ll \frac{(AM)^2}{q^2}\rho^{3}\sum_{\bm{h}}\left|\mathfrak{S}(\bm{b},\rho \ol{c}\bm{h},1;p^2)\right|.
\]
Bounding the exponential sum via Theorem \ref{lemBS2} gives
\begin{align*}
\sum_{\bm{b}\in\mathbb{B}\setminus\mathbb{B}_p^\Delta}\Sigma^{d}(\bm{b},AM;q)&\ll \frac{(AM)^2}{q^2}\rho^3\l(B^4p^4\l(1+\frac {H^2}{p^2}\r)+B^3Hp^4\l(1+\frac Bp\r)\l(1+\frac Hp\r)+B^4H^2p^3\r)\\
&\ll \frac{(AM)^2}{q^2}\rho^3\l(B^4p^4+B^3Hp^4+B^4H^2p^3\r)\\
&\ll q^\ve\l(A^2B^4M^2\rho +AB^3M q\rho+B^4( q\rho)^{\frac32}\r).
\end{align*}
Combining this with the trivial bound \eqref{eqTBSigma} for $\bm{b}\in\mathbb{B}^\Delta_p$ produces
\begin{equation}\label{eqSumbS3}
\begin{aligned}
\sum_{\bm{b}}\left|\Sigma^{d}(\bm{b},AM;q)\right|&\ll q^\ve\l(A^2B^2M^2q\l(1+\frac{B^2}{p^2}\r)+A^2B^4M^2\rho +AB^3M q\rho+B^4( q\rho)^{\frac32}\r)\\
&\ll q^\ve\l(A^2B^2M^2q+A^2B^4M^2\rho +B^4( q\rho)^{\frac32}\r),
\end{aligned}
\end{equation}
where $AB^3M q\rho$ is absorbed by the first and last terms. Noting that all terms in \eqref{eqSumbS3} appear in \eqref{eqSumbS2}, we also have
\begin{align*}
\sum_{\bm{b}}\left|\Sigma^{d}(\bm{b},AM;q)\right|\ll (MN)^2q^{1+\ve},
\end{align*}
available for $A$, $B$ in \eqref{eqAB1}. This establishes \eqref{eqBKrho}, concluding the proof of Theorem \ref{thmBK}.

\section{Proof of Theorem \ref{lemBS2}}\label{secces}
To establish an optimal bound for  $\mathfrak{S}(\bm{b},\bm{h},1;p^2)$, we require point counting on
a (generically zero-dimensional) variety in $\bbm{F}_p^{11}$.
For given $\bm{b}\in \bbm{F}_p^4$ and $\bm{h}\in\bbm{F}_p^2$, we define $\mV_{11}$ as the variety of $11$-tuples
\begin{equation}
\label{11tuple}
\bm{v}=(u,x_1,x_2,x_{11},\cdots, x_{14},x_{21},\cdots,x_{24})\in\bbm{F}_p^{11}
\end{equation}
satisfying the following eleven equations
\begin{subequations}\begin{align}
&x_i x_{ij}=(u+b_j)\ol{x}_{ij},\ \text{for}\ 1\le i\le 2, 1\le j\le4;\label{eqce1}\\
&\sum_{j=1}^4x_{ij}+h_i=0,\ \text{for}\ 1\le i\le 2;\label{eqce2}\\
&\sum_{i=1}^2\sum_{j=1}^4\ol{x}_{ij}=0.\label{eqce3}
\end{align}
\end{subequations}
Let $\mathcal{K}(\bm{b},\bm{h};p)$ denote the number of $\bm{v}\in\mV_{11}$ with
\begin{equation}
\label{additional-conditions}
x_1\neq x_2, \quad x_i,\ x_{ij},\ u+b_j\in \mathbb{F}_p^{\times}
\end{equation}
for $1\le i\le 2$, $1\le j\le 4$.

Since $\mV_{11}$ is defined by eleven polynomials in $\bbm{F}_p^{11}$, it generically constitutes a zero-dimensional variety. Thus, we expect $\mathcal{K}(\bm{b},\bm{h};p)\ll 1$ to hold except in degenerate cases. We make this precise in the following key lemma.

\begin{lemma}\label{thmK}
For $\bm{b}\in\bbm{F}_p^4$ and $\bm{h}\in\bbm{F}_p^2$ with $\bm{b}\notin\mV^\Delta$, we have
\begin{align}
\mathcal{K}(\bm{b},\bm{h};p)\ll
\begin{cases}
p & \text{if}\ \bm{h}=(0,0),\\
p & \text{if}\  (\bm{b},\bm{h})\in \mV_4^{bad}\times\mV_2^{bad}(\bm{b}),\\
1 & \text{otherwise}.
\end{cases}
\end{align}
Here $\mV_4^{bad}\subset\mathbb{F}_p^4$ is a variety defined by a homogeneous polynomial of bounded degree, while for any given $\bm{b}$, the variety $\mV_2^{bad}(\bm{b})\subset\mathbb{F}_p^2$ is defined by
$O(1)$ linear homogeneous polynomials.
\end{lemma}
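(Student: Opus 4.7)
The plan is to study $\mV_{11}$ by successive elimination and identify the exceptional loci where the variety jumps in dimension. A dimension count: the $11$ defining equations in $11$ variables generically cut out a $0$-dimensional variety with $O(1)$ points, and the $O(p)$ bounds should arise precisely when the variety becomes $1$-dimensional.

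First I would use (\ref{eqce1}), rewritten as $x_{ij}^2=(u+b_j)/x_i$, to encode each $x_{ij}$ as $\epsilon_{ij}\sqrt{(u+b_j)/x_i}$ with $\epsilon_{ij}\in\{\pm 1\}$. Summing over the $2^8=O(1)$ sign patterns $\bm{\epsilon}=(\epsilon_{ij})$ reduces the problem to counting, for each $\bm{\epsilon}$, the triples $(u,x_1,x_2)\in\bbm{F}_p^3$ satisfying (\ref{eqce2})--(\ref{eqce3}) under this parametrization. For $h_i\neq 0$, squaring (\ref{eqce2}) yields $h_i^2 x_i=(\sum_j\epsilon_{ij}\sqrt{u+b_j})^2$, which determines $x_i$ as an element of the multi-quadratic extension $L=\bbm{F}_p(u)(\sqrt{u+b_1},\dots,\sqrt{u+b_4})$; the hypothesis $\bm{b}\notin\mV^\Delta$ guarantees $[L:\bbm{F}_p(u)]=16$, so the monomials $\prod_{j\in S}\sqrt{u+b_j}$ for $S\subseteq\{1,2,3,4\}$ are linearly independent over $\bbm{F}_p(u)$, making the elimination non-degenerate.

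Substituting $\ol{x}_{ij}=\epsilon_{ij}\sqrt{x_i}/\sqrt{u+b_j}$ into (\ref{eqce3}) and clearing denominators produces an identity of the form $\sum_{S}P_{\bm{\epsilon},S}(u;\bm{b},\bm{h})\prod_{j\in S}\sqrt{u+b_j}=0$ in $L$, whose coefficients $P_{\bm{\epsilon},S}$ are polynomial in $u,\bm{b}$ and linear in $\bm{h}$ (after scaling by $h_1h_2$), all of degree bounded independently of $p$. Generically the $P_{\bm{\epsilon},S}$'s have only $O(1)$ common roots in $u$, each determining $(x_1,x_2)$ and the $(x_{ij})$ up to $O(1)$ further choices, yielding $\mathcal{K}=O(1)$. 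The $O(p)$ contribution arises when the $P_{\bm{\epsilon},S}$'s vanish identically in $u$: for fixed $\bm{b}$, this is a system of $O(1)$ homogeneous linear conditions on $\bm{h}$, defining $\mV_2^{bad}(\bm{b})\subset\bbm{F}_p^2$; the $\bm{b}$-locus where these linear conditions become simultaneously trivial is cut out by a single homogeneous polynomial in $\bm{b}$ of bounded degree, defining $\mV_4^{bad}\subset\bbm{F}_p^4$. The case $\bm{h}=(0,0)$ requires direct treatment: (\ref{eqce2}) then becomes $\sum_j\epsilon_{ij}\sqrt{u+b_j}=0$, a bounded-degree constraint on $u$ leaving $x_1,x_2$ constrained only by (\ref{eqce3}), again contributing $O(p)$ points.

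The principal obstacle will be the careful bookkeeping across the $2^8$ sign patterns and the translation from algebraic identities in the function field $L$ to pointwise counts over $\bbm{F}_p$. One must confirm that outside of the claimed exceptional loci, at least one $P_{\bm{\epsilon},S}$ is non-trivial for at least one relevant $\bm{\epsilon}$, and that the union of degeneracy conditions across all $\bm{\epsilon}$ assembles into precisely the claimed $\mV_4^{bad}\times\mV_2^{bad}(\bm{b})$ rather than a larger set. A systematic Galois-invariance analysis over $L$, exploiting $\bm{b}\notin\mV^\Delta$, should deliver the required structure and the homogeneous degrees stated in the lemma.
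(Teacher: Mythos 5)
Your overall strategy — parametrize the $x_{ij}$ by sign choices and square roots, substitute into \eqref{eqce2}--\eqref{eqce3}, and reduce to a polynomial problem in $u$ — is the same broad route as the paper's, but there is a concrete error in the non-degeneracy claim that underpins your elimination. You assert that $\bm{b}\notin\mV^\Delta$ guarantees $[L:\bbm{F}_p(u)]=16$ for $L=\bbm{F}_p(u)(\sqrt{u+b_1},\dots,\sqrt{u+b_4})$, i.e.\ that the $16$ square-root monomials are linearly independent. This is false: the condition $\bm{b}\notin\mV^\Delta$ only requires \emph{some} $b_i$ to have odd multiplicity, not that the $b_j$ be pairwise distinct. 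For example $\bm{b}=(a,a,a,b)$ with $a\neq b$ is not in $\mV^\Delta$, yet $L=\bbm{F}_p(u)(\sqrt{u+a},\sqrt{u+b})$ has degree $4$, and your linear-independence argument collapses. Similarly $(a,a,b,c)$ with $a,b,c$ distinct is allowed, giving degree $8$. Since the whole point of the $\mV^\Delta$ exclusion is to handle these partial collisions (they dominate the contribution of the off-generic $\bm{b}$ in the application), this is not a peripheral case you can wave away; it must be handled head-on. A secondary gap: your sketch treats $\bm{h}=(0,0)$ but silently omits the mixed case $h_1=0\neq h_2$ (and its mirror), which requires a separate argument since one of \eqref{eqce2} no longer determines $x_1$.

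Comparing routes: the paper avoids your degree-$16$ pitfall by using $x_{11},x_{21}$ rather than $x_1,x_2$ as the free variables, so that \eqref{eqce2'}--\eqref{eqce3'} are literally linear in $x_{11},x_{21},\ol{x}_{11},\ol{x}_{21}$ and no $\sqrt{x_i}$ ever appears; the sign ambiguities are packaged into the $O(1)$ choices of the functions $f_i,g_i$. The paper then symmetrizes over all sign patterns at once via Lemma~\ref{lempower}, producing a bona fide polynomial in $u$ whose leading coefficients are computed \emph{explicitly} (with software assistance) in Lemmas~\ref{lemsolution1}--\ref{lemsolution2}; the bad loci $\mV_4^{bad}$, $\mV_2^{bad}(\bm{b})$ are then \emph{read off} from the vanishing of specific coefficients like $c_{24},c_{22}$ and $c_{32},c_{28}$. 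Lemma~\ref{lemb1234} is used to show $f_i$ and $g_i$ cannot vanish simultaneously when $\bm{b}\notin\mV^\Delta$, which is what actually exploits the hypothesis. Your function-field/Galois-theoretic framing is a reasonable alternative heuristic, but as written it has no mechanism for producing the specific homogeneous polynomials cutting out $\mV_4^{bad}\times\mV_2^{bad}(\bm{b})$; you would end up needing the same explicit coefficient analysis, and you would first need to repair the independence claim to accommodate repeated $b_j$.
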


We now proceed to establish Theorem \ref{lemBS2} conditional on Lemma \ref{thmK}, deferring the proof of Lemma \ref{thmK} to the subsequent section.
\begin{proof}[Proof of Theorem \ref{lemBS2}]
Note that $\K\left(s_i(r+b_j);p^2\right)$ vanishes unless $(s_i(r+b_j),p)=1$, a convention maintained through this proof.
Recall the representation
\[
\K\left(s_i(r+b_j);p^2\right)=\frac 1p\ssum_{x\ppmod {p^2}}e_{p^2}\l(s_ix+(r+b_j)\ol{x}\r).
\]
Through the parameterization
\[
s_i=x_i(1+y_ip),\quad r=u+vp,\quad x=x_{ij}(1+y_{ij}p),
\]
where $x_{ij}, y_{ij}, x_i, y_j, u, v$ are defined modulo $p$ with $(x_i(u+b_j)x_{ij},p)=1$, we have
\begin{align*}
\K\left(s_i(r+b_j);p^2\right)=&\frac1p\ssum_{x_{ij}\ppmod p}e_{p^2}\l(x_ix_{ij}+(u+b_j)\ol{x}_{ij}\r)e_p\l(x_iy_ix_{ij}+v\ol{x}_{ij}\r)\\
&\times\sum_{y_{ij}\ppmod p}e_p\l(y_{ij}(x_ix_{ij}-(u+b_j)\ol{x}_{ij})\r).
\end{align*}
The $y_{ij}$ summation vanishes unless $p\mid (x_ix_{ij}-(u+b_j)\ol{x}_{ij})$, reducing to $p$ when nonvanishing. Consequently,
\begin{align*}
\K\left(s_i(r+b_j);p^2\right)=\ssum_{\substack{x_{ij}\ppmod p\\ x_ix_{ij}\equiv (u+b_j)\ol{x}_{ij}\ppmod p}}e_{p^2}\l(x_ix_{ij}+(u+b_j)\ol{x}_{ij}\r)e_p\l(x_iy_ix_{ij}+v\ol{x}_{ij}\r).
\end{align*}
Substituting this and the identity
\[
e_{p^2}(h_is_i)=e_{p^2}(h_ix_i)e_p(h_ix_iy_i)
\]
into $\mathfrak{S}(\bm{b},\bm{h},1;p^2)$ shows
\begin{equation}\label{eqS211}
\begin{aligned}
\mathfrak{S}(\bm{b},\bm{h},1;p^2)=&\sum_{u\ppmod p}\mathop{\ssum\cdots\ssum}_{\substack{x_i, x_{ij}, (i=1,2; j=1,2,3,4)\\ x_i x_{ij}\equiv (u+b_j)\ol{x}_{ij}\ppmod p\\ x_1\neq x_2}}e_{p^2}\B(\sum_{i=1}^2\B(h_ix_i+\sum_{j=1}^4 \l(x_i x_{ij}+(u+b_j)\ol{x}_{ij}\r)\B)\B)\\
&\mathop{\sum\sum}_{y_1,y_2\ppmod p}e_p\B(\sum_{i=1}^2x_iy_i\B(\sum_{j=1}^4x_{ij}+h_i\B)\B)\sum_{v\ppmod p}e_p\B(v\sum_{i=1}^2\sum_{j=1}^4\ol{x}_{ij}\B),
\end{aligned}
\end{equation}
where the product of the sums over $y_i$ and $v$ vanish unless
\[
\sum_{j=1}^4x_{ij}+h_i\equiv0\pmod p \quad \text{and}\quad  \sum_{i=1}^2\sum_{j=1}^4\ol{x}_{ij}\equiv0 \pmod p.
\]
Applying this to \eqref{eqS211} yields
\begin{align*}
\mathfrak{S}(\bm{b},\bm{h},1;p^2)\ll p^3\mathcal{K}(\bm{b},\bm{h};p),
\end{align*}
which, combined with Lemma \ref{thmK}, establishes Theorem \ref{lemBS2}.
\end{proof}

\section{Counting rational points on the variety }
\label{variety-sec}
This section is devoted to the proof of Lemma \ref{thmK}. Our strategy is to reduce, in several steps, the problem of counting the vectors $\bm{v}$ on the variety $\mathcal{V}_{11}$ (subject to the additional conditions \eqref{additional-conditions}), first to the problem of counting the corresponding triples $(u,x_{11},x_{21})$, and then just to counting the corresponding values of $u$.
We analyze the relations for $u$ through polynomial substitution, which generically admits
$O(1)$ solutions. Degeneracy occurs precisely when both $\bm{b}$ and $\bm{h}$ lie on the varieties defined by homogeneous polynomials of bounded degree as indicated in the statement of Lemma~\ref{thmK}.

We remark that the statement of Lemma~\ref{thmK} is trivially true for primes $p$ of bounded size, and that therefore, throughout this section, we may (and will) assume that $p$ is larger than a certain absolute lower bound, so that various absolute positive integer constants appearing in our arguments will be units modulo $p$.

\subsection{Reduction to solutions of polynomials}
From \eqref{eqce1}, we establish the relations
\begin{align}\label{eqxx}
\frac{x_{ij}}{x_{i1}}=\pm\l(\frac{u+b_j}{u+b_1}\r)^{\frac12} \quad\text{and}\quad \frac{x_{2j}}{x_{1j}}=\pm \l(\frac{x_1}{x_2}\r)^{\frac12}
\end{align}
for $1\le i\le 2, 1\le j\le4$, where we fix an arbitrary branch of the square root from $\mathbb{F}_p^{\times 2}\to\mathbb{F}_p^{\times}$, and merely using the square root notation indicates that the argument lies in $\mathbb{F}_p^{\times 2}$ (and thus the square root is defined) without additional notice. Substituting these relations into \eqref{eqce2} and \eqref{eqce3} produces the system
\begin{subequations}\begin{align}
&x_{11}f_1(u)+h_1=0;\label{eqce2'}\\
&x_{21}f_2(u)+h_2=0;\label{eqce2''}\\
&\ol{x}_{11}g_1(u)+\ol{x}_{21}g_2(u)=0,\label{eqce3'}
\end{align}
\end{subequations}
where the functions are defined as
\begin{align}\label{deffi}
f_i(u)=Q_i\B(1,\l(\frac{u+b_2}{u+b_1}\r)^{\frac12},\l(\frac{u+b_3}{u+b_1}\r)^{\frac12},\l(\frac{u+b_4}{u+b_1}\r)^{\frac12}\B),
\end{align}
\begin{align}\label{defgi}
g_i(u)=Q_i\B(1,\l(\frac{u+b_1}{u+b_2}\r)^{\frac12},\l(\frac{u+b_1}{u+b_3}\r)^{\frac12},\l(\frac{u+b_1}{u+b_4}\r)^{\frac12}\B)
\end{align}
with
$Q_i(x,y,z,w)\in \{x\pm y\pm z\pm w\}$.

Let $\mathcal{K}_1(\bm{b},\bm{h};p)$ denote the number of triples $(u,x_{11},x_{21})\in\mathbb{F}_p^3$ satisfying one of the $O(1)$ instances of the system \eqref{eqce2'}--\eqref{eqce3'},
with $f_i$, $g_i$ as in \eqref{deffi}--\eqref{defgi}. We have proved that, for every tuple $\bm{v}\in\mathcal{V}_{11}$ counted by $\mathcal{K}(\bm{b},\bm{h};p)$ and labeled as in \eqref{11tuple}, the triple $(u,x_{11},x_{21})$ is among the triples counted by $\mathcal{K}_1(\bm{b},\bm{h};p)$. On the other hand, the
relations \eqref{eqce1} and \eqref{eqxx} demonstrate that, for given parameters $\bm{h}, \bm{b}$, all remaining variables in $\bm{v}$ are determined by the triple $(u, x_{11}, x_{21})$ up to $O(1)$ choices.
From this it follows that
\[ \mathcal{K}(\bm{b},\bm{h};p)\asymp\mathcal{K}_1(\bm{b},\bm{h};p), \]
and it suffices to bound the latter count.

Moreover, for a given $u$, the value of $x_{11}$ and $x_{21}$ can be resolved from \eqref{eqce2'}--\eqref{eqce3'}, except in the degenerate case where at least two of $f_1(u), f_2(u)$,
$g_1(u)$, and $g_2(u)$ vanish simultaneously.
The following subsection \S\ref{counting-solutions-subsec} is devoted to determining $u$ for which this happens,
with analysis proceeding
in two cases. When either $h_1=0$ or $h_2=0$,
$u$ will be determined to within $O(1)$ choices by the corresponding $f_i(u)=0$. For non-zero $h_1$
and $h_2$, substituting \eqref{eqce2'} and \eqref{eqce2''} into \eqref{eqce3'} yields
\[
\ol{h}_1f_1(u)g_1(u)+\ol{h}_2f_2(u)g_2(u)=0,
\]
which will similarly determine $u$ up to $O(1)$ choices outside degenerate cases. We put everything together and prove Lemma~\ref{thmK} in \S\ref{proof-lemma-subsec}.

\subsection{Lemmas for counting solutions of polynomials}
\label{counting-solutions-subsec}
\begin{lemma}\label{lempower}
For any rational function $Q(x,y,z,w)$ in four variables, the product
\[
F(x,y,z,w)=\prod Q(\pm x^{\frac12},\pm y^{\frac12},\pm z^{\frac12},\pm w^{\frac12})
\]
taken over all sign combinations remains a rational function of $x,y,z,w$.
\end{lemma}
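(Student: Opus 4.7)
The plan is a short Galois-theoretic observation. Consider the Kummer extension
\[
K \;=\; k(x,y,z,w)\bigl(x^{1/2},y^{1/2},z^{1/2},w^{1/2}\bigr)
\]
of the rational function field $k(x,y,z,w)$. Since the base field has characteristic different from $2$ (which is harmless in the context of Lemma~\ref{thmK}, where $p$ is taken sufficiently large), this is a Galois extension with group $G \cong (\mathbb{Z}/2\mathbb{Z})^4$ whose four generators $\sigma_x,\sigma_y,\sigma_z,\sigma_w$ act by independently flipping the signs of the four square roots while fixing the others.

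First, I would note that, viewed as an element of the fraction field of $K$, the product
\[
F \;=\; \prod_{\bm{\varepsilon}\in\{\pm1\}^4} Q\bigl(\varepsilon_1 x^{1/2},\, \varepsilon_2 y^{1/2},\, \varepsilon_3 z^{1/2},\, \varepsilon_4 w^{1/2}\bigr)
\]
is well defined. Next, I would observe that applying $\sigma_x$ sends each factor indexed by $\varepsilon_1 = +1$ to the corresponding factor indexed by $\varepsilon_1 = -1$ (and vice versa), so $\sigma_x$ merely permutes the $16$ factors and hence fixes $F$. The same argument applies to $\sigma_y,\sigma_z,\sigma_w$. Thus $F$ is fixed by all of $G$, and by Galois theory lies in the fixed field $K^G = k(x,y,z,w)$---which is exactly to say that $F$ is a rational function in $x,y,z,w$.

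I do not anticipate any substantive obstacle; the only care needed is to interpret the manipulation in the fraction field so that possible poles or zeros of $Q$ do not interfere with the formal identities, and to note that the hypothesis char $\neq 2$ is automatic in the application. In effect, $F$ can be thought of as (a power of) the field-theoretic norm $N_{K/k(x,y,z,w)}(Q(x^{1/2},y^{1/2},z^{1/2},w^{1/2}))$, and its rationality is the standard descent property of such norms.
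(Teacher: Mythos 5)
Your proof is correct, but it takes a genuinely different route from the paper's. The paper argues elementarily: it first treats the polynomial case by forming $P(x,y,z,w)=\prod Q(\pm x,\pm y,\pm z,\pm w)$, observes the invariance $P(\epsilon_1 x,\epsilon_2 y,\epsilon_3 z,\epsilon_4 w)=P(x,y,z,w)$ (which follows since $P=\tfrac{1}{16}\sum P(\pm x,\pm y,\pm z,\pm w)$), concludes that every monomial of $P$ has only even exponents, and then sets $F(x,y,z,w)=P(x^{1/2},y^{1/2},z^{1/2},w^{1/2})$; the rational case is handled by applying this to numerator and denominator separately. Your Galois-theoretic argument packages the same underlying $(\mathbb{Z}/2\mathbb{Z})^4$ symmetry at the level of the Kummer extension $K=k(x,y,z,w)(x^{1/2},\dots,w^{1/2})$: since each generator $\sigma_x,\dots,\sigma_w$ permutes the $16$ factors, $F$ lies in the fixed field $K^G=k(x,y,z,w)$. (Indeed $F$ is exactly the norm $N_{K/k(x,y,z,w)}\big(Q(x^{1/2},y^{1/2},z^{1/2},w^{1/2})\big)$, not merely a power of it, assuming the coefficients of $Q$ live in $k$.) The trade-off: your approach is slicker, immediately conceptual, and generalizes to other group actions, but it relies on Kummer theory and requires noting that $x,y,z,w$ are algebraically independent so that $[K:k(x,y,z,w)]=16$ and ${\rm char}\,k\neq 2$ (both of which you address). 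The paper's approach is fully elementary, self-contained, and constructive — it exhibits $F$ explicitly as $P(x^{1/2},\dots)$, which is precisely what is used downstream in Lemmas~\ref{lemsolution1} and~\ref{lemsolution2}, where explicit coefficients of the resulting polynomial in $u$ are computed.
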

\begin{proof}
We first establish the polynomial case. Consider the auxiliary polynomial
\[
P(x,y,z,w)=\prod Q(\pm x,\pm y,\pm z,\pm w).
\]
The key observation is the invariance property
\begin{align*}
P(\epsilon_1 x,\epsilon_2 y,\epsilon_3 z,\epsilon_4 w)=P(x,y,z,w)\quad \text{for any}\ \epsilon_i=\pm1.
\end{align*}
This symmetry implies that all monomials in $P(x,y,z,w)$ must contain even powers of each variable; indeed, this is clear from $P(x,y,z,w)=(1/16)\sum P(\pm x,\pm y,\pm z,\pm w)$. Therefore, the substitution $x\rightarrow x^{\frac12}$, $y\rightarrow y^{\frac12}$, $z\rightarrow z^{\frac12}$, $w\rightarrow w^{\frac12}$ yields a well-defined polynomial
\[
F(x,y,z,w)=P(x^{\frac12}, y^{\frac12}, z^{\frac12}, w^{\frac12}).
\]
The rational case follows by considering numerator and denominator polynomial separately.
\end{proof}

\begin{lemma}\label{lemsolution1}
For any given $\bm{b}\notin \mV^\Delta$, both
each one of the congruence equations
\begin{align}\label{eqQ1}
f_i(u)&\equiv0\pmod p,\\
\label{eqQ1'}
g_i(u)&\equiv0 \pmod p
\end{align}
possess $O(1)$ solutions.
\end{lemma}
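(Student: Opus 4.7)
The plan is to apply Lemma \ref{lempower} to rationalize both $f_i$ and $g_i$, reducing each congruence to a polynomial equation in $u$ of absolutely bounded degree, and then to rule out identical vanishing via linear independence of square roots in a multiquadratic extension of $\bbm{F}_p(u)$.

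For $f_i$, I would set
\[
\mathcal{F}_i(u) := \prod_{\epsilon_2,\epsilon_3,\epsilon_4 \in \{\pm 1\}} \Bigl(1 + \epsilon_2 \sqrt{\tfrac{u+b_2}{u+b_1}} + \epsilon_3 \sqrt{\tfrac{u+b_3}{u+b_1}} + \epsilon_4 \sqrt{\tfrac{u+b_4}{u+b_1}}\Bigr),
\]
which by Lemma \ref{lempower} is a rational function in $u$ whose denominator is a power of $u+b_1$; clearing it would produce a polynomial $P_i(u) \in \bbm{F}_p[u]$ of degree bounded by an absolute constant. Since $f_i(u)$ is one of the factors in this product, $f_i(u)\equiv 0 \pmod p$ forces $P_i(u)\equiv 0 \pmod p$, and the desired $O(1)$ bound would follow provided $P_i\not\equiv 0$.

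To establish $P_i\not\equiv 0$ under the hypothesis $\bm{b}\notin\mV^\Delta$, I would argue that vanishing of $\mathcal{F}_i$ in the integral domain $L := \bbm{F}_p(u)(\sqrt{u+b_1},\sqrt{u+b_2},\sqrt{u+b_3},\sqrt{u+b_4})$ forces one of its factors to vanish there; rescaling that factor by $\sqrt{u+b_1}$ yields an identity
\[
\sum_{i=1}^{4}\epsilon_i\sqrt{u+b_i}=0 \quad\text{in }L,\qquad \epsilon_i\in\{\pm 1\}.
\]
Letting $c_1,\ldots,c_k$ denote the distinct values among $b_1,\ldots,b_4$ with multiplicities $m_1,\ldots,m_k$, the surds $\sqrt{u+c_j}$ are $\bbm{F}_p(u)$-linearly independent: the polynomials $u+c_1,\ldots,u+c_k$ are distinct monic irreducibles in $\bbm{F}_p[u]$, so their classes in $\bbm{F}_p(u)^\times/(\bbm{F}_p(u)^\times)^2$ are $\bbm{F}_2$-independent (for $p>2$, covered by the standing assumption that $p$ is sufficiently large). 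Regrouping the identity as $\sum_{j=1}^{k}\bigl(\sum_{i:\,b_i=c_j}\epsilon_i\bigr)\sqrt{u+c_j}=0$ would then force each $\sum_{i:\,b_i=c_j}\epsilon_i$, a sum of $m_j$ terms $\pm 1$, to vanish, whence each $m_j$ must be even, i.e.\ $\bm{b}\in\mV^\Delta$, a contradiction.

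For $g_i$ I would proceed in parallel: multiplying $g_i(u)=0$ by $\sqrt{(u+b_2)(u+b_3)(u+b_4)}$ and factoring out $\sqrt{R}$ with $R=\prod_{i=1}^{4}(u+b_i)$ converts the equation into $\sum_{i=1}^{4}\epsilon_i/\sqrt{u+b_i}=0$; using $1/\sqrt{u+b_i}=\sqrt{u+b_i}/(u+b_i)$ then yields $\sum_i\epsilon_i(u+b_i)^{-1}\sqrt{u+b_i}=0$, a linear relation with nonzero coefficients in $\bbm{F}_p(u)^\times$, to which the same linear-independence argument applies and again forces $\bm{b}\in\mV^\Delta$. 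The main technical point---and the heart of the plan---is thus the linear independence of the distinct surds $\sqrt{u+c_j}$ over $\bbm{F}_p(u)$; this is standard Kummer theory for $\bbm{F}_p(u)$ but relies essentially on $p>2$.
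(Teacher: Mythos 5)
Your argument is correct, and it reaches the same conclusion as the paper's by a genuinely different and more conceptual route. Both you and the paper begin by invoking Lemma~\ref{lempower} to rationalize $f_i$ (respectively $g_i$) into a polynomial $P_i(u)$ of absolutely bounded degree. The paper then computes $P_i$ explicitly (with machine assistance, as its footnote acknowledges): for \eqref{eqQ1} it finds a \emph{linear} equation $c_1u + c_0 = 0$ with $c_1 = 8(b_1{+}b_2{-}b_3{-}b_4)(b_1{-}b_2{+}b_3{-}b_4)(b_1{-}b_2{-}b_3{+}b_4)$, and checks directly that the leading coefficients cannot all vanish off $\mV^\Delta$; for \eqref{eqQ1'} it likewise obtains a degree-$9$ equation and inspects $c_9$, $c_8$. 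You instead sidestep the explicit coefficient computation entirely: you argue that $P_i\equiv 0$ forces one factor of the sign-product to vanish in the integral domain $L=\bbm{F}_p(u)(\sqrt{u+b_1},\ldots,\sqrt{u+b_4})$, which after rescaling yields a relation $\sum_j\epsilon_j\sqrt{u+b_j}=0$ (or $\sum_j\epsilon_j(u+b_j)^{-1}\sqrt{u+b_j}=0$ in the $g_i$ case), and then invoke Kummer-theoretic linear independence of the surds $\sqrt{u+c_j}$ over $\bbm{F}_p(u)$ (the $u+c_j$ being distinct monic primes in $\bbm{F}_p[u]$, hence $\bbm{F}_2$-independent in $\bbm{F}_p(u)^\times/(\bbm{F}_p(u)^\times)^2$ for $p>2$) to force each grouped coefficient $\sum_{i:\,b_i=c_j}\epsilon_i$ to vanish; as each such sum is an integer of absolute value $\le 4$ and $p$ is large, it vanishes in $\mathbb{Z}$, whence each multiplicity $m_j$ is even and $\bm{b}\in\mV^\Delta$. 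Your regrouping-and-parity step is exactly what makes $\mV^\Delta$ the correct exceptional set, and the argument handles $f_i$ and $g_i$ uniformly. The trade-off: the paper's computation delivers the precise degrees (which are genuinely useful in the subsequent, more delicate Lemma~\ref{lemsolution2}, where explicit coefficients must be tracked), while your argument is computation-free and makes the role of the hypothesis $\bm{b}\notin\mV^\Delta$ transparent. Both correctly rely on the standing assumption that $p$ exceeds an absolute constant.
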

\begin{proof}
Applying Lemma \ref{lempower}, we eliminate all half-powers through the symmetric polynomial construction
\begin{align*}
F(x,y,z,w)&=\prod Q_i(\pm x^{\frac12},\pm y^{\frac12},\pm z^{\frac12},\pm w^{\frac12})\\
&=64xyzw-\l((x+y-z-w)^2-4(xy+zw)\r)^2.
\end{align*}
For equation \eqref{eqQ1}, solutions must satisfy
\[
F(u+b_1,u+b_2,u+b_3,u+b_4)=0,
\]
which, by a straightforward calculation, simplifies to the linear equation\footnote{This equation as well as other equations in the next lemma can be calculated by math softwares such as \emph{Mathematica} easily.}
\[
c_1u+c_0=0
\]
 with the coefficient
\[
c_1=8(b_1+b_2-b_3-b_4)(b_1-b_2+b_3-b_4)(b_1-b_2-b_3+b_4)
\]
and $c_0$ a certain explicit degree 6 form in $b_i$. We claim that, for $\bm{b}\not\in\mathcal{V}^{\Delta}$, we cannot have $c_1=0$ and $c_0=0$ simultaneously.
The symmetry of $F(u+b_1,u+b_2,u+b_3,u+b_4)$ in $b_i$ produces the symmetry for the coefficients $c_0$ and $c_1$.
Without loss of generality, we apply $b_1+b_2-b_3-b_4=0$ in place of $c_1=0$ to the expression of $c_0$, and it follows that
\[
c_0=-16(b_1-b_3)^2(b_2-b_3)^2\neq0
\]
unless $\bm{b}\in\mV^\Delta$. Thus the two coefficients cannot vanish simultaneously, yielding at most one solution for $u$.

For \eqref{eqQ1'}, the transformed equation
\begin{align}\label{eqF-}
F\l((u+b_1)^{-1},(u+b_2)^{-1},(u+b_3)^{-1},(u+b_4)^{-1}\r)=0
 \end{align}
reduces to a ninth-degree equation
\[
c_9u^9+c_8u^8+\cdots=0,
\]
where the leading coefficient
\[
c_9=-8(b_1+b_2-b_3-b_4)(b_1-b_2+b_3-b_4)(b_1-b_2-b_3+b_4).
\]
Applying $b_1+b_2-b_3-b_4=0$ to the expression of $c_8$ gives
\[
c_8=48(b_1-b_3)^2(b_2-b_3)^2\neq0
\]
for $\bm{b}\notin\mV^\Delta$.
The fact that $c_9$ and $c_8$ cannot vanish simultaneously yields at most nine solutions for $u$.

\end{proof}

\begin{lemma}\label{lemsolution2}
Given the function definitions $f_i$, $g_i$ from \eqref{deffi} and \eqref{defgi} with distinct $Q_1\neq Q_2$, for $\bm{b}\notin\mV^\Delta$, the equation
\begin{align}\label{eqQ11}
\ol{h}_1f_1(u)g_1(u)+\ol{h}_2f_2(u)g_2(u)=0
\end{align}
admits $O(1)$ solutions except when $(\bm{b},\bm{h})\in \mV_4^{bad}\times\mV_2^{bad}(\bm{b})$. Here $\mV_4^{bad}\subset\mathbb{F}_p^4$ denotes an algebraic variety determined by a homogeneous polynomial of bounded degree, and $\mV_2^{bad}(\bm{b})\subset\mathbb{F}_p^2$ represents a variety specified by  $O(1)$ linear homogeneous polynomials for given $\bm{b}$.
\end{lemma}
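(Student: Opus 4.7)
The strategy mirrors that of Lemma~\ref{lemsolution1}: the plan is to clear the square roots $\alpha_j := \sqrt{(u+b_j)/(u+b_1)}$ appearing in $f_i, g_i$ by forming a symmetric Galois product, thereby reducing \eqref{eqQ11} to a polynomial identity in $u$ of bounded degree. Set $E(u) = \ol{h}_1 f_1(u) g_1(u) + \ol{h}_2 f_2(u) g_2(u)$ and form
\[
P(u) = \prod_{\epsilon \in \{\pm 1\}^3} E^{(\epsilon)}(u),
\]
where $E^{(\epsilon)}(u)$ is obtained from $E(u)$ by simultaneously substituting $\alpha_j \mapsto \epsilon_j \alpha_j$ for $j=2,3,4$ (consistently in each $f_i$ and $g_i$). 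Since $P(u)$ is invariant under each involution $\alpha_j \mapsto -\alpha_j$, the argument underpinning Lemma~\ref{lempower} shows that $P(u)$ is a rational function of $u$ alone. Clearing the denominator (a power of $\prod_j(u+b_j)$) produces a polynomial $\tilde P(u)$ in $u$ of degree $D = O(1)$, with coefficients polynomial in $\bm{b}$ and of degree $O(1)$ in $(\ol{h}_1,\ol{h}_2)$. Every $u\in\mathbb{F}_p$ solving \eqref{eqQ11} is a root of $\tilde P(u)$, which yields $O(1)$ solutions whenever $\tilde P(u)\not\equiv 0$.

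The task then reduces to identifying the degenerate locus $\{\tilde P \equiv 0\}$. In the function field $\mathbb{F}_p(u)(\alpha_2,\alpha_3,\alpha_4)$, $\tilde P \equiv 0$ iff some factor $E^{(\epsilon)}(u) \equiv 0$. Writing $E^{(\epsilon)}(u) = \ol{h}_1 A_\epsilon(u) + \ol{h}_2 B_\epsilon(u)$, such an identity with $(\ol{h}_1,\ol{h}_2) \neq (0,0)$ forces $A_\epsilon$ and $B_\epsilon$ to be $\mathbb{F}_p$-proportional as rational functions of $u$. Expanding
\begin{align*}
f_i^{(\epsilon)}(u) g_i^{(\epsilon)}(u) &= 4 + \sum_{j=2}^4 \sigma_{i,j}\epsilon_j \cdot \frac{2u+b_1+b_j}{\sqrt{(u+b_1)(u+b_j)}}\\
&\quad + \sum_{2\leq j<k\leq 4} \sigma_{i,j}\sigma_{i,k}\epsilon_j\epsilon_k \cdot \frac{2u+b_j+b_k}{\sqrt{(u+b_j)(u+b_k)}},
\end{align*}
where $(\sigma_{i,j})$ encodes the sign pattern of $Q_i$, we can read off the coefficients of $A_\epsilon, B_\epsilon$ in the $\mathbb{F}_p(u)$-basis indexed by the radicals $1/\sqrt{(u+b_j)(u+b_k)}$; proportionality then translates into the vanishing of the $2\times 2$ minors of the resulting coefficient matrix, which are polynomial equations in $\bm{b}$ of bounded degree. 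Taking the product of one such nonzero defining polynomial over the $2^3=8$ sign choices $\epsilon$ yields a single homogeneous polynomial of bounded degree in $\bm{b}$ cutting out $\mathcal{V}_4^{bad}$.

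For $\bm{b} \in \mathcal{V}_4^{bad}$, each $\epsilon$ witnessing a degeneracy forces the ratio $\ol{h}_2/\ol{h}_1 = -A_\epsilon/B_\epsilon$ to a specific scalar in $\mathbb{F}_p$, so the bad values of $(\ol{h}_1,\ol{h}_2)$ lie on one of $O(1)$ lines through the origin, cut out by $O(1)$ linear homogeneous polynomials defining $\mathcal{V}_2^{bad}(\bm{b})$. The principal obstacle we anticipate is this last step: verifying by explicit algebra that, under the hypotheses $Q_1 \neq Q_2$ and $\bm{b} \notin \mathcal{V}^\Delta$, the proportionality conditions are not trivially satisfied, so that $\mathcal{V}_4^{bad}$ is a proper subvariety. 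As in the proof of Lemma~\ref{lemsolution1}, this should follow by specializing at symmetric relations such as $b_1+b_2-b_3-b_4=0$ (and the analogous ones obtained by permuting indices) in the defining polynomial and checking that the resulting expression does not vanish identically, while accounting for the finitely many distinct sign patterns possible for $Q_1$ versus $Q_2$.
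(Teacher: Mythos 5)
Your proposal follows the same high-level strategy as the paper: use the Galois-type symmetrization of Lemma \ref{lempower} to clear the square roots from $E(u)=\ol{h}_1f_1g_1+\ol{h}_2f_2g_2$, obtain a polynomial $\tilde P(u)$ of bounded degree annihilated by every solution of \eqref{eqQ11}, and then characterize the locus where $\tilde P$ degenerates. Where you diverge is in how the degeneracy locus is pinned down. The paper, after forming the rational function $Q(x,y,z,w)=\ol{h}_1Q_1Q_1(x^{-1},\ldots)+\ol{h}_2Q_2Q_2(x^{-1},\ldots)$ and observing that up to permuting $b_j$ there are exactly two cases (one versus two sign differences between $Q_1$ and $Q_2$), carries out an \emph{explicit} (computer-assisted) expansion of $F(u+b_1,\ldots,u+b_4)$ in $u$. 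It computes the top coefficients — $c_{24}$, $c_{23}=0$, $c_{22}$ in one case, and $c_{32},\ldots,c_{28}$ in the other — in terms of $\bm{b}$ and the auxiliary scalar $t=(\ol{h}_1+\ol{h}_2)/(\ol{h}_1-\ol{h}_2)$, and then verifies directly that these coefficients cannot all vanish unless $t$ (hence $\bm{h}$, via a linear homogeneous condition) takes one of $O(1)$ specific $\bm{b}$-dependent values \emph{and} $\bm{b}$ lies on an explicit hypersurface of bounded degree. That computation is the substance of the proof, and it is what supplies the explicit homogeneous polynomial cutting out $\mV_4^{bad}$ and the $O(1)$ linear forms cutting out $\mV_2^{bad}(\bm{b})$.

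Your proposal replaces this computation with an abstract argument: $\tilde P\equiv 0$ forces some Galois conjugate $E^{(\epsilon)}$ to vanish identically, which forces $A_\epsilon=f_1^{(\epsilon)}g_1^{(\epsilon)}$ and $B_\epsilon=f_2^{(\epsilon)}g_2^{(\epsilon)}$ to be $\mathbb{F}_p$-proportional, which in turn gives determinantal (minor) conditions on $\bm{b}$ and a linear condition on $\bm{h}$. This is a reasonable blueprint, but it leaves the crux — that these conditions define a \emph{proper} subvariety, i.e.\ that under $Q_1\neq Q_2$ and $\bm{b}\notin\mV^\Delta$ at least one of the chosen minors is a nonzero polynomial in $\bm{b}$ — completely unverified, as you yourself flag. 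This is precisely what the explicit coefficient computations in the paper accomplish, and there is no soft way around it: the answer is genuinely sensitive to the sign pattern $(\sigma_{1},\sigma_2)$, as can be seen from the paper's two quite different defining polynomials, $(b_1+b_2-b_3-b_4)(b_1-b_2+b_3-b_4)(b_1-b_2-b_3+b_4)$ in the two-sign case versus the degree-$2$ form $b_1^2-6b_1b_2+\cdots-15b_4^2$ in the one-sign case. Two secondary points also need tightening: (i) the step ``$\tilde P\equiv 0$ iff some $E^{(\epsilon)}\equiv 0$'' presupposes that $\mathbb{F}_p(u)(\alpha_2,\alpha_3,\alpha_4)$ is a domain (degree-$8$ extension), which fails when some $b_i=b_j$ — a case not excluded by $\bm{b}\notin\mV^\Delta$ — so those lower-dimensional degenerations need separate treatment; (ii) proportionality as elements of the function field does not automatically translate into a \emph{single} bounded-degree polynomial condition on $\bm{b}$ (the full proportionality locus is an intersection of minor hypersurfaces), so your ``pick one nonzero minor per $\epsilon$ and multiply'' step is exactly where the missing nontriviality check must go. Without that explicit verification, the proof is incomplete.
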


\begin{proof}
Building upon Lemma \ref{lemsolution1}, we focus our analysis on the non-degenerate case where $f_1(u)f_2(u)g_1(u)g_2(u)\not=0$. To eliminate half-powers terms, we apply Lemma \ref{lemsolution2} by constructing the symmetric polynomial
\[
F(x,y,z,w)=\prod Q(\pm x^{\frac12},\pm y^{\frac12},\pm z^{\frac12},\pm w^{\frac12}),
\]
where the rational function
\[
Q(x,y,z,w)=\ol{h}_1Q_1(x,y,z,w)Q_1\l(x^{-1},y^{-1},z^{-1},w^{-1}\r)+\ol{h}_2Q_2(x,y,z,w)Q_2\l(x^{-1},y^{-1},z^{-1},w^{-1}\r).
\]
Consequently, all solutions to equation \eqref{eqQ11} must satisfy the transformed equation
\begin{align}\label{eqeqF}
F(u+b_1,u+b_2,u+b_3,u+b_4)=0.
\end{align}

When ignoring variables orders, the expression for $Q$ (and consequently $F$) is determined exclusively by the parity of sign differences between $Q_1, Q_2\in\{x\pm y\pm z\pm w\}$. This leads us to partition equation \eqref{eqeqF} into two cases according to this sign difference count.

\emph{Case I:  two different signs.} Without loss of generality, we consider the specific configuration $Q_1(x,y,z,w)=x+y+z+w$ and $Q_2(x,y,z,w)=x+y-z-w$ in this case.
A direct calculation yields
\begin{align*}
Q=(\ol{h}_1+\ol{h}_2)\l(4+\frac{x}{y}+\frac{y}{x}+\frac{z}{w}+\frac{w}{z}\r) +(\ol{h}_1-\ol{h}_2)\l(\frac{x}{z}+\frac{z}{x}+\frac{x}{w}+\frac{w}{x}+\frac{y}{z}+\frac{z}{y}+\frac{y}{w}+\frac{w}{y}\r),
\end{align*}
where $\ol{h}_1+\ol{h}_2$ and $\ol{h}_1-\ol{h}_2$ are not simultaneously zero.

When $\ol{h}_1+\ol{h}_2=0$, expansion of \eqref{eqeqF} yields a $24$th-degree polynomial
\[
c_{24}u^{24}+c_{23}u^{23}+\cdots= 0
\]
with the leading coefficient
\[
c_{24}=1048576\l((b_1-b_2)^2+(b_3-b_4)^2\r)^4.
\]
Thus, $u$ admits at most $24$ distinct solutions unless $\bm{b}$ satisfies $(b_1-b_2)^2+(b_3-b_4)^2=0$.

For $\ol{h}_1-\ol{h}_2=0$, simplification of \eqref{eqeqF} gives an $8$th-degree polynomial
\[
c_{8}u^{8}+c_{7}u^{7}\cdots=0,
\]
where
\[
c_8=(b_1-b_2)^8(b_1+b_2-b_3-b_4)^8(b_3-b_4)^8.
\]
Hence, $u$ has at most $8$ solutions unless $(b_1-b_2)(b_1+b_2-b_3-b_4)(b_3-b_4)=0$.

When $(\ol{h}_1+\ol{h}_2)(\ol{h}_1-\ol{h}_2)\not=0$, we define
\begin{align}\label{eqdeft}
t=\frac{\ol{h}_1+\ol{h}_2}{\ol{h}_1-\ol{h}_2},
\end{align}
where $t\notin \{0,\pm 1\}$. Treating $t$ and $\bm{b}$ as parameters, a direct calculation shows that \eqref{eqeqF} reduces to a $24$th-degree polynomial equation
\[
c_{24}u^{24}+c_{23}u^{23}+c_{22}u^{22}+\cdots=0
\]
with the leading coefficient
\[
c_{24}=1048576t^8(t^2-1)^2\l(\l((b_1-b_2)^2+(b_3-b_4)^2\r)^2t^2-4(b_1-b_2)^2(b_3-b_4)^2\r)^2.
\]
For $\bm{b}\notin\mV^\Delta$ and $t\not= 0,\pm 1$, not both the $t^2$ and constant term coefficients in the final factor above can vanish simultaneously, and so the condition $c_{24}=0$ holds precisely when $\bm{h}$ satisfies
\begin{align}\label{eqtvalue}
t=\frac{\ol{h}_1+\ol{h}_2}{\ol{h}_1-\ol{h}_2}=\pm \frac{2(b_1-b_2)(b_3-b_4)}{(b_1-b_2)^2+(b_3-b_4)^2}.
\end{align}
Substituting \eqref{eqtvalue} into subsequent coefficients yields $c_{23}=0$ and
\begin{align*}
c_{22}=&4294967296\frac{(b_1-b_2)^{12}(b_3-b_4)^{12}}{(b_1-b_2)^2+(b_3-b_4)^{2}}\\
&\times (b_1+b_2-b_3-b_4)^2(b_1-b_2+b_3-b_4)^6(b_1-b_2-b_3+b_4)^6.
\end{align*}
Consequently, both $c_{24}$ and $c_{22}$ vanish simultaneously if and only if
\[
(b_1+b_2-b_3-b_4)(b_1-b_2+b_3-b_4)(b_1-b_2-b_3+b_4)=0
\]
and $\bm{h}$ satisfies \eqref{eqtvalue}. Otherwise, the variable $u$ admits at most
$24$ distinct values.

\emph{Case II:  one different sign.}
In this case, we take $Q_1(x,y,z,w)=x+y+z+w$ and $Q_2(x,y,z,w)=x+y+z-w$ for example. A direct calculation shows
\begin{align*}
Q=(\ol{h}_1+\ol{h}_2)\l(4+\frac{x}{y}+\frac{y}{x}+\frac{x}{z}+\frac{z}{x}+\frac{y}{z}+\frac{z}{y}\r) +(\ol{h}_1-\ol{h}_2)\l(\frac{x}{w}+\frac{w}{x}+\frac{y}{w}+\frac{w}{y}+\frac{z}{w}+\frac{w}{z}\r),
\end{align*}
where $\ol{h}_1+\ol{h}_2$ and $\ol{h}_1-\ol{h}_2$ also cannot vanish simultaneously.

After substituting this into \eqref{eqeqF}, a simplification yields
\[
40960000u^{32}+c_{31}u^{31}+\cdots=0
\]
when $\ol{h}_1+\ol{h}_2=0$, and it reduces to
\[
5308416u^{32}+c_{31}u^{31}+\cdots=0
\]
when $\ol{h}_1-\ol{h}_2=0$.
Consequently, we establish that $u$ admits at most $32$ possible values when $(\ol{h}_1+\ol{h}_2)(\ol{h}_1-\ol{h}_2)=0$.

Given $(\ol{h}_1+\ol{h}_2)(\ol{h}_1-\ol{h}_2)\not=0$, we employ the parameter $t$ as defined in \eqref{eqdeft}. A direct calculation then simplifies \eqref{eqeqF} to a $32$nd-degree polynomial equation
\[
c_{32}u^{32}+c_{31}u^{31}+\cdots=0
\]
where the leading coefficient satisfies
\[
c_{32}=65536(t^2-1)^6(25t^2-9)^2\neq 0
\]
except when $t=\pm 3/5$. Substituting $t=\pm 3/5$ into subsequent coefficients yields $c_{32}=\cdots=c_{29}=0$, with
\[
c_{28}=c\l(b_1^2-6b_1b_2+b_2^2-6b_1b_3-6b_2b_3+b_3^2+10b_1b_4+10b_2b_4+10b_3b_4-15b_4^2\r)^2,
\]
where $c=618475290624/6103515625$. Consequently, $u$ admits at most $32$ solutions except when $\bm{h}$ and $\bm{b}$ simultaneously satisfy
\[
5(\ol{h}_1+\ol{h}_2)=\pm 3(\ol{h}_1-\ol{h}_2)
\]
and
\[
b_1^2-6b_1b_2+b_2^2-6b_1b_3-6b_2b_3+b_3^2+10b_1b_4+10b_2b_4+10b_3b_4-15b_4^2=0.
\]
We thereby establish the lemma.
\end{proof}

\begin{lemma}\label{lemb1234}
For any vector $\bm{b}=(b_1,b_2,b_3,b_4)$ satisfying the congruence relations
\begin{align}
b_1+b_2\equiv b_3+b_4\pmod p,\label{eqb12341}\\
\ol{b}_1+\ol{b}_2\equiv \ol{b}_3+\ol{b}_4\pmod p,\label{eqb12342}
\end{align}
the squared tuple $\l(b_1^2, b_2^2, b_3^2, b_4^2\r)$ lies in the variety $\mV^\Delta$ modulo $p$.
\end{lemma}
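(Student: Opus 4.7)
\textbf{Proof plan for Lemma~\ref{lemb1234}.} The goal is to show that under the two hypotheses $b_1+b_2\equiv b_3+b_4$ and $\bar b_1+\bar b_2\equiv \bar b_3+\bar b_4$ modulo $p$ (which implicitly forces all $b_i\in\mathbb{F}_p^{\times}$), the multiset $\{b_1^2,b_2^2,b_3^2,b_4^2\}$ has every value occurring with even multiplicity; that is, either all four squares coincide, or they split into two equal pairs. I will split the argument into two short cases depending on whether the common sum $b_1+b_2$ vanishes mod $p$.

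\emph{Case 1:} $b_1+b_2\not\equiv 0\pmod p$. Rewriting the second hypothesis over a common denominator gives
\[
\frac{b_1+b_2}{b_1b_2}\equiv\frac{b_3+b_4}{b_3b_4}\pmod p,
\]
so the first hypothesis combined with $b_1+b_2\not\equiv 0$ forces $b_1b_2\equiv b_3b_4$. Thus the pairs $\{b_1,b_2\}$ and $\{b_3,b_4\}$ are the (multi-)sets of roots of the common monic quadratic $X^2-(b_1+b_2)X+b_1b_2\in\mathbb{F}_p[X]$, and so $\{b_1,b_2\}=\{b_3,b_4\}$ as multisets. Squaring yields $\{b_1^2,b_2^2\}=\{b_3^2,b_4^2\}$, which places $(b_1^2,b_2^2,b_3^2,b_4^2)$ in $\mathcal V^\Delta$.

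\emph{Case 2:} $b_1+b_2\equiv 0\pmod p$. Then $b_2\equiv -b_1$ and the first hypothesis forces $b_4\equiv -b_3$. Consequently $b_1^2\equiv b_2^2$ and $b_3^2\equiv b_4^2$, so each value in the squared multiset again appears an even number of times, and $(b_1^2,b_2^2,b_3^2,b_4^2)\in\mathcal V^\Delta$.

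The argument is elementary and presents no real obstacle; the only subtlety is recognizing that the two hypotheses force $\{b_1,b_2\}$ and $\{b_3,b_4\}$ to be root-sets of a single quadratic (under the generic condition $b_1+b_2\not\equiv 0$), with the degenerate subcase $b_1+b_2\equiv 0$ handled separately and directly.
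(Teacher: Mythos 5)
Your proof is correct and follows essentially the same two-case structure as the paper: handle $b_1+b_2\equiv 0$ directly, then in the generic case deduce $b_1b_2\equiv b_3b_4$ from the reciprocal-sum hypothesis and conclude that $\{b_1,b_2\}=\{b_3,b_4\}$ as multisets. The only cosmetic difference is that you package the last step as "equal sum and product means equal root multisets of a common monic quadratic," whereas the paper deduces $(b_1-b_2)^2\equiv(b_3-b_4)^2$ and then resolves the sign — the same observation, phrased differently.
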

\begin{proof}
When $b_1+b_2\equiv b_3+b_4\equiv0\pmod p$, the conclusion follows immediately. Otherwise, we deduce from \eqref{eqb12342} the relation
\[
\ol{b}_1\ol{b}_2(b_1+b_2)\equiv \ol{b}_1+\ol{b}_2\equiv \ol{b}_3+\ol{b}_4\equiv\ol{b}_3\ol{b}_4(b_3+b_4)\pmod p,
\]
implying $b_1b_2\equiv b_3b_4 \ (\bmod \ p)$. Combining with \eqref{eqb12341}, this establishes
\[
(b_1-b_2)^2\equiv(b_3-b_4)^2\pmod p.
\]
Consequently, either
\[
b_1-b_2+b_3-b_4\equiv 0\pmod p,
\]
or
\[
b_1-b_2-b_3+b_4\equiv 0\pmod p
\]
must be satisfied. The lemma follows by combining these with \eqref{eqb12341}.
\end{proof}

\subsection{Proof of Lemma \ref{thmK}}
\label{proof-lemma-subsec}
Lemma \ref{lemb1234} along with a moment's reflection shows that the system
\begin{align*}
\begin{cases}
f_i(u)=0,&\\
g_i(u)=0&
\end{cases}
\end{align*}
has no solution when $\bm{b}\notin\mV^\Delta$, implying $f_i(u)$ and $g_i(u)$ cannot both vanish in this case. We analyze $\mathcal{K}_1(\bm{b},\bm{h};p)$ by examining four cases classified according to the values of $h_i$.

When $h_1= h_2=0$, we obtain $f_1(u)=f_2(u)=0$ and thus $g_1(u)g_2(u)\not= 0$. By Lemma \ref{lemsolution1} and equation \eqref{eqce3'}, the variable $u$ can take at most
$O(1)$ values, while $x_{11}$, $x_{21}$ are uniquely determined by each other; thus, in this case,
\[
\mathcal{K}_1(\bm{b},\bm{h};p)\ll p.
\]

When $h_1=0\not=h_2$, we have $f_1(u)= 0 $ with $f_2(u)g_1(u)\not= 0$. Lemma \ref{lemsolution1} shows that $u$ can take at most $O(1)$ values, while $x_{11}$, $x_{21}$ are uniquely determined by equations \eqref{eqce2''} and \eqref{eqce3'}. Consequently,
\[
\mathcal{K}_1(\bm{b},\bm{h};p)\ll 1.
\]
The symmetric case $h_2=0\not=h_1$ yields an identical bound.

When $h_1h_2\not= 0$ with $g_1(u)g_2(u)=0$, the non-degeneracy condition $f_1(u)f_2(u)\not=0$ is automatically satisfied. Lemma \ref{lemsolution1} restricts $u$ to
$O(1)$ possibilities, while equations \eqref{eqce2'} and \eqref{eqce2''} uniquely determine $x_{11}$ and $x_{21}$ . This yields the bound
\[
\mathcal{K}_1(\bm{b},\bm{h};p)\ll 1.
\]

Given $h_1h_2\not=0$ and $g_1(u)g_2(u)\not=0$, the variables $x_{11}$ and $x_{21}$ are uniquely determined by equations \eqref{eqce2'} and \eqref{eqce2''} for fixed $u$, $\bm{h}$ and $\bm{b}$.
To determine $u$, we substitute \eqref{eqce2'} and \eqref{eqce2''} into \eqref{eqce3'}, yielding
\[
\ol{h}_1f_1(u)g_1(u)+\ol{h}_2f_2(u)g_2(u)=0.
\]
For $x_1\neq x_2$ with $g_1(u)g_2(u)\not=0$, equations \eqref{eqxx} and \eqref{eqce3'} ensure $g_1(u)\neq g_2(u)$, which implies $Q_1\neq Q_2$.
By Lemma \ref{lemsolution2}, $u$ admits at most
$O(1)$ values unless $(\bm{b},\bm{h})\in \mV_4^{bad}\times\mV_2^{bad}(\bm{b})$. Consequently, we obtain the bound
\begin{align*}
\mathcal{K}_1(\bm{b},\bm{h};p)\ll
\begin{cases}
p, & \text{if}\  (\bm{b},\bm{h})\in \mV_4^{bad}\times\mV_2^{bad}(\bm{b});\\
1, & \text{otherwise}.
\end{cases}
\end{align*}
Keeping in mind that $\mathcal{K}(\bm{b},\bm{h};p)\asymp\mathcal{K}_1(\bm{b},\bm{h};p)$, this completes the proof via exhaustive case analysis.

\section{Evaluation of the moment}
This section presents the asymptotic evaluation of moments for twisted $L$-function as stated in Theorem \ref{thmL}. We begin by performing several reduction steps and auxiliary estimates in \S\ref{reduction-subsec}, and we combine everything into a proof of Theorem~\ref{thmL} in \S\ref{thmLproof-subsec}.

\subsection{Reduction steps}
\label{reduction-subsec}
Following established methodology (see \cite[Section 6.1]{BFK+17a}), we outline the key steps below.

Using the approximate functional equation from (\cite[Theorem 5.3]{IK04}), we express the central values as a convergent series
\begin{align*}
L(1/2,f_1\otimes\chi)\ol{L(1/2,f_2\otimes\chi)}=&\mathop{\sum\sum}_{m,n\ge1}\frac{\lambda_1(m)\lambda_2(n)\chi(m)\ol{\chi}(n)}{(mn)^{\frac12}}V\l(\frac{mn}{q^2}\r)\\
&+\ve(f_1,f_2,\chi)\mathop{\sum\sum}_{m,n\ge1}\frac{\lambda_1(m)\lambda_2(n)\ol{\chi}(m)\chi(n)}{(mn)^{\frac12}}V\l(\frac{mn}{q^2}\r),
\end{align*}
where $\ve(f_1,f_2,\chi)=\pm 1$ denotes the root number, and $V(x)$ is a smooth function with rapid decay for $x\gg q^\ve$. Character orthogonality yields that the average over primitive characters reduces to combinations of
\[
\frac{1}{\vp^{*}(q)}\sum_{d\mid q}\mu\l(\frac qd\r)\frac{\vp(d)}{(MN)^{\frac12}}\sum_{\substack{m\equiv \pm n\ppmod d\\ (mn,q)=1}}\lambda_1(m)\lambda_2(n)V\l(\frac{mn}{q^2}\r).
\]

The main term in \eqref{eqmoment} originates from the diagonal term $m=n$, computable via Mellin inversion and contour shifting (see \cite[Section 3.1]{BM2015}).
This calculation requires no assumptions (such as factorability of $q$ or bounds toward the Ramanujan--Petersson conjecture) beyond those already stated in Theorem~\ref{thmL}.
The off-diagonal contribution requires bounding
\begin{equation}
\label{mBpm}
\mB^{\pm}(M,N):=\frac{1}{\vp^{*}(q)}\sum_{d\mid q}\mu\l(\frac qd\r)\frac{\vp(d)}{(MN)^{\frac12}}\sum_{\substack{m\equiv \pm n\ppmod d\\ (mn,q)=1\\ m\neq n}}\lambda_1(m)\lambda_2(n)W_1\l(\frac mM\r)W_2\l(\frac nN\r)
\end{equation}
for $N\ge M$ and $MN\le q^{2+\ve}$, where $W_1, W_2$ are test functions supported on $[1,2]$ with derivatives satisfying
$W_1^{(j)}, W_2^{(j)}\ll_j q^\ve$.

To begin with, we establish two crucial estimates for $\mB^{\pm}(M,N)$. Some of our estimates will be in terms of an admissible exponent $\theta\geqslant 0$ toward the Ramanujan--Petersson conjecture for Hecke--Maa\ss{} newforms $f$ of arbitrary level $\ell$, that is, an exponent such that the corresponding Hecke eigenvalues satisfy
\begin{equation}
\label{toward-RPC}
\lambda_f(n)\ll n^{\theta+\ve};
\end{equation}
currently, $\theta=\frac{7}{64}$ is known to be admissible by the work of Kim--Sarnak~\cite{Kim03}. Of course, the corresponding bound $O_{\ve}(n^{\ve})$ is trivial for Eisenstein series and known for holomorphic newforms by the work of Deligne~\cite{Del74}.

First, we have the trivial bound
\begin{align}\label{eqtbSMN}
\mB^{\pm}(M,N)\ll N^\theta\frac{(MN)^{\frac12}}{q}(MNq)^\ve
\end{align}
for $N\ge M\ge1$, derived via the Cauchy--Schwarz inequality using
\[
\lambda_2(n)\ll n^{\theta+\ve},\quad \sum_{m\le M}|\lambda_1(m)|^2\ll M.
\]

The second estimate applies specifically to balanced cases where $M$ and $N$ are comparable in size.
\begin{lemma}
\label{lemmaBmn}
For $N\ge M\ge1$ and $MN\le q^{2+\ve}$, we have
\begin{align}\label{eqbalancedS}
\mB^{\pm}(M,N)\ll \l(\frac{N}{M}\r)^{\frac14}q^{-\frac14+\ve}+\l(\frac{N}{M}\r)^{\frac12}q^{-\frac12+\ve}+q^{-\frac12+2\theta+\ve}.
\end{align}
\end{lemma}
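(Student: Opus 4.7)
The plan is to dualize $\mB^{\pm}(M,N)$ into a bilinear form in Kloosterman sums modulo $q$ via Voronoi summation, then invoke Theorem~\ref{thmmain} in the generic regime. This follows the standard shifted-convolution template of~\cite{BFK+17a,BM2015,KMS2017}, but now with Theorem~\ref{thmmain} supplying the decisive bilinear form bound without any factorability assumption on $q$.

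Concretely, for each divisor $d\mid q$ appearing in \eqref{mBpm}, I would detect the congruence $m\equiv\pm n\pmod d$ via the orthogonality of additive characters modulo $d$, isolate the trivial frequency $a\equiv 0$ (absorbed into the main term up to a bounded correction), and apply the GL$(2)$ Voronoi summation formula (level $1$) to the longer $n$-sum. This conjugates the twist $e_d(\mp an)$ to $e_d(\pm\bar{a}n')$ and replaces $n$ by a dual variable $n'$ of length $N^*:=d^2/N$, weighted by a Bessel/Hankel transform $\widetilde{W}_2$ of $W_2$. Resumming over $a\bmod d$ with $(a,d)=1$ yields a Kloosterman sum $\sqrt{d}\,\K(\pm m n';d)$, so
\[
\mB^{\pm}(M,N)\ll\frac{q^{\ve}}{\vp^*(q)(MN)^{1/2}}\sum_{d\mid q}\frac{\vp(d)}{d^{3/2}}\,\B|\sum_{m\sim M}\sum_{n'\sim N^*}\alpha_m\beta_{n'}\K(\pm mn';d)\B|,
\]
where $\alpha_m=\lambda_1(m)W_1(m/M)$ satisfies $\|\bm{\alpha}\|_2\ll M^{1/2+\ve}$ and $\beta_{n'}=\lambda_2(n')\widetilde{W}_2(n'/N^*)$ satisfies $\|\bm{\beta}\|_2\ll N^{*1/2+\theta+\ve}$ by \eqref{toward-RPC}.

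The bilinear form for $d$ comparable to $q$ is controlled by Theorem~\ref{thmmain} with lengths $M$ and $N^*\asymp q^2/N$, whose hypotheses \eqref{conditionMN} hold throughout the interior of the range $N\ge M$, $MN\le q^{2+\ve}$. In regimes where those conditions fail or Theorem~\ref{thmmain} yields no improvement over completion, I would fall back on the P\'olya--Vinogradov bound \eqref{eqPoly} and on the trivial bound underlying \eqref{eqtbSMN}. Optimizing these inputs across $d\mid q$ yields the three envelopes: $(N/M)^{1/4}q^{-1/4+\ve}$ (from the $q^{-1/4}$ term of \eqref{eqPoly} or equivalently the $(MN)^{-3/16}q^{11/64}$ term of \eqref{eqmain} after dualization), $(N/M)^{1/2}q^{-1/2+\ve}$ (from the $N^{*-1/2}q^{1/4}$ completion-threshold term of \eqref{eqPoly}), and $q^{-1/2+2\theta+\ve}$ (from the trivial dyadic estimate on the dual side together with \eqref{toward-RPC}, in the small-$M$ regime where no bilinear cancellation is available). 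Summation over $d\mid q$ costs only a $q^\ve$ factor by the divisor bound.

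The main obstacle will be uniform control across the full parameter range: verifying the hypotheses \eqref{conditionMN} where applicable, and cleanly interpolating between the three regimes responsible for the three terms in \eqref{eqbalancedS}, so that their envelope governs every $(M,N)$ with $N\ge M$ and $MN\le q^{2+\ve}$ without case splits leaking into the main theorem. A secondary technical nuisance is the rigorous extraction of the Voronoi weight $\widetilde{W}_2$, whose oscillation requires standard integration-by-parts arguments when $N^*$ sits near the completion threshold; this is routine but must be carried out carefully to avoid logarithmic losses that would spoil the power-saving exponent $1/216$ in Theorem~\ref{thmL}.
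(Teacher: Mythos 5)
Your approach of Voronoi dualization plus Theorem~\ref{thmmain}/P\'olya--Vinogradov is fundamentally mismatched to the regime that Lemma~\ref{lemmaBmn} actually covers. This lemma is the \emph{balanced-range} estimate, used in the proof of Theorem~\ref{thmL} precisely when $v-u\le 1-4\eta$, i.e.\ when $N/M$ is small --- and in particular it must give $\mB^{\pm}(M,N)\ll q^{-1/4+\ve}$ when $M\sim N\sim q$. But in that range your dualization gives $N^*=d^2/N\sim q$ for $d=q$, so the resulting bilinear form has \emph{both} lengths of size $q$: the hypothesis $MN^*\le q^{5/4}$ of \eqref{conditionMN} fails badly (it would need $N\gg Mq^{3/4}$), so Theorem~\ref{thmmain} does not apply; and falling back on \eqref{eqPoly}, the best savings factor in the bilinear form is $q^{-1/4}$, which after inserting the prefactor $\asymp q^{-3/2}$ and the norms $\|\bm\alpha\|_2\|\bm\beta\|_2(MN^*)^{1/2}\asymp q^2$ yields $\mB^{\pm}(M,N)\ll q^{1/4}$, off by a factor of $q^{1/2}$ from the claimed bound. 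Voronoi simply does not shorten the $n$-sum when $N\sim q$, so no bilinear-form estimate, however sharp, can rescue this strategy. (Also, a small slip: the Rankin--Selberg bound gives $\|\bm\beta\|_2\ll N^{*1/2+\ve}$ on the dual side; the exponent $\theta$ in \eqref{eqbalancedS} enters elsewhere, not as an extra power of $N^*$ in $\|\bm\beta\|_2$.)

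The Voronoi + bilinear-form mechanism you describe is in fact exactly what the paper deploys in \S\ref{thmLproof-subsec} for the complementary \emph{unbalanced} regime (where $v^*=2-v$ is small, so $N^*$ is genuinely short). The paper's proof of Lemma~\ref{lemmaBmn} itself takes an entirely different route: it works directly with the shifted-convolution-sum quantities $S_{N,M,d,q}$, $\mathcal{D}(\ell_1,\ell_2,h,N,M)$, $\mathcal{S}(\ell_1,\ell_2,d,N,M)$ of~\cite{BM2015}, proves refined versions of \cite[(3.11),(3.12)]{BM2015} (our \eqref{eq3.11'}, \eqref{eq3.12'}) from \cite[Proposition 8]{BM2015} and \cite[Theorem 1.3]{Blo2004}, and tracks the $\theta$-dependence carefully to remove all recourse to the Ramanujan--Petersson conjecture. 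If you want to prove Lemma~\ref{lemmaBmn}, you should follow that shifted-convolution path (spectral/delta-symbol estimates, the Rankin--Selberg bound applied to $|\lambda_j|^2$ on average, and the uniform bound \eqref{Blo04-bound}), not the bilinear-form path.
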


\begin{proof}
We will prove Lemma~\ref{lemmaBmn} by refining the arguments of \cite{BM2015} to remove all dependence on the Ramanujan--Petersson conjecture. For ease of reference, we recall the notations from \cite[(3.4)--(3.8)]{BM2015}:
\begin{align*}
S_{N,M,d,q}&=\frac{d}{(MN)^{1/2}}\sum_{\substack{m\equiv n\ppmod d\\(mn,q)=1\\m\neq n}}\lambda_1(m)\lambda_2(n)W_1\l(\frac{m}M\r)W_2\l(\frac{n}N\r),\\
\mathcal{D}(\ell_1,\ell_2,h,N,M)&=\sum_{\ell_1n-\ell_2m=h}\lambda_1(m)\lambda_2(n)W_1\l(\frac{\ell_2m}M\r)W_2\l(\frac{\ell_1n}N\r),\\
\mathcal{S}(\ell_1,\ell_2,d,N,M)&=\sum_r\mathcal{D}(\ell_1,\ell_2,rd,N,M).
\end{align*}
Bounds on $S_{N,M,d,q}$ directly lead to corresponding estimates on $\mathcal{B}^{\pm}(M,N)$. In particular, the estimate \eqref{eqbalancedS} follows from the combination of the following two bounds:
\begin{alignat}{3}
\label{eq3.11'}
S_{N, M, d, q}&\ll (qN)^{\varepsilon}  \B( \frac{(Nq)^{\frac12}}{M^{\frac12}} + \frac{N^{\frac34}  }{ M^{\frac14}} +  \frac{N^{\frac14} q^{\frac34}}{M^{\frac14} } + N^{\frac12}q^{\frac14} \B)&\qquad&(N\geqslant 20M),\\
\label{eq3.12'}
S_{N, M, d, q}&\ll \frac{q^{\theta+\ve}N^{1+\theta}}{M^{\frac12}}&&(N\geqslant M).
\end{alignat}
These bounds were originally formulated and proved in \cite[(3.11), (3.12)]{BM2015} (the latter without the $q^{\theta}$ factor) under the Deligne's bound for $\lambda_j(n)$, $j=1,2$. We now prove \eqref{eq3.11'} and \eqref{eq3.12'} independently of the Ramanujan--Petersson conjecture.

The sum $S_{N,M,d,q}$ is related to the other two sums above (which are in turn treated by the shifted convolution sum methods) in \cite[(3.10)]{BM2015}. Using \eqref{toward-RPC} in place of Deligne's bound in \cite[(3.10)]{BM2015} introduces an additional $(f_1f_2)^{\theta+\varepsilon}$ factor in the final expression:
\begin{equation}
\label{4310-theta}
S_{N, M, d, q}\ll \frac{d }{(NM)^{\frac12}} \sum_{\substack{g_1\mid f_1 \mid q\\ g_2\mid f_2 \mid q}}   (f_1f_2)^{\theta + \varepsilon}\Bigl|   \mathcal{S}(f_1g_1, f_2g_2, d, N, M)\Bigr|. 
\end{equation}

We begin by proving \eqref{eq3.11'}. Crucial in deriving its precedent, \cite[(3.11)]{BM2015}, is the upper bound on $\mathcal{S}(\ell_1,\ell_2,d,N,M)$ for $N\geq 20M$ in \cite[Proposition 8]{BM2015}, whose proof we note is entirely independent of the Ramanujan--Petersson conjecture. Indeed, up to the intermediate bound \cite[(8.17)]{BM2015},
\[ \mathcal{S}(\ell_1,\ell_2,d,N,M)\ll (dN)^{\varepsilon}  (\ell_1\ell_2,d)^{1/2} \B(\frac{N}{d^{\frac12}} +\frac{N^{\frac54}M^{\frac14} }{d} + \frac{N^{\frac34}M^{\frac14} }{d^{\frac14}} + \frac{NM^{\frac12} }{d^{\frac34}}\B), \]
the proof proceeds by shifted convolution sum methods and the Hecke eigenvalues $\lambda_j(n)$ are only estimated on average using the bounds of Rankin--Selberg and Wilton. To obtain \cite[Proposition 8]{BM2015}, the extraneous factor $(\ell_1\ell_2,d)^{1/2}$ must be removed. This is accomplished in the argument below \cite[(8.17)]{BM2015}, which uses the same decompositions as our \eqref{all-common-factors} and \eqref{after-Hecke} below, where $(\ell_1'\ell_2',d)=1$, whence $(\ell_1'g\ell_2'h,d')=(gh,d')\mid gh$, and using only the trivial bounds $\lambda_j(n)\ll n^{1/2}$ suffices to complete the proof of \cite[Proposition 8]{BM2015} because the combined contribution of each term $N^uM^v/d^w$ in the above bound is seen to be
\begin{equation}
\label{a-bit-more-care}
\ll (gh)^{1/2}\Big(\frac{\delta_1\delta_2}{gh}\Big)^{\frac12}\frac{(N/\delta\delta_1\delta_2\tilde{\ell})^u(M/\delta\delta_1\delta_2\tilde{\ell})^v}{(d/\delta\delta_1\delta_2)^w}
\ll\frac{(\delta_1\delta_2)^{\frac12}}{(\delta\delta_1\delta_2)^{u+v-w}}\frac1{\tilde{\ell}^{u+v}}\frac{N^uM^v}{d^w}\ll\frac{N^uM^v}{d^w}
\end{equation}
in light of $u+v\geqslant w+\frac12$.

Now, \cite[Proposition 8]{BM2015} can be further refined: namely, under identical notations, we have that, for $N\geq 20M$ and still without any recourse to the Ramanujan--Petersson conjecture,
\begin{align}\label{eqpro8}
\mathcal{S}(\ell_1, \ell_2, d, N, M) \ll  (dN)^{\varepsilon}   \B(\frac{N}{d^{\frac12}} + \frac{N^{\frac54}M^{\frac14} }{d(\ell_1\ell_2)^{\frac14}} + \frac{N^{\frac34}M^{\frac14} }{d^{\frac14}} + \frac{NM^{\frac12} }{d^{\frac34}}\B).
\end{align}
Indeed, this follows by simply retaining the factor $(\ell_1\ell_2)^{\frac14}$ between \cite[(8.13)]{BM2015} and \cite[(8.17)]{BM2015}.

With the refined estimate \eqref{eqpro8} at our disposal, we now return to \eqref{4310-theta} and set $\ell_1=f_1g_1$, $\ell_2=f_2g_2$.
We again apply the decomposition as in \cite[Section 8.5]{BM2015}
\begin{equation}
\label{all-common-factors}
\ell_1 = \ell_1'\tilde{\ell} \delta \delta_1, \quad  \ell_2 = \ell_2'\tilde{\ell} \delta \delta_2, \quad d = d' \delta\delta_1\delta_2,
\end{equation}
where $\delta = (d, \ell_1, \ell_2)$, $\delta_1 = (d, \ell_1)/\delta$, $\delta_2 = (d, \ell_2)/\delta$, $\tilde{\ell} = (\ell_1, \ell_2)/\delta$.
Using the Hecke relation as in \cite[Section 8.5]{BM2015} and \eqref{toward-RPC} yields
\begin{equation}
\label{after-Hecke}
S_{N, M, d, q}\ll \frac{d }{(NM)^{\frac12}} \sum_{\substack{g_1\mid f_1 \mid q\\ g_2\mid f_2 \mid q}}   (f_1f_2\delta_1\delta_2)^{\theta + \varepsilon}\sum_{g\mid \delta_2}\sum_{h\mid \delta_1}
\Bigl|   \mathcal{S}\l(\ell'_1g, \ell'_2h, d', \frac{N}{\delta\delta_1\delta_2\tilde{\ell}}, \frac{M}{\delta\delta_1\delta_2\tilde{\ell}}\r)\Bigr|.
\end{equation}
Estimating the innermost term $\mathcal{S}$ using \eqref{eqpro8} and handling the resulting terms as in \eqref{a-bit-more-care}, we obtain
\begin{align*}
S_{N, M, d, q}
& \ll (qN)^{\varepsilon} \sum_{f_1, f_2 \mid q} \frac{(f'_1 f'_2)^{\theta } }{  (\delta\delta_1\delta_2)^{\frac12-2\theta} \tilde{\ell}^{1-2\theta}} \B( \frac{(dN)^{\frac12}}{M^{\frac12}} + \frac{N^{\frac34}  }{ (\ell_1'\ell_2')^{\frac14} M^{\frac14}} + d^{\frac34}  \frac{N^{\frac14}}{M^{\frac14} } + d^{\frac14} N^{\frac12} \B),
\end{align*}
where $f_j'=f_j/(f_j,\tilde{\ell} \delta\delta_j )$ for $j=1, 2$. It is easy to see that $f_j'\mid \ell_j'$ and $f_j'\mid q/d$, which yield $(f_1', f_2')=1$ and thus $(f_1'f_2')^{\theta} \leq \min\{(q/d)^{\theta}, (\ell_1'\ell_2')^{\theta}\}$.
Using that $\theta\leq 1/4$, dropping the denominator $(\delta\delta_1\delta_2)^{1/2-2\theta} \tilde{\ell}^{1-2\theta}$, and using, for every $u\geq\frac14$, $\sum_{d\mid q}(q/d)^{\theta}d^u\ll q^{u+\ve}$,
we conclude \eqref{eq3.11'}.

We now turn our attention to proving \eqref{eq3.12'}. The bound \cite[(3.12)]{BM2015} is a trivial estimate of \cite[(3.10)]{BM2015} using the uniform individual bound (quoted there as \cite[(3.9)]{BM2015})
\begin{equation}
\label{Blo04-bound}
\mathcal{D}(\ell_1,\ell_2,h,N,M)\ll (NMq)^{\ve}(N+M)^{\frac12+\theta},
\end{equation}
which is established in \cite[Theorem 1.3]{Blo2004} independently of the Ramanujan--Petersson conjecture. Let $\ell_1, \ell_2, h\in \mathbb{N}$ with $\delta=(\ell_1, \ell_2)$. Observing from the definition that
\begin{equation}\label{eqDdelta}
\mathcal{D}(\ell_1, \ell_2, h, N, M)=\mathcal{D}\l(\frac{\ell_1}{\delta}, \frac{\ell_2}{\delta}, \frac{h}{\delta}, \frac{N}{\delta}, \frac{M}{\delta}\r),
\end{equation}
we refine \eqref{Blo04-bound} to
\begin{equation*}
\mathcal{D}(\ell_1, \ell_2, h, N, M)\ll (NMq)^\ve\l(\frac{N+M}{\delta}\right)^{\frac12+\theta}.
\end{equation*}
Applying this to \eqref{4310-theta} yields
\[
S_{N, M, d, q}\ll\frac{d}{(NM)^{\frac12}}q^\ve\frac{N}{d}\sum_{g_1\mid f_1\mid q}\sum_{g_2\mid f_2\mid q}(f_1f_2)^\theta\l(\frac{N+M}{(f_1g_1,f_2g_2)}\right)^{\frac12+\theta}\ll \frac{q^{\theta+\ve}N^{1+\theta}}{M^{\frac12}}.
\]
This proves \eqref{eq3.12'}, and, as already commented, \eqref{eq3.11'} and \eqref{eq3.12'} combined imply the statement of the lemma.
\end{proof}

For unbalanced terms, we assume $N\ge q^{\frac32-\delta}$ for some small $\delta>0$. Applying the Voronoi summation formula to the $n$-sum transform the problem to evaluating bilinear sums as in Theorem \ref{thmmain}, where both variables have lengths comparable to $q^{\frac12}$.

Our next Lemma provides a requisite Voronoi formula, available in \cite[Lemma 2.2]{FGKM2014} or \cite[Lemma 2.3]{BFK+17a}.
\begin{lemma}[Voronoi formula]\label{lemV}
Let $q\in\mathbb{N}$, $c\in\mathbb{Z}$ with $(c,q)=1$, and let $V$ be a smooth compactly supported function.  Then
\begin{align}
\sum_{n}\lambda(n)V\l(\frac{n}{N}\r)
e\l(\frac{cn}q\r)=\frac{N}{q}\sum_{\pm}\sum_{n}\lambda(n)\mathring{V}_{\pm}\l(\frac {nN}{q^2}\r)e \B(\pm\frac{\ol{c}n}{q}\B),
\end{align}
where the Hankel-type transforms $\mathring{V}_\pm:(0,\infty)\rightarrow\mathbb{C}$ are given by
\[
\mathring{V}_\pm(y)=\int_0^{\infty}V(x)\mJ_\pm\l(4\pi\ssqrt{xy}\r)\d x
\]
with the Bessel kernels satisfying
\begin{align}\label{eqJ1}
\mJ_{+}(x)=2\pi i^{k}J_{k-1}(x), \ \ \ \ \mJ_{-}(x)=0
\end{align}
for $f$ a cuspidal holomorphic newform of weight $k$, and
\begin{align}\label{eqJ2}
\mJ_+(x)=\frac{\pi i}{\sinh(\pi t)}(J_{2i\kappa}(x)-J_{-2i\kappa}(x)),\quad \mJ_-(x)=4\cosh(\pi \kappa)K_{2i\kappa}(x)
\end{align}
for $f$ a cuspidal Maa{\ss} newform with spectral parameter $\kappa$.
\end{lemma}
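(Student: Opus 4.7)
The plan is to derive the Voronoi formula from the functional equation of the twisted $L$-functions $L(s, f\otimes\chi)$, which is the classical approach (going back to work of Meurman, Duke--Friedlander--Iwaniec, and others; for this specific packaging see the references cited after the statement). First, since $V$ is smooth and compactly supported its Mellin transform $\tilde V$ is entire and decays rapidly in vertical strips, so for $\sigma$ sufficiently large Mellin inversion yields
\[
\sum_n\lambda(n) V\l(\frac n N\r) e\l(\frac{cn}q\r) = \frac{1}{2\pi i}\int_{(\sigma)} \tilde V(s) N^s D(s)\, \d s,
\]
where $D(s) = \sum_{n\geq 1}\lambda(n) e(cn/q) n^{-s}$ converges absolutely by the trivial Hecke bound.

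Next, I would reduce $D(s)$ to a finite combination of twisted $L$-functions. For $(n,q)=1$ one uses the Gauss-sum expansion
\[
e(cn/q) = \frac{1}{\vp(q)}\sum_{\chi\ppmod q}\tau(\chi)\ol\chi(c)\ol\chi(n);
\]
for $(n,q)>1$ one decomposes $n = n_1 n_2$ with $n_2$ supported on primes dividing $q$ and expands the additive character at the reduced modulus $q/(n,q)$, using Hecke multiplicativity of $\lambda$ at good primes together with the explicit level-$1$ local Euler factors at primes dividing $q$ to reassemble the pieces cleanly. The outcome is an identity of the form $D(s)=\sum_{q'\mid q}(\text{local factor})(s)\sum_{\chi\text{ prim}\ppmod{q'}}c_\chi(c) L(s, f\otimes\chi)$. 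I would then apply the functional equation $\Lambda(s, f\otimes\chi) = \epsilon(f,\chi)\Lambda(1-s, f\otimes\ol\chi)$ for each primitive $\chi\ppmod{q'}$ and shift the contour to $\Re(s)=-\sigma$, encountering no poles since $f$ is cuspidal and the characters are nontrivial.

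Finally, after applying the functional equation and repackaging the resulting sum over $\ol\chi$ via the reverse Gauss-sum identity to an additive twist at $\pm\ol c/q$, the integrand on the shifted contour becomes $\tilde V(s)$ times a ratio of Gamma factors, and evaluating the Mellin--Barnes integrals against $x^{-s}$ after interchanging with the integral defining $V$ produces exactly the Hankel-type transforms $\mathring V_\pm$. The specific Bessel kernels come out as stated: for holomorphic $f$ of weight $k$ the Gamma quotient $\Gamma(s+(k-1)/2)/\Gamma(1-s+(k-1)/2)$ inverts to $2\pi i^k J_{k-1}(4\pi\ssqrt{xy})$ and the $-$ kernel vanishes because the $+$ and $-$ contributions coincide, while for Maass $f$ the symmetric factor $\Gamma(s+i\kappa)\Gamma(s-i\kappa)$ decomposes into the $J_{\pm 2i\kappa}$ and $K_{2i\kappa}$ pieces via classical Mellin--Barnes identities. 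The main technical obstacle lies in the bookkeeping of the $(n,q)>1$ contributions: the naive Gauss-sum expansion breaks down there, and one must verify that the local Hecke recursions at primes $p\mid q$ combine with the Gauss sums on both sides of the functional equation so that the final dual sum really runs over all $n\geq 1$ with the clean twist $e(\pm\ol c n/q)$ and the same Hecke eigenvalues $\lambda(n)$. Since the formula is entirely classical, in practice one simply invokes \cite[Lemma 2.2]{FGKM2014} or \cite[Lemma 2.3]{BFK+17a} as the authors do.
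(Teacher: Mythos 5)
The paper does not prove Lemma~\ref{lemV}: it is quoted directly from \cite[Lemma~2.2]{FGKM2014} and \cite[Lemma~2.3]{BFK+17a}, as the sentence immediately preceding the statement makes clear, and as you yourself note at the end of your sketch. So strictly speaking there is no ``paper's own proof'' to compare against --- you have correctly identified what the paper actually does.

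That said, your sketch diverges from how the cited references (and the underlying sources such as Kowalski--Michel--VanderKam) actually establish the additively twisted Voronoi formula, and the difference is worth flagging. You propose to expand the additive character $e(cn/q)$ into multiplicative characters via Gauss sums and then invoke the functional equations of the family $L(s, f\otimes\chi)$; this is workable in principle but forces exactly the awkward bookkeeping at $(n,q)>1$ that you highlight as the ``main technical obstacle.'' The standard proof sidesteps this entirely: one works with the \emph{additively} twisted Dirichlet series $L(s,f,c/q)=\sum_n \lambda(n)e(cn/q)n^{-s}$ directly, derives its functional equation from the automorphy of $f$ under the level-$1$ matrix $\gamma=\begin{pmatrix}\bar c & (c\bar c-1)/q\\ q & -c\end{pmatrix}\in\mathrm{SL}_2(\mathbb{Z})$ (i.e., from the modular relation $f((\bar c+ i/(qy))/q)$ vs.\ $f((-c+iy)/q)$, or its spectral analogue for Maa\ss{} forms), and then runs the Mellin-inversion-plus-contour-shift argument once, uniformly in $n$. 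This yields the clean dual twist $e(\pm\bar c n/q)$ with no case distinction on $(n,q)$, and the Bessel kernels $\mJ_\pm$ drop out of a single Gamma-ratio computation. Your explanation of why $\mJ_-\equiv 0$ in the holomorphic case (``the $+$ and $-$ contributions coincide'') is also slightly off: the point is rather that the holomorphic Gamma factor $\Gamma(s+\tfrac{k-1}{2})$ produces a one-sided inverse Mellin transform giving only the $J_{k-1}$ kernel, so only one sign occurs on the dual side, whereas the symmetric Maa\ss{} factor $\Gamma(s\pm i\kappa)$ genuinely produces both a $J$- and a $K$-Bessel piece. None of this constitutes a gap in the paper, since the paper cites rather than proves; but if you were to flesh out your sketch into a full argument, you would save yourself substantial pain by routing through the automorphy relation rather than through multiplicative character decomposition.
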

The Hankel-type transforms $\mathring{V}_\pm(y)$  are well-known to be of Schwartz class.
Since the Selberg conjecture of exceptional eigenvalues is known for $\rm{SL}_2(\mathbb{Z})$, the bound (see \cite[Lemma 2.4]{BFK+17a})
\[
y^j\mathring{V}_\pm(y)\ll_{i,j}(1+y)^{\frac j2}\l(1+y^{\frac12}q^{-\ve}\r)^{-i}
\]
holds for all $i,j\ge0$ and $y>0$, when $f$ is a cuspidal newform (holomorphic or Maa{\ss}) of level one and $V^{(j)}(y)\ll_jq^{j\ve}$. In particular, we have that $\mathring{V}_{\pm}(y)\ll_j y^{-j}$ for $y\ge q^{\ve}$.

\subsection{ Proof of Theorem~\ref{thmL}}
\label{thmLproof-subsec}
In this section, we combine all the preparatory steps from \S\ref{reduction-subsec} and the input from Theorem~\ref{thmmain} to prove Theorem~\ref{thmL}.

\begin{proof}[Proof of Theorem \ref{thmL}]
Recall that $\eta=\frac1{216}$. As explained in \S\ref{reduction-subsec} (see the reduction to \eqref{mBpm}), the proof of  Theorem \ref{thmL} reduces to establishing the bound
\begin{align}\label{eqeta}
\mB^{\pm}(M,N)\ll q^{-\eta+\ve}.
\end{align}
We parametrize the exponents as
\[
M=q^u, \quad N=q^v,\quad v^*=2-v.
\]
From \eqref{eqtbSMN} and \eqref{eqbalancedS}, the desired bound \eqref{eqeta} follows immediately if either $u+v\le 2-2\eta-2\theta v$ or $v-u\le 1-4\eta$. Thus, we may restrict our consideration to the complementary regime
\begin{align*}
2-2\eta-2\theta v \le u+v\le 2+\ve, \quad 1-4\eta\le v-u,
\end{align*}
which yields the key constraints
\begin{align}\label{eqvu}
u\le 1/2+2\eta+\ve, \quad u-\ve\le v^*\le 1/2+\frac{3\theta+6\eta}{2+2\theta}, \quad u+v^*\le 1+4\eta.
\end{align}

After rearrangement in \eqref{mBpm}, we derive
\begin{align*}
\mB^{\pm}(M,N)=\frac{1}{\vp^{*}(q)}\sum_{d\mid q}\mu\l(\frac qd\r)\frac{\vp(d)}{(MN)^{\frac12}}\sum_{(m,q)=1}\lambda_1(m)W_1\l(\frac mM\r)\sum_{\substack{n\equiv \pm m\ppmod d\\ (n,q)=1}}\lambda_2(n)W_2\l(\frac nN\r).
\end{align*}
Given $(m,q)=1$ and $n\equiv\pm m\ (\bmod\ d)$, the condition $(n,q)=1$ is equivalent to $(n,q_d)=1$. Applying M\"{o}bius inversion and the Hecke relation yields
\begin{align*}
\sum_{\substack{n\equiv \pm m\ppmod d\\ (n,q)=1}}\lambda_2(n)W_2\l(\frac nN\r)
&=\sum_{f\mid q_d}\mu(f)\sum_{\substack{fn\equiv \pm m\ppmod d}}\lambda_2(fn)W_2\l(\frac {fn}N\r)\\
&=\sum_{g\mid f\mid q_d}\mu(f)\mu(g)\lambda_2\l(\frac fg\r)\sum_{\substack{n\equiv \pm \ol{fg}m\ppmod d}}\lambda_2(n)W_2\l(\frac {fgn}N\r).
\end{align*}
We detect the condition $n\equiv\pm \ol{fg}m\ (\bmod\ d)$ with primitive additive characters modulo $r$ for $r\mid d$, and then apply the Voronoi summation formula (Lemma~\ref{lemV}) to the $n$-sum, reducing it to the sum of four sums of the form
\[
\frac1 d\sum_{r\mid d}\frac{N}{fgr^{\frac12}}\sum_{n\ge1}\lambda_2(n)\mathring{W}_{2,\pm}\l(\frac{n}{N^*}\r)\K(\pm \ol{fg}mn;r),
\]
where $N^*=fgr^2/N\ll q^{v^*}$ since $fr\le q$, and the two $\pm$ signs vary independently of each other. Substituting this back into $\mB^{\pm}(M,N)$ gives
\begin{align*}
\mB^{\pm}(M,N)\ll\sum_{r\mid q}\sum_{f\mid g\mid q_r}\frac{q^\ve f^{\theta}r^{\frac12}}{q(fgMN^*)^{\frac12}}\B|\sum_{m\le M}\sum_{n\le N^*}\alpha_m\beta_n\K(\pm \ol{fg}mn;r)\B|
\end{align*}
with coefficients
\[
\alpha_m=\lambda_1(m)W_1\l(\frac mM\r),\quad \beta_n=\lambda_2(n)\mathring{W}_{2,\pm}\l(\frac{n}{N^*}\r).
\]
Trivial summation via Weil's bound for Kloosterman sums produces
\[
\mB^{\pm}(M,N)\ll (M/N)^{\frac12}f^{\theta}r^{\frac32}q^{-1+\ve}\le r^{\frac32-\theta}q^{-\frac32+\theta+2\eta+\ve},
\]
establishing \eqref{eqeta} unless
\begin{align}\label{eqconditionr}
r\ge q^{1-\frac{6}{3-2\theta}\eta}.
\end{align}
Given $\eta$'s small magnitude, \eqref{eqconditionr} combined with \eqref{eqvu} ensures the condition \eqref{conditionMN} in Theorem \ref{thmmain}. Thus, we derive from Theorem \ref{thmmain} and \eqref{eqvu} that
\begin{align*}
\mB^{\pm}(M,N)&\ll\sum_{r\mid q}\frac{r^{\frac12}(MN^*)^{\frac12}}{q}\l(M^{-\frac12}r^{\frac1{6}}+M^{-\frac3{25}}(N^*)^{-\frac3{10}}r^{\frac15}+(MN^*)^{-\frac3{16}}r^{\frac{11}{64}}\r)\\
&\ll q^{-\frac13+\frac12 v^*+\ve}+q^{-\frac3{10}+\frac{9}{50}u+\frac15(u+v^*)+\ve}+q^{-\frac{21}{64}+\frac5{16}(u+v^*)+\ve}\le q^{-\eta+\ve}
\end{align*}
for $v^*\le \frac23-2\eta$ with $\eta= \frac1{216}$. The remaining case $v^*>\frac23-2\eta$ can be effectively handled using the estimate \eqref{eqPoly}, which is deduced from the P\'olya--Vinogradov method.
\end{proof}

\begin{remark}
\label{PQ-remark}
It remains to justify the claims
\[ P(1),Q(1)=\exp(O_{\ve}((\log\log q)^{\ve})),\quad  P'(1)/P(1)=O_{\ve}((\log\log q)^{1+\ve}) \]
for the finite Euler products defined in \eqref{euler-products-eq}. Recall that
\[
\lambda(p^2)=\lambda(p)^2+1,\quad \lambda(p)\ll p^{\theta}
\]
for an absolute $\theta<1/4$ (where $\theta=7/64$ is admissible). Choosing any $A>1/(1-2\theta)$, we have that $\lambda(p^2)/p\ll 1/\log q$ for all $p\ge\log^Aq$, whence, keeping in mind the simple estimate $\omega(q)\ll\log q$,
\begin{align*}
\sum_{p\mid q}\frac{\lambda(p^2)}{p}&\ll \sum_{p\le \log^Aq}\frac{\lambda(p)^2+1}{p}.
\end{align*}
Taking a large integer $r$ and rewriting every product $p_1\dots p_r=nm$ with $n$ square-free, $m$ square-full, and $(n,m)=1$, we have that
\begin{align*}
\Bigg(\sum_{p\le \log^Aq}\frac{\lambda(p)^2+1}{p}\Bigg)^r
&=\mathop{\sum\cdots\sum}_{p_1,\cdots,p_r\le \log^Aq}\frac{(\lambda(p_1)^2+1)\cdots(\lambda(p_r)^2+1)}{p_1\cdots p_r}\\
&\ll_r\sum_{n\le \log^{Ar}q}\frac{1*\lambda^2(n)}{n}\sum_{m ~\text{square-full}}\frac1{m^{1-2\theta}}\\
&\ll_r\log\log q\sum_{n\le \log^{Ar}q}\frac{\lambda(n)^2}{n}\ll_r(\log\log q)^2.
\end{align*}
Taking $r>2/\ve$ yields\footnote{We note that, in fact, $\theta<1/2$ suffices to reach the conclusions that follow, by instead separating out $s$-power-full numbers with a fixed $s>1/(1-2\theta)$ and then taking $r>2s/\ve$.}
\begin{align}\label{eqlp}
\sum_{p\le \log^Aq}\frac{\lambda(p)^2+1}{p}\ll_\ve (\log\log q)^{\ve},
\end{align}
hence
\begin{gather*}
P(1), Q(1)=\exp\Bigg(O\B(\sum_{p\mid q}\frac{\lambda(p^2)}p\B)+O(1)\Bigg)=\exp\big(O_{\ve}\big((\log\log q)^{\ve}\big)\big),\\
\frac{P'(1)}{P(1)}\ll \sum_{p\mid q}\frac{\lambda(p^2)\log p}{p}+O(1)\ll_{\ve}(\log\log q)^{1+\ve}.
\end{gather*}
\end{remark}


\begin{thebibliography}{99}
\bibitem {Blo2004}
V. Blomer,
Shifted convolution sums and subconvexity bounds for automorphic $L$-functions,
Int. Math. Res. Not (2004), 3905--3926.


\bibitem{BFK+17a}
V. Blomer, \'E. Fouvry, E. Kowalski, Ph. Michel, and D. Mili\'cevi\'c,
On moments of twisted $L$-functions,
Amer. J. Math. 139 (2017), 707--768.

\bibitem{BFKMMS}
V. Blomer, \'E. Fouvry, E. Kowalski, Ph. Michel, D. Mili\'cevi\'c, and W. Sawin,
The Second Moment Theory of Families of $L$-Functions --- The Case of Twisted Hecke $L$-Functions, Mem. Amer. Math. Soc. 282 (2023), no. 1394, v+148 pp.

\bibitem{BM2015}
V. Blomer and D. Mili\'{c}evi\'{c},
The second moment of twisted modular $L$-functions,
Geom. Funct. Anal. 25 (2015), 453--516.

\bibitem{Bru06}
F. Brumley,
Effective multiplicity one on $GL_n$ and narrow zero-free regions for Rankin-Selberg $L$-functions,
Amer. J. Math., 128 (2006), 1455--1474.

\bibitem{Del74}
P. Deligne,
La conjecure de Weil, I,
Inst. Hautes \'{E}tudes Sci. Publ. Math. 43 (1974), 273--307.

\bibitem{FGKM2014}
\'E. Fouvry, S. Ganguly, E. Kowalski, and Ph. Michel,
Gaussian distribution for the divisor function and Hecke eigenvalues in arithmetic progressions,
Comment. Math. Helv. 89 (2014), 979--1014.

\bibitem{FKM2015}
\'E. Fouvry, E. Kowalski, and Ph. Michel,
A study in sums of products, Phil. Trans. R. Soc. A. 373 (2015), no. 2040, 20140309, 26 pp.

\bibitem {FKM14}
\'E. Fouvry, E. Kowalski, and Ph. Michel,
Algebraic trace functions over the primes,
Duke Math. J. 163 (2014), 1683--1736.

\bibitem{FM98}
\'E. Fouvry and Ph. Michel,
Sur certaines sommes d'exponentielles sur les nombres premiers,
Ann. Sci. \'Ecole Norm. Sup. (4) 31 (1998), 93--130.

\bibitem{HL94} J. Hoffstein and P. Lockhart,
Coefficients of Maa\ss\  forms and the Siegel zero, with an appendix by D. Goldfeld, J. Hoffstein, and D. Lieman,
Ann. of Math. (2), 140 (1994), 161--181.

\bibitem{IK04} H. Iwaniec and E. Kowalski, Analytic Number Theory, Amer. Math. Soc. Colloq, Publ. 53, Amer. Math. Soc., Providence, RI, 2004.

\bibitem{Kim03}
H. H. Kim,
Functoriality for the exterior square of $GL_4$ and the symmetric fourth of $GL_2$,
J. Amer. Math. Soc. 16 (2003), 139--183, with Appendix 1 by Dinakar Ramakrishnan and Appendix 2 by Kim and Peter Sarnak.

\bibitem{KMS2017}
E. Kowalski, Ph. Michel, and W. Sawin,
Bilinear forms with Kloosterman sums and applications,
Ann. of Math. (2) 186 (2017), 413--500.

\bibitem{Pascadi2025}
A. Pascadi, Non-abelian amplification and bilinear forms with Kloosterman sums,
arXiv e-prints, November 2025.

\bibitem{RW03} D. Ramakrishnan and S. Wang,
On the exceptional zeros of Rankin-Selberg $L$-functions,
Compos. Math., 135 (2003), 211--244.

\bibitem{Shkredov2018}
I.\ D.\ Shkredov,
On asymptotic formulae in some sum-product questions.
Trans. Moscow Math. Soc. 79 (2018), 231--281.

\bibitem{Shkredov2021}
I.\ D.\ Shkredov,
Modular hyperbolas and bilinear forms of Kloosterman sums,
J. Number Theory 220 (2021), 182--211.

\bibitem{You11}
M. P. Young,
The fourth moment of Dirichlet $L$-functions,
Ann. of Math. (2) 173 (2011), 1--50.


\end{thebibliography}
\end{document}